\newtheorem{thm}[equation]{Theorem}
\newtheorem{cor}[equation]{Corollary}
\newtheorem{lemma}[equation]{Lemma}
\newtheorem{prop}[equation]{Proposition}
\newtheorem*{conj*}{Conjecture}
\newtheorem*{thmA}{Theorem A}
\newtheorem*{thmB}{Theorem B}
\newtheorem*{thmC}{Theorem C}
\newtheorem{remark}[equation]{Remark}
\newtheorem{exam}[equation]{Example}
\newtheorem{exams}[equation]{Examples}
\numberwithin{equation}{section}
\renewcommand{\ker}{\mathsf{ker}}
\renewcommand{\gcd}{\mathsf{gcd}}
\newcommand{\FF}{\mathbb{F}}  
\newcommand{\ZZ}{\mathbb{Z}}  
\newcommand{\CC}{\mathbb{C}}  
\newcommand{\KK}{\mathbb{K}} 
\newcommand{\A}{\mathsf{A}} 
\newcommand{\BB}{\mathsf{B}}  
\newcommand{\R}{\mathsf{R}} 
\newcommand{\D}{\mathsf{D}}
\newcommand{\V}{\mathsf{V}} 
\newcommand{\W}{\mathsf{W}}
\newcommand{\pr}{\mathsf{u}}
\newcommand\ot{\otimes}
\newcommand\y{\hat y}
\newcommand\Hom{\mathsf{Hom}}
\newcommand\chara{\mathsf {char}}
\newcommand\dd{\mathsf{d}}
\newcommand\s{\mathsf{s}}
\newcommand\rD{\phi}
\newcommand\rr{\mathsf{r}}
\newcommand{\gb}[1]{\left[ #1\right]}
\newcommand{\barr}[1]{\overline{#1}}
\newcommand{\eval}[2]{\left. #1 \right|_{#2}}
\newcommand\inder{ \mathsf {Inder_\FF}}
\newcommand\der{ \mathsf {Der_\FF}}
\newcommand\hoch{\mathsf{HH}}
\newcommand\ad{\mathsf {ad}}
\newcommand\im{\mathsf{im}}
\newcommand{\cent}[1]{\mathsf{Z}(\A_{#1})}
\newcommand\modd[1]{\ (\mathsf{mod} \, #1)}
\newcommand\degg{\, \mathsf{deg} \,}
\newcommand\spann{\, \mathsf{span}}
\begin{document}

\title[Lie structure on the Hochschild cohomology of $\A_h$]{Lie structure on the Hochschild cohomology of a family of subalgebras of the Weyl algebra} 

\date{} 

\author{Samuel A.\ Lopes$^*$ and Andrea Solotar}

\thanks{$^*$ The author was partially supported by CMUP (UID/MAT/00144/2019), which is funded by FCT with national (MCTES) and European structural funds through the programs FEDER, under the partnership agreement PT2020.\newline  \newline
\textbf{MSC Numbers (2010)}: 16E40, 16W25, 16S32, 16S36 \hfill \newline
\textbf{Keywords}: Weyl algebra, Ore extension, Hochschild cohomology, Gerstenhaber bracket, Witt algebra}

\address{CMUP, Faculdade de Ci\^encias\\
Universidade do Porto, Rua do Campo Alegre 687 \\
4169-007 Porto, Portugal}
\email{slopes@fc.up.pt}
 
\address{IMAS and Dto.\ de Matem\'atica, Facultad de Ciencias Exactas y Naturales\\
Universidad de Buenos Aires, Ciudad Universitaria, Pabell\'on 1 \\
(1428) Buenos Aires, Argentina}
\email{asolotar@dm.uba.ar}

\vspace{-.25 truein}  
\begin{abstract} 
For each nonzero $h\in \FF[x]$, where $\FF$ is a field, let $\A_h$ be the unital associative algebra generated by elements $x,y$,  satisfying the relation $yx-xy = h$. This gives a parametric family of subalgebras of the Weyl algebra $\A_1$, containing many well-known algebras which have previously been studied independently. In this paper, we give a full description the Hochschild cohomology $\hoch^\bullet(\A_h)$ over a field of arbitrary characteristic. In case $\FF$ has positive characteristic, the center $\cent{h}$ of $\A_h$ is nontrivial and we describe $\hoch^\bullet(\A_h)$ as a module over $\cent{h}$. The most interesting results occur when $\FF$ has characteristic $0$. In this case, we describe $\hoch^\bullet(\A_h)$ as a module over the Lie algebra $\hoch^1(\A_h)$ and find that this action is closely related to the intermediate series modules over the Virasoro algebra. We also determine when $\hoch^\bullet(\A_h)$ is a semisimple $\hoch^1(\A)$-module.  
\end{abstract}

\maketitle

\section{Introduction}\label{S:intro}
Given a field $\FF$ and a nonzero polynomial $h(x) \in \FF[x]$, let $\A_h$ be the unital associative $\FF$-algebra with two generators $x$ and $\hat{y}$, subject to the relation $\y x-x\y =h$. The aim of this article is to describe the structure---given by the Gerstenhaber bracket---of the Hochschild cohomology spaces $\hoch^\bullet(\A_h)$  as Lie modules over $\hoch^1(\A_h)$.

The family $\A_h$ parametrizes many well-known algebras, which we study simultaneously. For $h=1$, we retrieve the first Weyl algebra $\A_1$. Other particular cases have attracted attention, such as
$\A_x$, which is the universal enveloping algebra of the two-dimensional non-abelian Lie algebra and $\A_{x^2}$, known as the \textit{Jordan plane}, which is a Nichols algebra of non diagonal type. More generally, taking $h=x^n$ with $n\geq 3$ and setting $x$ in degree $1$ and $\y$ in degree $n-1$ then, as observed by Stephenson \cite{dS96}, $\A_{x^n}$ is Artin-Schelter regular of global dimension two, although it does not admit any regrading so that it becomes generated in degree one.

It is well known that the Weyl algebra is the algebra of differential operators over the one dimensional affine space, where $x$ acts by multiplication and $y$ corresponds to the usual derivative $\frac{\partial}{\partial x}$. Of course, replacing this last action by 
$h\cdot\frac{\partial}{\partial x}$ for any fixed polynomial $h(x) \in \FF[x]$ also corresponds to a derivation. If $h=0$, the derivation would annihilate everywhere, so we will not consider this case.

There is an embedding of $\A_h$ in $\A_1$ given by 
$x\mapsto x$, $\y\mapsto yh$, as in \cite[Lem.\ 3.1]{BLO15tams}. We will thus henceforth take $\y=yh$ and consider $\A_h$ as the unital subalgebra of the Weyl algebra $\A_1$ generated by $x$ and $\y=yh$, where $[y, x]=1$ and $[\y, x]=h$.

The paper is organized as follows. In Section \ref{S:comm} we prove a few technical lemmas about commutators, while in Section \ref{S:res} we construct the minimal resolution of $\A_h$ as an $\A_h$-bimodule. In particular, this allows us to give an explicit description of $\hoch^2(\A_h)$ in positive characteristic. The aim of Section \ref{S:hom} is to complete the construction of a contracting homotopy for the minimal resolution, and in Section \ref{S:Gerst} we recall the method developed by Su\'arez-\'Alvarez \cite{mSA17} to compute the brackets $[\hoch^1(A), \hoch^n(A)]$ for any associative unital algebra $A$. This allows us to obtain in Section \ref{S:Gerst:char0} the main results of this article: the description, in characteristic zero, of the Lie structure of $\hoch^\bullet(\A_h)$ as an $\hoch^1(\A_h)$-Lie module.

Below we summarize, in simplified from, the main results of the paper.

\begin{thmA}[Theorem~\ref{T:free:char:p}]
Assume  $\chara(\FF)=p>0$ and let $\mathsf{Z}(\A_h)$ denote the center of $\A_h$. Then $\hoch^2(\A_h)$ is a free $\mathsf{Z}(\A_h)$-module if and only if $\gcd(h, h')=1$. In this case, $\hoch^2(\A_h)$ has rank one over $\mathsf{Z}(\A_h)$ and, moreover, $\hoch^\bullet(\A_h)$ is a free $\mathsf{Z}(\A_h)$-module.
\end{thmA}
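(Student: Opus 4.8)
The plan is to deduce all three assertions from the minimal bimodule resolution constructed in Section~\ref{S:res} together with the structure of $\cent{h}$. That resolution has length two, so $\hoch^n(\A_h)=0$ for $n\ge 3$; applying $\Hom_{\A_h^e}(-,\A_h)$ gives a complex $\A_h\xrightarrow{d^0}\A_h^{2}\xrightarrow{d^1}\A_h$ with $\hoch^0(\A_h)=\cent{h}$, $\hoch^1(\A_h)=\ker d^1/\im d^0$ and $\hoch^2(\A_h)=\A_h/\im d^1$, where $d^0(a)=([x,a],[\y,a])$ and $d^1(a,b)=[\y,a]-D(a)-[x,b]$, the operator $D$ coming from the noncommutative derivative of $h$ and satisfying $D(g)=h'g$ for $g\in\FF[x]$. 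Since a finite direct sum of free $\cent{h}$-modules is free and $\hoch^0(\A_h)=\cent{h}$ is free of rank one, it is enough to treat $\hoch^1(\A_h)$ and $\hoch^2(\A_h)$.

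For $\hoch^2(\A_h)$ I would filter $\A_h=\bigoplus_{n\ge 0}\FF[x]\y^n$ by $\y$-degree (and $\A_h^2$ by the filtration giving the $d\y$-slot a shift by one), so that $d^1$ is filtered; the normal forms $[\y,g\y^n]=g'h\y^n$, $D(g\y^n)=h'g\y^n+(\text{lower})$ and $[x,\tilde g\y^{n+1}]=-(n+1)\tilde g h\y^n+(\text{lower})$ show that its associated graded in $\y$-degree $n$ is the $\FF[x^p]$-linear map $(g,\tilde g)\mapsto hg'-h'g+(n+1)h\tilde g$, with cokernel $\FF[x]/(\gcd(h,h'))$ when $p\nmid n+1$ and $\FF[x]/\mathcal{W}_h$, $\mathcal{W}_h:=\{hg'-h'g:g\in\FF[x]\}$, when $p\mid n+1$. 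These cokernels depend only on $n\bmod p$, and multiplication by a central element of the form $\y^p+(\text{lower order})$ identifies the pieces in $\y$-degrees $n$ and $n+p$; hence $\mathrm{gr}\,\hoch^2(\A_h)$ is, as a $\cent{h}$-module, a sum of $p-1$ copies of $\FF[x]/(\gcd(h,h'))\otimes_{\FF[x^p]}\cent{h}$ and one copy of $\FF[x]/\mathcal{W}_h\otimes_{\FF[x^p]}\cent{h}$. A finite check (the kernel of $g\mapsto hg'-h'g$ is $h\FF[x^p]$ exactly when $\gcd(h,h')=1$, and then $\FF[x]/\mathcal{W}_h$ is free of rank one over $\FF[x^p]$) shows that if $\gcd(h,h')=1$ then $\mathrm{gr}\,\hoch^2(\A_h)\cong\cent{h}$ is free of rank one, whence $\hoch^2(\A_h)\cong\cent{h}$ by lifting a generator. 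Conversely, if $d:=\gcd(h,h')$ is not a unit, then $d\A_h$ is a two-sided ideal (because $d\mid h$) containing $[\y,\A_h]$, $D(\A_h)$ and $[x,\A_h]$, hence containing $\im d^1$; so the class $\overline 1$ of $1$ is nonzero in $\A_h/d\A_h$ and thus in $\hoch^2(\A_h)$, while any nonzero $z_0\in\FF[x^p]\cap d\FF[x]$ — which exists since $\FF[x]/d\FF[x]$ is $\FF[x^p]$-torsion — equals $d^1(g,\tilde g\y)=g'h-h'g+\tilde g h$ for suitable $g,\tilde g$, so $z_0\cdot\overline 1=0$. Thus $\hoch^2(\A_h)$ has nonzero torsion over the domain $\cent{h}$ and is not free. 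This proves the equivalence and the rank statement.

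Assume now $\gcd(h,h')=1$; it remains to show $\hoch^1(\A_h)$ is free over $\cent{h}$. Using that $\A_h$ is a free $\cent{h}$-module with $1$ among a basis, one obtains successively that $\im d^0\cong\A_h/\cent{h}$ is free; that $0\to\im d^1\to\A_h\to\hoch^2(\A_h)\to 0$ splits because $\hoch^2(\A_h)\cong\cent{h}$ is free, so $\im d^1$ is projective, hence free as $\cent{h}$ is a polynomial ring in two variables; and that $0\to\ker d^1\to\A_h^2\to\im d^1\to 0$ splits, so $\ker d^1$ is free. Then $0\to\im d^0\to\ker d^1\to\hoch^1(\A_h)\to 0$ has free outer terms, and, using the explicit description of $\hoch^1(\A_h)$ worked out in the paper — equivalently, producing two outer derivations of $\A_h$ whose classes split this sequence — one concludes that $\hoch^1(\A_h)$ is free, necessarily of rank two by the Euler characteristic of the complex. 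Combining, $\hoch^\bullet(\A_h)\cong\cent{h}\oplus\cent{h}^2\oplus\cent{h}$ is a free $\cent{h}$-module.

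The main obstacle is the $\cent{h}$-module analysis of $\hoch^2(\A_h)$ through the filtration: one must track the lower-order corrections in the normal forms precisely and, in particular, determine $\FF[x]/\mathcal{W}_h$ exactly, so that freeness is pinned to $\gcd(h,h')=1$ and the torsion is located on the common zero locus of $h$ and $h'$. A secondary difficulty is the freeness of $\hoch^1(\A_h)$: over the two-dimensional ring $\cent{h}$, torsion-freeness alone does not imply freeness, so the last short exact sequence must be split explicitly, e.g.\ by exhibiting the two outer derivations.
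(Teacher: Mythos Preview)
Your filtration approach and the paper's explicit decomposition (Theorem~\ref{T:res:HH2charp}) ultimately converge on the same technical question: whether $\FF[x]/\mathcal{W}_h$ (the paper's $\FF[x]/\mathcal{K}$, with $\mathcal{K}=\im\varkappa$ for $\varkappa(g)=g'h-h'g$) is free of rank one over $\FF[x^p]$ when $\gcd(h,h')=1$. Your necessity argument is clean and arguably more direct than the paper's; your treatment of $\hoch^1(\A_h)$ via short exact sequences and Quillen--Suslin is fine and parallels the paper's appeal to prior work. But there are two genuine gaps.

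The main one is that you dismiss the freeness of $\FF[x]/\mathcal{W}_h$ as ``a finite check''. It is not. Knowing $\ker\varkappa=h\FF[x^p]$ tells you $\mathcal{K}\cong\FF[x]/h\FF[x^p]$ is free of rank $p-1$, but a free submodule of $\FF[x]$ of rank $p-1$ need not be a direct summand; you must show $\FF[x]/\mathcal{K}$ is torsion-free over $\FF[x^p]$. This is exactly the content of the paper's Claim and Subclaims in the proof of Theorem~\ref{T:free:char:p}: given $\omega\in\FF[x^p]$ and $f\in\FF[x]$ with $\omega f=\varkappa(g)$, one must produce $q\in\FF[x]$ and $r\in\FF[x^p]$ with $g=\omega q+rh$, and this requires a nontrivial divisibility argument using $\gcd(h,h')=1$. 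Without it, your sufficiency direction is incomplete.

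A secondary gap is the passage from $\mathrm{coker}(\mathrm{gr}\,d^1)$ to $\mathrm{gr}(\hoch^2(\A_h))$. In general one only has a surjection $\mathrm{coker}(\mathrm{gr}\,d^1)\twoheadrightarrow\mathrm{gr}(\mathrm{coker}\,d^1)$; equality requires that $d^1$ be strict with respect to the filtration, i.e.\ that $\im d^1\cap F_n\A_h=d^1(F_n(\A_h\oplus\A_h))$. You have not verified this, and the ``lower order'' corrections you suppress are precisely where strictness could fail. The paper sidesteps this entirely by computing $\im\rD_2$ exactly in Theorem~\ref{T:res:HH2charp}, using the decomposition $\A_h=\mathcal{I}\oplus\mathcal{J}$ along powers of $h^jy^j$ rather than a filtration by $\y$-degree; this gives the cokernel on the nose without any associated-graded comparison.
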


In positive characteristic, an explicit description of $\hoch^2(\A_h)$ is given in Theorem~\ref{T:res:HH2charp}, although this is a bit involved. On the other hand, in characteristic zero, $\hoch^2(\A_h)$ can be presented as a space of polynomials.

\begin{thmB}[Corollary~\ref{C:res:HH2inchar0}]
Assume  $\chara(\FF)=0$. There are isomorphisms
\begin{equation*}
\hoch^2(\A_h) \cong \A_h/\gcd(h, h')\A_h\cong \D[\y],
\end{equation*}
where $\D=\left(\FF[x]/\gcd(h, h')\FF[x]\right)$.
In particular, $\hoch^2(\A_h)=0$ if and only if $h$ is a separable polynomial; otherwise, $\hoch^2(\A_h)$ is infinite dimensional. 
\end{thmB}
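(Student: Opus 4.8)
The plan is to establish the two isomorphisms and then read off the dimension statement. The key input is the explicit description of $\hoch^2(\A_h)$ in positive characteristic from Theorem~\ref{T:res:HH2charp}, but since we are now in characteristic zero, I expect the relevant computation to be a specialization: from the minimal bimodule resolution constructed in Section~\ref{S:res}, the space $\hoch^2(\A_h)$ is the cokernel of the map $d^2$ dualized, i.e.\ a quotient of $\A_h$ by the image of the relevant differential. First I would recall from Section~\ref{S:res} the precise form of the Hochschild complex obtained from the minimal resolution; since $\A_h$ has a length-two resolution (it has global dimension two, being an Ore extension of $\FF[x]$), the complex computing $\hoch^\bullet$ has the shape $\A_h \xrightarrow{D_1} \A_h \xrightarrow{D_2} \A_h$, where $D_1, D_2$ are $\FF$-linear maps built from multiplication/commutator operators involving $x$, $\y$, and $h$. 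The cohomology $\hoch^2(\A_h)$ is then $\A_h / \im(D_2)$.

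The main step is to identify $\im(D_2)$ with $\gcd(h,h')\A_h$. I would compute $D_2$ explicitly on a basis. Since $\A_h$ is free as a left (and right) $\FF[x]$-module with basis $\{\y^k : k \ge 0\}$, I would track how $D_2$ acts, using the commutator lemmas of Section~\ref{S:comm}: the operator $\ad(x)$ and $\ad(\y)$ on $\A_h$, together with the fact that $[\y, x] = h$ and $[y,x]=1$. The claim will reduce to showing that the image is exactly the two-sided (equivalently, since the quotient below turns out commutative, one-sided) ideal generated by $\gcd(h,h')$ — here $h' = \frac{dh}{dx}$ appears because differentiating $h$ is precisely the obstruction coming from the $\ad(x)$-action interacting with multiplication by $h$. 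Concretely, I expect $D_2$ to have the form $a \mapsto h\,\partial(a) - (\text{something})$ where $\partial$ is the inner-derivation-induced derivative, so that its image on $\FF[x]$ is $h\FF[x] + h'\FF[x] = \gcd(h,h')\FF[x]$ (using that $\FF[x]$ is a PID and $\gcd(h,h')$ generates $h\FF[x]+h'\FF[x]$), and then extending $\y$-degree by $\y$-degree gives $\im(D_2) = \gcd(h,h')\A_h$. This yields $\hoch^2(\A_h) \cong \A_h/\gcd(h,h')\A_h$.

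For the second isomorphism, I would use that $\A_h$ is a free right $\FF[x]$-module with basis $\{\y^k\}$ and that $\gcd(h,h') \in \FF[x]$, so that $\A_h/\gcd(h,h')\A_h$ is free as a right $\D$-module on the images of $\y^k$; identifying this with $\D[\y]$ requires checking that the left multiplication by $x$ descends compatibly, which follows since $\gcd(h,h')\A_h$ is a two-sided ideal (one must verify $x \cdot \gcd(h,h') \equiv \gcd(h,h')\cdot x$ modulo the ideal, which holds because $[x,\FF[x]]=0$, and similarly track $\y \cdot \gcd(h,h')$ using $[\y, g] = h g'$ for $g\in\FF[x]$ — and $h g' \in h\FF[x] \subseteq \gcd(h,h')\FF[x]$). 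Finally, the dimension dichotomy: $\hoch^2(\A_h)=0 \iff \gcd(h,h')\A_h = \A_h \iff \gcd(h,h')$ is a unit $\iff \gcd(h,h')=1 \iff h$ is separable (squarefree); and if $h$ is not separable then $\D = \FF[x]/\gcd(h,h')\FF[x]$ is a nonzero finite-dimensional algebra, so $\D[\y]$ is infinite dimensional over $\FF$.

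The main obstacle I anticipate is pinning down the exact form of $D_2$ from the minimal resolution and correctly identifying that its image is generated by $h$ and $h'$ rather than, say, just $h$ or some other combination — this is where the characteristic-zero hypothesis is essential (in characteristic $p$, $h'$ can vanish or behave degenerately, which is exactly why the positive-characteristic statement in Theorem~\ref{T:res:HH2charp} is more involved). Once $D_2$ is correctly identified, the rest is bookkeeping with the PID structure of $\FF[x]$ and freeness over it.
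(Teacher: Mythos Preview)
Your overall strategy matches the paper's, but there are two concrete errors in the setup and one misdiagnosis of where characteristic zero enters.

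First, the shape of the dual complex is wrong. Since the resolution has free bimodule terms of rank $1$, $2$ (from $\V=\FF x\oplus\FF\y$), and $1$, the Hochschild complex is
\[
0 \longrightarrow \A_h \xrightarrow{\ \rD_1\ } \A_h\oplus\A_h \xrightarrow{\ \rD_2\ } \A_h \longrightarrow 0,
\]
not $\A_h\to\A_h\to\A_h$. Consequently $\rD_2$ takes a \emph{pair} $(\alpha,\beta)$, and the paper writes it explicitly as $\rD_2(\alpha,\beta)=[\beta,x]+[\y,\alpha]-F_\alpha(h)$ where $F_\alpha(x^s)=\sum_{\ell=0}^{s-1}x^\ell\alpha x^{s-\ell-1}$. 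Your guessed form ``$a\mapsto h\,\partial(a)-(\text{something})$'' does not capture this.

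Second, the argument that $\im\rD_2=\gcd(h,h')\A_h$ does not proceed ``$\y$-degree by $\y$-degree'' from the case of $\FF[x]$. The paper splits $\rD_2=\rD_2^1\oplus\rD_2^2$ with $\rD_2^2(\beta)=[\beta,x]$ and $\rD_2^1(\alpha)=[\y,\alpha]-F_\alpha(h)$, then uses two ingredients: the lemma $F_\alpha(h)\equiv h'\alpha\ \modd{[x,\A_h]}$ (which holds in any characteristic and is proved by a direct commutator manipulation), and the key characteristic-zero input $[x,\A_h]=[\y,\A_h]=h\A_h$ from Proposition~\ref{P:general:facts:Ah}\,(f). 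From these, $\im\rD_2^2=h\A_h$ and $\rD_2^1(-\alpha)\equiv h'\alpha\ \modd{h\A_h}$, so $h\A_h+h'\A_h\subseteq\im\rD_2$; the reverse inclusion is the containment part, which holds in any characteristic.

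This last point is where your diagnosis of the role of $\chara(\FF)=0$ is off: the issue in positive characteristic is not primarily that $h'$ can degenerate, but that $[x,\A_h]\subsetneq h\A_h$ in general (compare Lemma~\ref{L:comm:adxpy}), so $\im\rD_2^2$ is strictly smaller than $h\A_h$ and the clean split fails. Once you fix the shape of the complex and invoke $[x,\A_h]=h\A_h$ rather than trying to induct on $\y$-degree, your plan is exactly the paper's proof. Your treatment of the second isomorphism and of the dichotomy is fine.
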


The Hochschild cohomology $\hoch^{\bullet}(\A_h)=\bigoplus_{n\geq 0}\hoch^n(\A_h)$ can be made into a Lie module for the Lie algebra $\hoch^{1}(\A_h)$ of outer derivations of $\A_h$, under the \textit{Gerstenhaber bracket}. By the Hochschild-Kostant-Rosenberg Theorem, under suitable assumptions, this bracket is the generalization to higher degrees of the Schouten-Nijenhuis bracket. In our setting this is especially interesting in case $\chara(\FF)=0$ and $\gcd(h, h')\neq 1$ as then the description of $\hoch^{1}(\A_h)$ is related to the Witt algebra and, as we shall see, the $\hoch^{1}(\A_h)$-Lie module structure of $\hoch^{2}(\A_h)$ can be described in terms of the representation theory of the Witt algebra.

\begin{thmC}[\textit{cf.}\ Theorem~\ref{T:Gerst:char0:main}]
Assume that $\chara(\FF)=0$ and $\gcd(h, h')\neq 1$. Let $m_h+1$ be the largest exponent occurring in the decomposition of $h$ in $\FF[x]$ into irreducible factors. The structure of $\hoch^2(\A_h)$ as a Lie module, under the Gerstenhaber bracket, for the Lie algebra $\hoch^1(\A_h)$ is as follows:
\begin{enumerate}[label=\textup{(\alph*)}]
\item There is a filtration of length $m_h$ by $\hoch^1(\A_h)$-submodules
\begin{equation*}
\hoch^2(\A_h)=P_0\supsetneq P_1\supsetneq \cdots \supsetneq P_{m_h-1}\supsetneq P_{m_h}=0. 
\end{equation*} 
such that each factor $P_i/P_{i+1}$ is semisimple.
\item The irreducible summands of each $P_i/P_{i+1}$ can be naturally seen as obtained from intermediate series modules for the Witt algebra, under a suitable finite field extension of $\FF$.
\item $\hoch^2(\A_h)$ has finite composition length, equal to the number of irreducible factors of $\gcd(h, h')$, counted with multiplicity. 
\item $\hoch^2(\A_h)$ is a semisimple $\hoch^1(\A_h)$-module if and only if $h$ is not divisible by the cube of any non-constant polynomial.
\end{enumerate}
\end{thmC}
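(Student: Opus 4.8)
Put $g=\gcd(h,h')$ and $f=h/g$, and write $h=\prod_i p_i^{e_i}$ for the decomposition into monic irreducibles, so that in characteristic zero $g=\prod_i p_i^{e_i-1}$, $f=\prod_i p_i$ is the squarefree radical of $h$, and $m_h=\max_i e_i-1$. By Corollary~\ref{C:res:HH2inchar0} we identify $\hoch^2(\A_h)$ with $\D[\y]$, $\D=\FF[x]/g\FF[x]$, and by Section~\ref{S:Gerst:char0} the Lie algebra $\hoch^1(\A_h)$ and its action on $\D[\y]$ are known explicitly; the feature we exploit is that a generator of $\hoch^1(\A_h)$ acts on $\D[\y]$ as the sum of an operator that merely shifts and rescales the $\y$-variable and of a polynomial differential operator in the $x$-variable whose top coefficient is divisible by $f$, so that the action respects the $f$-adic filtration of the $x$-coefficients. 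Granting this, the proof is module theory. First, by the Chinese Remainder Theorem $\D\cong\prod_i\D_i$ with $\D_i=\FF[x]/p_i^{e_i-1}\FF[x]$ local; the corresponding idempotents are killed by every $\FF$-derivation of $\FF[x]$, so $\hoch^2(\A_h)=\bigoplus_i\D_i[\y]$ as $\hoch^1(\A_h)$-modules and it suffices to treat one primary component, i.e.\ assume $h=p^e$, $\D=\FF[x]/p^{e-1}\FF[x]$, $m_h=e-1$.

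For part (a), set $P_j=\bar f^{\,j}\D[\y]$ for $0\le j\le m_h$. Multiplication by $\bar f^{\,j}$ lives in the $x$-variable, hence commutes with the $\y$-part of the action; and any $\FF[x]$-multiple of $f\frac{d}{dx}$ stabilizes both $f^{\,j}\FF[x]$ and $g\FF[x]$ (a short $p$-adic valuation check, using $v_p(fg')=e-1$), hence induces on $\D$ a derivation stabilizing $\bar f^{\,j}\D$; together with the remark above this shows each $P_j$ is an $\hoch^1(\A_h)$-submodule. Since $f^{m_h}\equiv0$ and $f^{m_h-1}\not\equiv0$ modulo $g$, the chain $\hoch^2(\A_h)=P_0\supsetneq P_1\supsetneq\cdots\supsetneq P_{m_h}=0$ has length exactly $m_h$, and $P_j/P_{j+1}\cong(\bar f^{\,j}\D/\bar f^{\,j+1}\D)[\y]\cong\KK[\y]$ with $\KK=\FF[x]/p\FF[x]$ a finite extension of $\FF$ (for general $h$ this quotient is a direct sum of such $\KK_i[\y]$, one for each primary component still active at level $j$). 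Semisimplicity of the $P_j/P_{j+1}$ is proved next.

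For parts (b) and (c), analyze $\KK[\y]$ over $\hoch^1(\A_h)$. Extending scalars to $\KK$, the Lie algebra $\hoch^1(\A_h)\otimes_\FF\KK$ acts on $\KK[\y]$ through a subalgebra of the Witt algebra $\mathsf{Der}(\KK[t])$, and the formulas of Section~\ref{S:Gerst:char0} show that the vector field corresponding to $t^{m+1}\frac{d}{dt}$ sends $\y^n$ to a scalar multiple, affine in $n$, of $\y^{\,n+m}$; thus $\KK[\y]$ is a module of the intermediate series for $\mathsf{Der}(\KK[t])$, which is how its simple constituents arise, and reading off those affine parameters one checks that they lie in the range where the module is simple. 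Hence each $P_j/P_{j+1}$ is semisimple with simple summands obtained from intermediate series modules for the Witt algebra after the finite extension $\KK_i/\FF$, proving (a) and (b); summing over the primary components and over $j$, $\hoch^2(\A_h)$ has finite composition length $\sum_i(e_i-1)$, which is the number of irreducible factors of $\gcd(h,h')$ counted with multiplicity, proving (c). For part (d): if no cube of a non-constant polynomial divides $h$ then every $e_i\le2$, so $m_h\le1$, the filtration of (a) is trivial, and $\hoch^2(\A_h)=P_0/P_1$ is semisimple by (a)--(b) (and is $0$ when $h$ is squarefree); conversely, if some $e_i\ge3$, then inside the summand $\D_i[\y]$ the submodules $\bar p^{\,j}\D_i[\y]$ form a chain of length $e_i-1\ge2$ which, one shows, is exhaustive among $\hoch^1(\A_h)$-submodules — equivalently the extension $0\to\bar p\,\D_i[\y]\to\D_i[\y]\to\KK_i[\y]\to0$ does not split, the obstruction being a single non-vanishing structure constant in the action of a generator of $\hoch^1(\A_h)$ linking the socle layer to the top — so $\D_i[\y]$, hence $\hoch^2(\A_h)$, is not semisimple.

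The crux lies in parts (b)--(d): extracting from the explicit Gerstenhaber-bracket formulas of Section~\ref{S:Gerst:char0} the precise affine parameters of the intermediate series modules that occur, verifying that they fall in the simple locus (rather than yielding a spurious trivial or $\frac{d}{dt}$-type submodule), and, for (d), showing that over the \emph{truncated} module $\KK_i[\y]$ — polynomial, not Laurent — no submodule other than the $\bar f$-adic ones exists. The bookkeeping across the primary components, the residue-field extensions $\KK_i/\FF$, and the one-sidedness of $\KK_i[\y]$ is where the argument requires the most care.
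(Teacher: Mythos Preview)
Your approach is essentially the paper's, reorganized: you split via CRT into primary components first and then take the $p$-adic filtration, whereas the paper defines $\Theta_i=\prod_j p_j^{\min\{\alpha_j-1,i\}}$ and sets $P_i=\Theta_i\D[\y]$ globally; these are the same submodules. Your justification that the primary decomposition is $\hoch^1(\A_h)$-stable is correct (the $\frac{d}{dx}$-part of the action carries the coefficient $\pi_h=f$, so it descends to a derivation of $\D$, and derivations kill idempotents), and the check that $f\cdot\FF[x]\cdot\frac{d}{dx}$ stabilizes the $f$-adic filtration is the same as the paper's Lemma~\ref{L:Gerst:char0:struct:thi}.

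The proposal is honest about deferring the crux, but two points deserve emphasis. First, for (b)--(c) the paper does carry out the computation you postpone: the quotient $S_i$ is acted on by $\hoch^1(\A)/(\mathsf{Z}\oplus\mathcal N)\cong\bigoplus_q(\barr\D_q\otimes\W)$, and on the summand $\barr\D_j[\y]\subseteq S_i$ the action of $w_m$ is exactly \eqref{E:Gerst:char0:struct:Vmu:action} with parameter $\mu_{ij}=\frac{\alpha_j-i}{\alpha_j-1}$; since $0\le i\le\alpha_j-2$, this is never zero, which is what makes each $V_{\mu_{ij}}$ simple. Without pinning down $\mu_{ij}$ you cannot conclude simplicity, so this step is not optional.

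Second, for (d) your route through non-splitting and the claim that ``no submodule other than the $\bar f$-adic ones exists'' is both stronger than needed and harder to prove. The paper's argument is much cleaner: when some $\alpha_j\ge 3$ the nilpotent ideal $\mathcal N$ is nonzero (it contains $\ad_{p_1\cdots p_k a_1}$), this element acts nontrivially on $\y^0\in\hoch^2(\A)$ by the explicit formula \eqref{E:Gerst:char0:formula:actadgan}, yet $\mathcal N$ annihilates every composition factor $S_i$. A semisimple module on which the nilradical kills all composition factors would itself be annihilated by the nilradical, contradiction. This avoids any uniseriality or extension-class computation.
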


It is noteworthy that, in case $\FF$ is of characteristic $0$ and algebraically closed (so that each irreducible factor of $h$ is linear and the corresponding factor algebra of $\FF[x]$ is isomorphic to $\FF$), then from this theorem and previous results obtained in \cite{BLO15ja} we can recover the number of irreducible factors appearing in $h$ and the corresponding multiplicities. More specifically, let $\lambda(h)$ denote the partition encoding the multiplicities of the irreducible factors of $h$. We can conclude that if $\lambda(h)$ and $\lambda(g)$ are different partitions, then $\A_{h}$ is not derived equivalent to $\A_{g}$.

We now fix some definitions and notation. Given an associative algebra $A$ and elements $a,b \in A$, we use the commutator notation $[a,b]=ab-ba$. The center of $A$ and the centralizer of an element $a\in A$ will be denoted by $\mathsf{Z}(A)$ and $\mathsf{C}_{A}(a)$, respectively. An element $c\in A$ is \textit{normal} if $cA=Ac$ (an ideal of $A$). We remark that the set of normal elements of $A$ forms a multiplicative monoid.

Given a two-sided ideal $I$ of $A$, we will write $a\equiv b\modd{I}$ to mean that $a-b\in I$. This yields an equivalence relation on $A$ with the usual stability properties under addition and multiplication. If $J$ is another ideal such that $J\subseteq I$, then obviously $a\equiv b\modd{J}$ implies $a\equiv b\modd{I}$.  In case $I=cA$ for some normal element $c\in A$, we also use the notation $a\equiv b\modd{c}$.

Unadorned $\ot$ will always mean $\ot_\FF$. For any set $E$, $1_E$ will denote the identity map on $E$.
Given  $f\in\FF[x]$, 
$f^{(k)}$ stands for the $k$-th derivative of $f$ with respect to $x$, which we also denote by $f'$ and $f''$ in case $k=1, 2$, respectively. If $f, g\in\FF[x]$ are not both zero, then we tacitly assume that $\gcd(f, g)$ is monic.

An infinite-dimensional Lie algebra which plays an important role in the description of $\hoch^1(\A)$ is the \textit{Witt algebra}. A confusion with terminology may arise here, since the term Witt algebra has been used in the literature to mean two different things: the complex Witt algebra is the Lie algebra of derivations of the ring $\CC[z^{\pm 1}]$, with basis elements $w_n=z^{n+1}\frac{d}{dx}$, for $n\in\ZZ$;
while over a field $\KK$ of characteristic $p>0$, the Witt algebra is defined to be the Lie algebra of derivations of the ring
$\KK[z]/(z^p)$, spanned by $w_n$ for $-1\le n \le p-2$. 
Here we are considering a subalgebra of the first one (defined over the field $\FF$): 
\begin{equation}\label{E:def:Witt}
 \W = \spann_\FF\{w_i \mid i \geq -1\}, 
\end{equation}
equipped with the Lie bracket $\gb{w_m, w_n}=(n-m)w_{m+n}$, for $m, n\geq -1$. It is easy to check that if $\chara(\FF)=0$, then $\W$ is a simple Lie algebra (\textit{cf.}\ \cite[Lem.\ 5.19]{BLO15ja}). For the sake of simplicity and in accordance with the usage in \cite{BLO15ja}, we will abuse terminology and refer to the algebra $\W$ defined above as the Witt algebra. To make the distinction clear, we'll call the Lie algebra of derivations of $\FF[z^{\pm 1}]$, with basis $\{w_i\}_{i \in\ZZ}$, the \textit{full Witt algebra}.

A related Lie algebra of utmost importance in theoretical physics is the \textit{Virasoro algebra}, denoted by $\mathsf{Vir}$. It has basis $\{w_i \mid i \in\ZZ\}\cup\{ c \}$ over $\FF$, with bracket
\begin{equation*}
[c, \mathsf{Vir}]=0 \quad\mbox{and}\quad  \gb{w_m, w_n}=(n-m)w_{m+n}+\delta_{m+n, 0}\frac{m^3-m}{12}c,
\end{equation*}
for all $m, n\in\ZZ$. We will see in \eqref{E:Gerst:char0:struct:Umu:action} that the composition factors of $\hoch^2(\A_h)$ can be naturally embedded into irreducible modules for the Virasoro algebra. These are the so-called intermediate series modules and it is a result of Mathieu~\cite{oM92} that a Harish-Chandra module for $\mathsf{Vir}$ is either a highest weight module, a lowest weight module or an intermediate series module.

\medskip

\noindent  {\bf Acknowledgments:}  \  We thank Ken Brown for asking us questions motivating the topic of this paper. We would also like to express our gratitude to Quanshui Wu for kindly providing an argument confirming our conjecture on the description of the Nakayama automorphism of $\A_h$.

\section{Some technical results on commutators}\label{S:comm}

In this short section, we gather some technical lemmas about commutators in $\A_h$. We will need several additional results on centralizers and commutators in $\A_h$ from \cite{BLO15tams}, which for convenience we combine below.

\begin{prop}[{\textit{cf.}\ \cite[Lem.\ 3.4, 5.2, 6.1, 6.3; Prop.\ 5.5, 6.2; Thm.\ 5.3]{BLO15tams}}]\label{P:general:facts:Ah}
Let $\delta: \FF[x] \rightarrow \FF[x]$ be the derivation defined by $\delta(f) = f'h$ for all $f \in \FF[x]$.
\begin{enumerate}[label=\textup{(\alph*)}]
\item We have the following formula for computing in $\A_h$: 
\begin{equation}\label{eq:Ahcom}
\y^n f = \sum_{j=0}^n {n \choose j} \delta^j (f) \y^{n-j}.
\end{equation}
\item $\A_h$ is a free left $\FF[x]$-module with basis $\left\{h^i y^i\right\}_{i\geq 0}$.
\item If $\chara(\FF)=0$, then $\cent{h}=\FF$; if $\chara(\FF)=p>0$, then $\cent{h}$ is the polynomial algebra in the variables $x^p$ and $h^p y^p$.
\item The centralizer $\mathsf{C}_{\A_h}(x)$ is generated by $\FF[x]$ and $\cent{h}$.
\item $\A_h$ is free over $\cent{h}$ and over $\mathsf{C}_{\A_h}(x)$. If $\chara(\FF)=p>0$, then 
\begin{equation*}
\A_h=\bigoplus_{i, j=0}^{p-1}\cent{h} x^ih^jy^j =\bigoplus_{j=0}^{p-1}\mathsf{C}_{\A_h}(x) h^jy^j.
\end{equation*}
\item $[\A_h, \A_h]\subseteq h\A_h$. If $\chara(\FF)=0$, then $[x, \A_h]=[\y, \A_h]=[\A_h, \A_h]=h\A_h$.
\end{enumerate}
\end{prop}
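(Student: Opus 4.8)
The plan is to cite \cite{BLO15tams} for most items and only supply the short arguments needed to combine them. First I would recall from the construction of $\A_h$ as the subalgebra of $\A_1$ generated by $x$ and $\y=yh$ that, for $f\in\FF[x]$, one has $[\y,f]=[yh,f]=[y,f]h=f'h=\delta(f)$, so $\y f=f\y+\delta(f)$. Part (a) then follows by induction on $n$: assuming \eqref{eq:Ahcom} for $n$, multiply on the left by $\y$, push $\y$ past each $\delta^j(f)\in\FF[x]$ using $\y g=g\y+\delta(g)$, and collect terms via the Pascal identity ${n\choose j}+{n\choose j-1}={n+1\choose j}$. For part (b), since $\A_1$ is a free left $\FF[x]$-module with basis $\{y^i\}_{i\ge 0}$ and $\A_h$ is generated by $x$ and $yh$ over $\FF[x]$, repeated use of \eqref{eq:Ahcom} shows $\A_h=\sum_{i\ge 0}\FF[x]h^iy^i$; freeness is inherited from the freeness of $\{y^i\}$ in $\A_1$ once one checks that a nontrivial left-$\FF[x]$-relation among the $h^iy^i$ would give one among the $y^i$.

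Next I would address the centralizer statements (c), (d), (e), which are exactly \cite[Lem.\ 5.2, 6.1; Prop.\ 5.5, 6.2; Thm.\ 5.3]{BLO15tams}; here I would just state that in characteristic $0$ the relation $[\y,x]=h\ne 0$ forces any central element, written in the basis of (b) as $\sum_i f_i h^iy^i$, to have all $f_i=0$ for $i\ge 1$ (by comparing leading terms in $y$ after commuting with $x$) and $f_0\in\FF$ (by commuting with $\y$), whereas in characteristic $p$ the elements $x^p$ and $(hy)^p=h^py^p$ are central because $\ad(x)$ and $\ad(\y)$ are nilpotent-like derivations whose $p$-th powers vanish on the generators, and a dimension/degree count over $\cent{h}$ using (b) gives the stated free decompositions $\A_h=\bigoplus_{i,j=0}^{p-1}\cent{h}x^ih^jy^j$ and $\A_h=\bigoplus_{j=0}^{p-1}\mathsf{C}_{\A_h}(x)h^jy^j$, with $\mathsf{C}_{\A_h}(x)$ generated by $\FF[x]$ and $\cent{h}$.

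Finally, for part (f): the inclusion $[\A_h,\A_h]\subseteq h\A_h$ follows since $\A_h$ is generated by $x$ and $\y$, the bracket is a biderivation, $[x,x]=[\y,\y]=0$, $[\y,x]=h\in h\A_h$, and $h\A_h=\A_h h$ is a two-sided ideal (as $h$ is normal in $\A_h$, because $h\in\FF[x]$ and $\y h=h\y+\delta(h)\in h\A_h$). For the reverse inclusions in characteristic $0$, from \eqref{eq:Ahcom} one computes $[\y,x^{n+1}]=(n+1)x^nh$ and more generally $[\y,f]=\delta(f)=f'h$, so $[\y,\A_h]$ already contains $\FF[x]'h=\FF[x]h$ when $\chara(\FF)=0$ (since every polynomial is a derivative); iterating with \eqref{eq:Ahcom} to reach the basis elements $h^iy^i$, one shows $[\y,\A_h]\supseteq h\A_h$, and similarly $[x,\A_h]\supseteq h\A_h$ using $[x,h^iy^i]$. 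Hence $[x,\A_h]=[\y,\A_h]=[\A_h,\A_h]=h\A_h$. The main obstacle here is the bookkeeping in this last step: one must control, via \eqref{eq:Ahcom}, the ideal generated by the iterated commutators and verify it exhausts all of $h\A_h$ rather than a proper sub-bimodule, which is where the characteristic-$0$ hypothesis (every polynomial is a derivative, and $\gcd$ considerations do not intervene) is essential.
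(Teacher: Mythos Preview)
The paper does not actually prove this proposition: it is stated with a citation to \cite{BLO15tams} and used as a toolbox of known facts, with no argument given in the text. So your proposal already goes further than the paper does, and for the most part your sketches (the induction for (a), the triangularity argument for (b), the biderivation argument for $[\A_h,\A_h]\subseteq h\A_h$, and the characteristic-$0$ surjectivity of $f\mapsto f'$ for the reverse inclusion in (f)) are the standard arguments from \cite{BLO15tams} and are correct in outline.

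There is one genuine slip in your treatment of (c). You write that ``$(hy)^p=h^py^p$'' and that this element is central. The identity $(hy)^p=h^py^p$ is false in general: already for $p=2$ and $h=x$ one has $(xy)^2=x^2y^2+xy\neq x^2y^2$ in $\A_1$. What is true, and what the proposition actually asserts, is that $h^py^p$ (not $\y^p=(yh)^p$) is central. The correct reason is that in characteristic $p$ both $x^p$ and $y^p$ lie in $\mathsf{Z}(\A_1)$ (since $(\ad y)^p=\ad(y^p)$ and $(\ad y)^p(x)=0$, and similarly for $x$), hence so does $h^p=h(x)^p\in\FF[x^p]$, and therefore $h^py^p\in\mathsf{Z}(\A_1)\cap\A_h\subseteq\mathsf{Z}(\A_h)$. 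Your phrase ``$\ad(x)$ and $\ad(\y)$ are nilpotent-like derivations whose $p$-th powers vanish on the generators'' is fine for $x^p$ but does not directly give $h^py^p$; you need to pass through $\mathsf{Z}(\A_1)$ as above. With that correction, your sketch is an acceptable expansion of what the paper leaves to the reference.
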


\begin{lemma}\label{L:comm:bx}
For any $0\neq h\in\FF[x]$, $[\FF[x], \A_h]=[x, \A_h]$.
\end{lemma}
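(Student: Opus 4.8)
The reverse inclusion $[x,\A_h]\subseteq[\FF[x],\A_h]$ is immediate since $x\in\FF[x]$, so the plan is to establish $[\FF[x],\A_h]\subseteq[x,\A_h]$. The key step I would isolate first is that $[x,\A_h]$, though defined only as an $\FF$-subspace of $\A_h$, is in fact an $\FF[x]$-sub-bimodule of $\A_h$: for $g\in\FF[x]$ and $a\in\A_h$, expanding both sides and using only that $g$ and $x$ commute gives
\[
g\,[x,a]=gxa-gax=[x,ga],\qquad\text{and}\qquad[x,a]\,g=xag-axg=[x,ag].
\]
Thus left and right multiplication by polynomials in $x$ preserve $[x,\A_h]$.

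With that in hand, I would show $[f,a]\in[x,\A_h]$ for every $f\in\FF[x]$ and $a\in\A_h$. Since $[\,\cdot\,,a]$ is $\FF$-linear and $\FF[x]=\spann_\FF\{x^n:n\ge 0\}$, it suffices to treat $f=x^n$, which I would do by induction on $n$. For $n=0$ there is nothing to prove. For $n\ge 1$, the Leibniz identity $[bc,a]=b[c,a]+[b,a]c$ gives $[x^n,a]=x\,[x^{n-1},a]+[x,a]\,x^{n-1}$. By the inductive hypothesis $[x^{n-1},a]=[x,c]$ for some $c\in\A_h$, so $x\,[x^{n-1},a]=x\,[x,c]=[x,xc]\in[x,\A_h]$ and $[x,a]\,x^{n-1}=[x,a x^{n-1}]\in[x,\A_h]$, both by the bimodule property of the first step. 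Hence $[x^n,a]\in[x,\A_h]$ and the induction closes, yielding $[\FF[x],\A_h]\subseteq[x,\A_h]$.

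The step I expect to need care — the one that might look like an obstacle — is precisely the reduction from the single algebra generator $x$ of $\FF[x]$ to all of $\FF[x]$: there is no general ``generators suffice'' principle here, because $[x,\A_h]$ is in general not an ideal of $\A_h$ (for instance, over a field of characteristic $p$ one checks from Proposition~\ref{P:general:facts:Ah}(b) that the $\FF[x]$-coordinate of $h^my^m$ in any element of $[x,\A_h]$ must vanish whenever $p\mid m+1$, so $[x,\A_h]$ is a proper sub-bimodule of $h\A_h$). The bimodule identities of the first step are exactly what make the induction legitimate, and they are characteristic-free; in characteristic $0$ one could alternatively just invoke Proposition~\ref{P:general:facts:Ah}(f), which gives $[\FF[x],\A_h]\subseteq[\A_h,\A_h]=h\A_h=[x,\A_h]$ directly.
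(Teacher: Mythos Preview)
Your proof is correct and takes a genuinely different route from the paper's. The paper argues by cases on the characteristic: in characteristic $0$ it invokes Proposition~\ref{P:general:facts:Ah}(f) exactly as you note at the end; in characteristic $p>0$ it appeals to the explicit structural decomposition $[x,\A_h]=\bigoplus_{j=0}^{p-2} h\,\mathsf{C}_{\A_h}(x)\,h^jy^j$ and $\A_h=\bigoplus_{j=0}^{p-1}\mathsf{C}_{\A_h}(x)\,h^jy^j$ from \cite{BLO15tams}, then computes $[ch^jy^j,f]$ directly via \eqref{eq:Ahcom} and reads off that the result lies in the required summands. Your argument, by contrast, is characteristic-free and uses nothing about $\A_h$ beyond associativity and the commutativity of $\FF[x]$: the observation that $[x,\A_h]$ is an $\FF[x]$-sub-bimodule reduces everything to the Leibniz identity and a short induction. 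This is more elementary and more portable---the same argument shows $[\FF[t],B]=[t,B]$ in any associative algebra $B$ containing a polynomial subalgebra $\FF[t]$---whereas the paper's approach, though heavier, has the side benefit of making the positive-characteristic structure of $[x,\A_h]$ explicit, which is reused immediately afterward in Lemma~\ref{L:comm:adxpy}.
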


\begin{proof}
If $\chara(\FF)=0$, then the claim follows from $[x, \A_h]=[\A_h, \A_h]$, by Proposition~\ref{P:general:facts:Ah}. 

So assume $\chara(\FF)=p>0$. By \cite[Lem.\ 6.3]{BLO15tams} and Proposition~\ref{P:general:facts:Ah}, we know that 
\[ [x, \A_{h}] = \bigoplus_{j=0}^{p-2}h \mathsf{C}_{\A_h}(x) h^j y^j \hbox{ and } \A_h = \bigoplus_{j=0}^{p-1} \mathsf{C}_{\A_h}(x) h^j y^j. \]
Given $f\in\FF[x]$, $c\in\mathsf{C}_{\A_h}(x)$ and $0\leq j\leq p-1$ we have, using \eqref{eq:Ahcom}:
\begin{align*}
[ch^jy^j, f]= ch^j[y^j, f]=ch^j\sum_{k=1}^j {j\choose k} f^{(k)}y^{j-k}=h\sum_{k=1}^j {j\choose k}ch^{k-1} f^{(k)}h^{j-k}y^{j-k}.
\end{align*}
So, $[ch^jy^j, f] \in  \bigoplus_{j=0}^{p-2}h \mathsf{C}_{\A_h}(x) h^j y^j=[x, \A_{h}].$
\end{proof}

Now we can characterize the subspace $[x, \A_h]+[\y, \A_h]$ in case $\chara(\FF)=p>0$.

\begin{lemma}\label{L:comm:adxpy}
Assume  $\chara(\FF)=p>0$. The following hold:
\begin{enumerate}[label=\textup{(\alph*)}]
\item for all $z\in\cent{h}$, $f\in\FF[x]$ and $0\leq j\leq p-2$, we have
\begin{equation*}
[\y, zfh^jy^j]\in [x, \A_h] \quad \mbox{and} \quad [\y, zfh^{p-1}y^{p-1}]=zhf' h^{p-1}y^{p-1};
\end{equation*}%
\label{L:comm:adxpy:1}
\item $\displaystyle [x, \A_h]+[\y, \A_h]=\bigoplus_{\substack{i, j=0\\ (i, j)\neq (p-1, p-1)}}^{p-1}\cent{h} hx^ih^jy^j$;
\label{L:comm:adxpy:2}
\item $\displaystyle h\A_h=\left( [x, \A_h]+[\y, \A_h]\right)\oplus h\cent{h} x^{p-1}h^{p-1}y^{p-1}$.\label{L:comm:adxpy:3}
\end{enumerate}
\end{lemma}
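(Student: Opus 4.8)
The statement has three parts, but part~(a) carries all the content; parts~(b) and~(c) will then follow by bookkeeping with the $\cent{h}$-module decompositions of $\A_h$ recorded in Proposition~\ref{P:general:facts:Ah}, together with Lemma~\ref{L:comm:bx}. For part~(a) the plan is to compute $[\y, h^jy^j]$ explicitly, working inside the Weyl algebra $\A_1$ where $\y = yh$ and $yf = fy + f'$ for $f\in\FF[x]$ (so $y^n f = \sum_k\binom{n}{k}f^{(k)}y^{n-k}$, the $\A_1$-analogue of \eqref{eq:Ahcom}). Writing $\y h^jy^j = y h^{j+1}y^j$ and $h^jy^j\y = h^j y^{j+1}h$ and pushing the $y$'s past the polynomials, the two highest-order terms cancel and one is left with
\[
[\y, h^jy^j] \;=\; -\sum_{k=2}^{j+1}\binom{j+1}{k}\,h^{j}h^{(k)}\,y^{\,j+1-k}\;=\;-\sum_{k=2}^{j+1}\binom{j+1}{k}\bigl(h^{k-1}h^{(k)}\bigr)\,h^{\,j+1-k}y^{\,j+1-k},
\]
where each coefficient $h^{k-1}h^{(k)}$ in the second form is visibly divisible by $h$ since $k\geq 2$. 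As $[\y, f] = f'h$ for $f\in\FF[x]$ (Proposition~\ref{P:general:facts:Ah}(a) with $n=1$), a one-line computation gives $[\y, fh^jy^j] = f[\y, h^jy^j] + f'h\cdot h^jy^j$, and multiplying by a central $z\in\cent{h}$ yields
\[
z[\y, fh^jy^j]\;=\; zf'h\cdot h^jy^j\;-\;\sum_{k=2}^{j+1}\binom{j+1}{k}\bigl(zfh^{k-1}h^{(k)}\bigr)\,h^{\,j+1-k}y^{\,j+1-k}.
\]

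To finish~(a), recall from Lemma~\ref{L:comm:bx} and \cite[Lem.\ 6.3]{BLO15tams} that $[x,\A_h]=\bigoplus_{j=0}^{p-2}h\,\mathsf{C}_{\A_h}(x)\,h^jy^j$, and note that $zf'$ and $zfh^{k-2}h^{(k)}$ lie in the commutative ring $\mathsf{C}_{\A_h}(x)$, so that $zf'h = h\cdot(zf')$ and, for $k\geq 2$, $zfh^{k-1}h^{(k)} = h\cdot(zfh^{k-2}h^{(k)})$ both lie in $h\,\mathsf{C}_{\A_h}(x)$. When $0\le j\le p-2$, every power of $y$ occurring above has exponent $\le j\le p-2$, so each summand lies in $h\,\mathsf{C}_{\A_h}(x)\,h^{j'}y^{j'}\subseteq[x,\A_h]$ for some $j'\le p-2$; this is the first assertion of~(a). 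When $j=p-1$, the coefficients $\binom{p}{k}$ vanish in $\FF$ for $2\le k\le p-1$, while the surviving $k=p$ term involves $h^{(p)}$, which is $0$ in characteristic $p$ (a $p$-th derivative of any polynomial vanishes, since $n(n-1)\cdots(n-p+1)$ is a product of $p$ consecutive integers). Hence the whole sum disappears and $z[\y, fh^{p-1}y^{p-1}] = zf'h\cdot h^{p-1}y^{p-1} = zhf'h^{p-1}y^{p-1}$, the second assertion.

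For~(b) and~(c), put $R=\cent{h}$ and assemble from Proposition~\ref{P:general:facts:Ah} the decompositions $\mathsf{C}_{\A_h}(x)=\bigoplus_{i=0}^{p-1}Rx^i$, $\A_h=\bigoplus_{i,j=0}^{p-1}Rx^ih^jy^j$, and — since $\A_h$ is a domain and $h$ is central, so left multiplication by $h$ is an injective $R$-linear map — $h\A_h=\bigoplus_{i,j=0}^{p-1}Rhx^ih^jy^j$. Lemma~\ref{L:comm:bx} then rewrites $[x,\A_h]=\bigoplus_{i=0}^{p-1}\bigoplus_{j=0}^{p-2}Rhx^ih^jy^j$, i.e.\ exactly the summands with $j\le p-2$. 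By part~(a), $[\y,\A_h]$ adds nothing new in the levels $j\le p-2$, and in level $j=p-1$ it contributes precisely $\spann\{zhf'h^{p-1}y^{p-1}: z\in R,\ f\in\FF[x]\}$; since the image of $\frac{d}{dx}$ on $\FF[x]$ is $\bigoplus_{i=0}^{p-2}\FF[x^p]x^i$ and $\FF[x^p]\subseteq R$, this span equals $\bigoplus_{i=0}^{p-2}Rhx^ih^{p-1}y^{p-1}$. Adding the two contributions gives $[x,\A_h]+[\y,\A_h]=\bigoplus_{(i,j)\neq(p-1,p-1)}Rhx^ih^jy^j$, which is~(b); and since the single summand of $h\A_h$ thereby omitted is $Rhx^{p-1}h^{p-1}y^{p-1}=hRx^{p-1}h^{p-1}y^{p-1}$, part~(c) is immediate.

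The delicate point is the level-by-level bookkeeping in part~(a): one must check that left multiplication by the polynomial $h$ never raises the $y$-degree of a term, so that all of $[\y, \mathsf{C}_{\A_h}(x)h^jy^j]$ with $j\le p-2$ stays inside $[x,\A_h]$, and — crucially — that in the one level $j=p-1$ where $[\y,\A_h]$ genuinely produces something beyond $[x,\A_h]$, the characteristic-$p$ vanishing of $\binom{p}{k}$ and of $h^{(p)}$ collapses the formula to the clean expression $zhf'h^{p-1}y^{p-1}$. This is exactly what forces the $(p-1,p-1)$ summand to be excluded in~(b) and to appear as the complement in~(c).
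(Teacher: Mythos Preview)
Your proof is correct and follows essentially the same direct-computation approach as the paper: compute $[\y,h^jy^j]$ explicitly, check that every term lands in $h\,\mathsf{C}_{\A_h}(x)\,h^{j'}y^{j'}$ with $j'\le p-2$, and then do the $\cent{h}$-module bookkeeping for (b) and (c). Two small remarks. First, your parenthetical ``$h$ is central'' is false in general (for instance $[\y,h]=h'h$), but your argument does not actually use this: you only need that $h$ commutes with $\cent{h}$ and with $\FF[x]$, together with injectivity of left multiplication by $h$ in the domain $\A_h$, all of which hold. Second, for the identity $[\y,h^{p-1}y^{p-1}]=0$ the paper gives a slicker one-line argument using that $h^p,y^p\in\mathsf{Z}(\A_1)$, namely $h^{p-1}y^{p-1}\y=h^{p-1}y^{p}h=h^py^p=yh^{p}y^{p-1}=\y h^{p-1}y^{p-1}$; your derivation via the vanishing of $\binom{p}{k}$ for $2\le k\le p-1$ and of $h^{(p)}$ is of course equally valid.
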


\begin{proof}
For the first part of (a), if suffices to show that  $[\y, fh^jy^j]\in [x, \A_h]$ for all $0\leq j\leq p-2$, as the latter is clearly a $\cent{h}$-module. Since $\y-hy=h'\in\FF[x ]$ and $[\FF[x], \A_h]=[x, \A_h]$, we need to prove that $[hy, fh^jy^j]\in [x, \A_h]$. Moreover, 
\begin{equation*}
[hy, fh^jy^j]=[hy, f]h^jy^j+f[hy, h^jy^j]=hf'h^jy^j+f[hy, h^jy^j]
\end{equation*}
and $hf'h^jy^j\in[x, \A_h]$, so we are left with showing that $[hy, h^jy^j]\in[x, \A_h]$. This is clear for $j=0, 1$, and for $2\leq j\leq p-2$ we have, using \eqref{eq:Ahcom}:
\begin{align*}
 [hy, h^jy^j]&=[hy, h^j]y^j + h^j[hy, y^j]=jh'h^jy^j + h^j[h, y^j]y\\ &=jh'h^jy^j - h^j\sum_{k=1}^j {j\choose k} h^{(k)}y^{j-k+1}=- h^j\sum_{k=2}^j {j\choose k} h^{(k)}y^{j-k+1}\\ &=-\sum_{\ell=1}^{j-1} {j\choose \ell-1} h^{j-\ell-1}h^{(j-\ell+1)}h^{\ell+1}y^{\ell}.
\end{align*}
This proves that $[\y, zfh^jy^j]\in [x, \A_h]$ for all $z\in\cent{h}$, $f\in\FF[x]$ and $0\leq j\leq p-2$.

Now notice that, since $h^p, y^p\in\cent{1}$, then 
\begin{equation}\label{E:comm:adxpy:yhatcomm}
h^{p-1}y^{p-1}\y=h^{p-1}y^{p}h=h^p y^p=yh^{p}y^{p-1}=\y h^{p-1}y^{p-1},
\end{equation}
so $[\y, h^{p-1}y^{p-1}]=0$. Thus, for $z\in\cent{h}$ and $f\in\FF[x]$ we have
\begin{equation*}
[\y, zfh^{p-1}y^{p-1}]=z[\y, f]h^{p-1}y^{p-1}=zhf' h^{p-1}y^{p-1},
\end{equation*}
which finishes the proof of (a).

Since $\displaystyle \cent{h} h\cdot \im\left( \frac{d}{dx}\right)h^{p-1}y^{p-1}=\bigoplus_{i=0}^{p-2}\cent{h} hx^ih^{p-1}y^{p-1}$, (b) is also established and (c) follows from (b), by Proposition~\ref{P:general:facts:Ah}.
\end{proof}

\section{Minimal free bimodule resolution of $\A_h$}\label{S:res}

For simplicity, throughout the remainder of this paper, we denote $\A_h$ simply by $\A$, reserving the notation $\A_h$ for situations in which we want to emphasize $h$ or make particular choices for $h$, e.g.\ when referring to the Weyl algebra $\A_1$.

In this section, we construct a free resolution of $\A$ as an $\A$-bimodule or, equivalently, as a left $\A^{e}$-module, where $\A^{e}=\A\otimes\A^{op}$ is the enveloping algebra of $\A$ and $\A^{op}$ is the opposite algebra of $\A$. 

We will follow the approach in \cite{CS15}. Let $\V=\FF x\oplus\FF\y$ be the vector subspace of $\A$ spanned by $x$ and $\y$ and let $\R=\FF\rr$ be a vector space of dimension $1$. Consider the following sequence of right $\A$-module maps:
\begin{equation}\label{E:res:K}
\begin{tikzcd}
0\arrow[r] &[-.5em] \A\ot\R\ot\A \arrow[r, "\dd_1"] & \A\ot\V\ot\A \arrow[r, "\dd_0"] \arrow[l, dashed, bend left=20, "\s_1"]  & \A\ot\A \arrow[r, "\mu"] \arrow[l, dashed, bend left=20, "\s_0"] &  \arrow[r] \A \arrow[l, dashed, bend left=30, "\s_{-1}"]  & [-.5em] 0.
\end{tikzcd}
\end{equation}
The maps $\mu, \dd_0$ and $\dd_1$ are in fact $\A$-bimodule maps, whereas the maps $\s_{-1}, \s_0$ and $\s_1$ are just right $\A$-module maps. We describe them all below, except for $\s_1$, which we discuss only in Section~\ref{S:hom}:
\begin{itemize}
\item $\mu$ is the multiplication map;
\item $\dd_0(1\ot v\ot 1)=v\ot 1-1\ot v$ for all $v\in\V$;
\item $\s_{-1}(1)=1\ot 1$;
\item $\s_0(x^k\y^\ell\ot 1)=\sum_{i=0}^{k-1} x^i\ot x\ot x^{k-1-i}\y^\ell + \sum_{j=0}^{\ell-1} x^k\y^j\ot \y\ot \y^{\ell-1-j}$, with the usual convention that an empty summation is null; in particular, $\s_0(1\ot 1)=0$;
\item $\dd_1(1\ot \rr \ot 1)=1\ot\y\ot x + \y\ot x\ot 1 -1\ot x\ot \y - x\ot\y\ot 1-\s_0(h\ot 1)$.
\end{itemize}

It is easy to check that 
\begin{equation}\label{E:res:chain}
\mu\circ\dd_0=0=\dd_0\circ\dd_1,
\end{equation}
so \eqref{E:res:K} is a complex of $\A$-bimodules. In fact, we already know that \eqref{E:res:K} is exact, and hence a free resolution of $\A$, since its associated graded complex is exact (see \cite{CS15}), but it will be useful for further computations to have an explicit contracting homotopy. 

We claim that the right $\A$-module maps $\s_{-1}, \s_0$ and $\s_1$ form the desired contracting homotopy for \eqref{E:res:K}, i.e., that the following hold:
\begin{equation}\label{E:res:homotopies}
\begin{split}
\mu\circ \s_{-1} &=1_{\A},\\
\s_{-1}\circ\mu + \dd_0\circ \s_0 &=1_{\A\ot\A},\\
\s_{0}\circ\dd_0 + \dd_1\circ \s_1 &=1_{\A\ot\V\ot\A},\\
\s_1\circ \dd_1 &=1_{\A\ot\R\ot\A}.
\end{split}
\end{equation}
The first two equalities are easy to prove and are left as an exercise. So as not to stray from the main ideas of this section, we will defer the construction of the map $\s_1$ and the proof of the last two relations in \eqref{E:res:homotopies} until Section~\ref{S:hom} (see Theorem~\ref{T:hom:main}).

Applying the functor $\Hom_{\A^e}(-, \A)$ to the 
resolution associated with \eqref{E:res:K}, we get the commutative diagram

\begin{equation*}
\begin{tikzcd}[row sep=5ex, column sep = .9em]
0\arrow[r] &  [-.5em] \Hom_{\A^e}(\A\ot\A, \A) \arrow[r, "\dd^*_0"] \arrow[d, "\rho_0"] & \Hom_{\A^e}(\A\ot\V\ot\A, \A) \arrow[r, "\dd^*_1"] \arrow[d, "\rho_1"]  & \Hom_{\A^e}(\A\ot\R\ot\A, \A) \arrow[r] \arrow[d, "\rho_2"]  & [-.5em] 0\\
0\arrow[r]  &\A \arrow[r, "\rD_1"] & \A\oplus\A \arrow[r, "\rD_2"] & \A \arrow[r] & 0.
\end{tikzcd}
\end{equation*}

\medskip

\noindent
where $\dd^*_i$ is right composition with $\dd_i$, for $i=0, 1$, and the vector space isomorphisms $\rho_j$ are defined as usual by:
\begin{equation*}
\rho_0(f)=f(1\ot 1), \quad \rho_1(f)=(f(1\ot x\ot 1), f(1\ot\y\ot 1)), \quad \rho_2(f)=f(1\ot\rr \ot 1). 
\end{equation*}

The maps $\rD_1$ and $\rD_2$ are given by:
\begin{align}\label{E:res:defD1}
&\rD_1(\alpha) =([x, \alpha], [\y, \alpha]) \quad \mbox{and}\\
&\rD_2(\alpha, \beta) = [\beta, x] + [\y, \alpha] -F_\alpha (h),\label{E:res:defD2}
\end{align}
for all $\alpha, \beta \in\A$, where $F_\alpha:\FF[x]\longrightarrow\A$ is the linear map defined by 
\begin{equation}\label{E:res:defFalpha}
F_\alpha(x^s)=\sum_{\ell=0}^{s-1}x^\ell \alpha x^{s-\ell-1}, \quad \mbox{for $s\geq 0$,}
\end{equation}
with the convention that $F_\alpha(1)=0$.

Since $F_{z\alpha}=zF_\alpha$, for $z\in\mathsf{Z}(\A)$, the maps $\rho_i$ and $\rD_j$ are actually $\mathsf{Z}(\A)$-module maps. It follows that, as a $\mathsf{Z}(\A)$-module, the Hochschild cohomology of $\A$ can be determined from the maps $\rD_i$:
\begin{itemize}
\item $\hoch^0(\A)=\mathsf{Z}(\A)=\ker \rD_1$;
\item $\hoch^1(\A)= \der (\A) /\inder (\A) \cong \ker \rD_2/\im \rD_1$;
\item $\hoch^2(\A) \cong \A/\im \rD_2$ is the space of equivalence classes of infinitesimal deformations of $\A$ (see \cite{mG64});
\item $\hoch^i(\A) =0$ for all $i\geq 3$.
\end{itemize}
The degree zero cohomology $\hoch^0(\A)$ has been computed in \cite[Section 5]{BLO15tams}, while the derivations and the Lie algebra structure of $\hoch^1(\A)$ were determined in \cite{BLO15ja}, both over arbitrary fields. 

\begin{exams} 
Assume $\chara(\FF)=0$. 
\begin{itemize}
\item If $h=1$, then $\A_1$ is the Weyl algebra and it is well known (see \cite{rS61}) that $\hoch^0(\A_1)=\FF$ and $\hoch^i(\A_1)=0$ for all $i>0$. In this case, $\A_1$ is graded, setting $\deg(x)=1$ and $\deg(y)=-1$.
\item If $h=x$, then $\A_x$ is the universal enveloping algebra of the two-dimensional non-abelian Lie algebra. In this case, $\hoch^0(\A_x)=\FF=\hoch^1(\A_x)$, by \cite[Thm.\ 5.29]{BLO15ja}. We will see shortly that $\hoch^2(\A_x)=0$.
\item If $h=x^2$, then $\A_{x^2}$ is the Jordan plane. In this case, $\A_{x^2}$ is graded, setting 
$\deg(x)=\deg(\y)=1$. Note that $\hoch^0(\A_{x^2})=\FF$ and by \cite[Thm.\ 5.29]{BLO15ja}, as a Lie algebra, $\hoch^1(\A_{x^2})=\FF c\oplus \W$, where $c$ is central and $\W$ is  the Witt algebra given in \eqref{E:def:Witt}.
We will see that $\hoch^2(\A_{x^2})\cong \FF[\y]$ is naturally a simple module for $\W$ and that this module can be embedded into a simple module for the Virasoro algebra.
\end{itemize}
\end{exams}

\medskip

Our main goal in this section will be to determine the image of $\rD_2$ and the quotient $\cent{}$-module $\A/\im \rD_2$. Later we will determine the Lie action of $\hoch^1(\A)$ on $\hoch^2(\A)$ given by the Gerstenhaber bracket. Towards that goal, we start out by studying the map $F_\alpha$ given in \eqref{E:res:defFalpha}. It will be convenient to introduce a mild generalization, so that $F_\alpha$ can be defined for all $\alpha$ in the Weyl algebra $\A_1\supseteq \A$. With this extension, the range of $F_\alpha$ becomes $\A_1$, but we will still use $F_\alpha$ to denote this map.

\begin{lemma}\label{L:res:propsFalpha}
For $\alpha\in\A_1$, let $F_\alpha:\FF[x]\longrightarrow\A_1$ be the linear map defined by \eqref{E:res:defFalpha}. The following hold for all $f, g\in\FF[x]$:
\begin{enumerate}[label=\textup{(\alph*)}]
\item $F_\alpha(fg)=fF_\alpha(g) + F_\alpha(f)g$, i.e., $F_\alpha$ is a derivation.\label{L:res:propsFalpha:1}
\item If $\alpha\in\mathsf{C}_{\A_1}(x)$ then $F_\alpha(f)=\alpha f'$.\label{L:res:propsFalpha:3}
\item Moreover, if $\alpha\in\A$, then $F_\alpha(f)\in f'\alpha + [x, \A]$.\label{L:res:propsFalpha:4}
\end{enumerate}
\end{lemma}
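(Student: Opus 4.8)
Here is my plan for proving Lemma~\ref{L:res:propsFalpha}.

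\textbf{Part (a).} The claim is that $F_\alpha$ is a derivation of $\FF[x]$ with values in $\A_1$ (where $\FF[x]$ acts on $\A_1$ by left and right multiplication). Since $F_\alpha$ is linear and $\FF[x]$ is spanned by the monomials $x^s$, it suffices to check the Leibniz rule on pairs of monomials $f = x^r$, $g = x^s$. Using the defining formula \eqref{E:res:defFalpha}, $F_\alpha(x^{r+s}) = \sum_{\ell=0}^{r+s-1} x^\ell \alpha x^{r+s-1-\ell}$; I would split this sum at $\ell = r$, i.e.\ into $\sum_{\ell=0}^{r-1}$ and $\sum_{\ell=r}^{r+s-1}$. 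The first block is $\big(\sum_{\ell=0}^{r-1} x^\ell \alpha x^{r-1-\ell}\big) x^s = F_\alpha(x^r) x^s$, and after reindexing $\ell \mapsto \ell + r$ the second block is $x^r \big(\sum_{\ell=0}^{s-1} x^\ell \alpha x^{s-1-\ell}\big) = x^r F_\alpha(x^s)$. This is the desired identity, and it also correctly handles the edge cases $r=0$ or $s=0$ given the convention $F_\alpha(1)=0$.

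\textbf{Part (b).} If $\alpha \in \mathsf{C}_{\A_1}(x)$, then $\alpha$ commutes with every $x^i$, so in $F_\alpha(x^s) = \sum_{\ell=0}^{s-1} x^\ell \alpha x^{s-1-\ell}$ each summand equals $\alpha x^{s-1}$, giving $F_\alpha(x^s) = s\,\alpha x^{s-1} = \alpha (x^s)'$. By linearity this extends to $F_\alpha(f) = \alpha f'$ for all $f \in \FF[x]$; alternatively, one notes that $f \mapsto \alpha f'$ is a derivation agreeing with $F_\alpha$ on $x$, and invokes part (a).

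\textbf{Part (c).} Here $\alpha \in \A$. By part (a) it is enough to prove $F_\alpha(x) \in \alpha + [x,\A]$ for the generator $x$ and then argue that the set of $f$ satisfying $F_\alpha(f) \in f'\alpha + [x,\A]$ is closed under the operations needed; but $F_\alpha(x) = \alpha$ exactly, so one needs the Leibniz-type propagation. Concretely, using part (a), $F_\alpha(x^{s+1}) = x F_\alpha(x^s) + F_\alpha(x) x^s = x F_\alpha(x^s) + \alpha x^s$. By induction assume $F_\alpha(x^s) = s x^{s-1}\alpha + c_s$ with $c_s \in [x,\A]$ (base case $s=1$: $F_\alpha(x) = \alpha$, $c_1 = 0$). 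Then $F_\alpha(x^{s+1}) = s x^s \alpha + x c_s + \alpha x^s$. Now $\alpha x^s = x^s \alpha + [\alpha, x^s]$ and $[\alpha, x^s] \in [x,\A]$ since $[\A,\A] \subseteq h\A \subseteq \A$ and more directly $[\alpha, x^s]$ lies in $[\FF[x],\A] = [x,\A]$ by Lemma~\ref{L:comm:bx}; also $x c_s \in [x,\A]$ because $[x,\A]$ is a right $\A$-module... wait, more carefully: $[x,\A]$ is stable under left multiplication by $x$? We have $x[x,\beta] = [x, x\beta] \in [x,\A]$, yes. So $F_\alpha(x^{s+1}) = (s+1) x^s \alpha + c_{s+1}$ with $c_{s+1} = x c_s + [\alpha,x^s] \in [x,\A]$, completing the induction. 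By linearity, $F_\alpha(f) \in f'\alpha + [x,\A]$ for all $f$.

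\textbf{Main obstacle.} The only subtle point is the bookkeeping in part (c): one must be careful that $[x,\A]$ is stable under left multiplication by $x$ and that the commutators $[\alpha, x^s]$ genuinely land in $[x,\A]$ (via Lemma~\ref{L:comm:bx}, $[\FF[x],\A]=[x,\A]$). Everything else is a routine reindexing of finite sums, and part (a) does most of the heavy lifting by reducing (b) and (c) to their behavior on the single generator $x$.
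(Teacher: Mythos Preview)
Your proof is correct and follows essentially the same approach as the paper: parts (a) and (b) are identical to the paper's argument, and for part (c) you likewise reduce to monomials and invoke Lemma~\ref{L:comm:bx} to identify $[\FF[x],\A]=[x,\A]$. The only cosmetic difference is that the paper computes $F_\alpha(x^k)$ directly---writing $x^{k-\ell-1}\alpha x^\ell = x^{k-1}\alpha + [x^{k-\ell-1}\alpha, x^\ell]$ and summing---whereas you reach the same conclusion by induction on $s$ via the Leibniz rule from part (a); both arguments are equally short and rely on the same ingredients.
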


\begin{proof}
To show (a), it suffices to consider $f=x^k$ and $g=x^s$, with $k, s\geq 0$. Then:
\begin{align*}
 F_\alpha(fg) &= F_\alpha(x^{k+s})=\sum_{\ell=0}^{k+s-1}x^{k+s-\ell-1} \alpha x^{\ell}\\
 &=x^k\sum_{\ell=0}^{s-1}x^{s-\ell-1} \alpha x^{\ell} + \left(\sum_{\ell=0}^{k-1}x^{k-\ell-1} \alpha x^{\ell}\right)x^s\\
 &=fF_\alpha(g) + F_\alpha(f)g.
\end{align*}
This proves (a); (b) is clear and we proceed to prove (c). Again, we need only consider $\alpha\in\A$ and $f=x^k$, as above. We have:
\begin{align*}
F_\alpha(x^k) &= \sum_{\ell=0}^{k-1}x^{k-\ell-1} \alpha x^\ell= \sum_{\ell=0}^{k-1}x^{k-1} \alpha + \sum_{\ell=0}^{k-1}x^{k-\ell-1} [\alpha, x^\ell]\\
&=kx^{k-1} \alpha + \sum_{\ell=0}^{k-1}[x^{k-\ell-1} \alpha, x^\ell] \\ &\in kx^{k-1} \alpha + [\FF[x], \A] =f' \alpha + [x, \A].
\end{align*}
\end{proof}

In case $\chara(\FF)=0$, the following result completely describes the image of the map $\rD_2$. 

\begin{prop}\label{P:res:imd2char0}
The following hold:
\begin{enumerate}[label=\textup{(\alph*)}]
\item $\displaystyle \im \rD_2\subseteq \gcd(h, h')\A$.\label{P:res:imd2char0:1}
\item If $\chara(\FF)=0$ then $\displaystyle \im \rD_2= \gcd(h, h')\A$.\label{P:res:imd2char0:2}
\end{enumerate}
\end{prop}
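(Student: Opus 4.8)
For part (a), I would directly analyze the formula $\rD_2(\alpha, \beta) = [\beta, x] + [\y, \alpha] - F_\alpha(h)$ from \eqref{E:res:defD2}, showing each of the three summands lies in $\gcd(h, h')\A$. By Proposition~\ref{P:general:facts:Ah}(f) we have $[\A, \A] \subseteq h\A \subseteq \gcd(h,h')\A$, which handles the commutator terms $[\beta, x]$ and $[\y, \alpha]$. For the term $F_\alpha(h)$, Lemma~\ref{L:res:propsFalpha}(c) gives $F_\alpha(h) \in h'\alpha + [x, \A]$; again $[x,\A] \subseteq h\A \subseteq \gcd(h,h')\A$, and $h'\alpha \in h'\A \subseteq \gcd(h,h')\A$ since $\gcd(h,h')$ divides $h'$. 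Summing, $\rD_2(\alpha,\beta) \in \gcd(h,h')\A$. (One should note $\gcd(h,h')\A$ is a two-sided ideal since $\gcd(h,h')$ is a polynomial in $x$, but more carefully: it is a left ideal, and the containments $h\A\subseteq\gcd(h,h')\A$ need $\gcd(h,h')\A$ to absorb on the relevant side — since $h = \gcd(h,h')\cdot k$ for some $k\in\FF[x]$ and $\FF[x]$ is central-ish only up to commutators, I would phrase everything in terms of the left $\FF[x]$-module structure using Proposition~\ref{P:general:facts:Ah}(b), so $h\A = \gcd(h,h')k\A \subseteq \gcd(h,h')\A$ as left $\FF[x]$-submodules.)

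For part (b), assuming $\chara(\FF)=0$, I need the reverse inclusion $\gcd(h,h')\A \subseteq \im\rD_2$. Write $d = \gcd(h,h')$. By Proposition~\ref{P:general:facts:Ah}(b), $\A$ is free as a left $\FF[x]$-module on $\{h^i y^i\}_{i\geq 0}$, so it suffices to show $d\cdot f \cdot h^i y^i \in \im\rD_2$ for every $f \in \FF[x]$ and $i \geq 0$. The strategy is to produce, for each such basis-type element, explicit $\alpha, \beta$ with $\rD_2(\alpha,\beta)$ equal to it modulo lower-complexity terms, then induct. The key leverage is the characteristic-zero statement in Proposition~\ref{P:general:facts:Ah}(f): $[\y, \A] = [x,\A] = h\A$. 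So taking $\alpha = 0$ and letting $\beta$ range, the term $[\beta, x]$ already sweeps out all of $h\A = -[x,\A]$; thus $h\A \subseteq \im\rD_2$. It remains to capture the part of $d\A$ not already in $h\A$, i.e.\ to hit elements $d\gamma$ where $\gamma$ is not divisible by $k = h/d$. Here I would use the $F_\alpha(h)$ term: choosing $\alpha \in \mathsf{C}_{\A_1}(x) \cap \A$ appropriately (elements of the form $g\cdot h^i y^i$ with $g\in\FF[x]$ — wait, these are not central unless... ) — more precisely, for $\alpha = h^i y^i$ which centralizes $x$ when... no: I would instead use Lemma~\ref{L:res:propsFalpha}(c), $F_\alpha(h) \equiv h'\alpha \pmod{[x,\A]}$, so that modulo $h\A \subseteq \im\rD_2$ the image of $\rD_2$ contains $h'\alpha$ for all $\alpha\in\A$, hence contains $h'\A + h\A = \gcd(h,h')\A = d\A$ (the last equality because in $\FF[x]$ one has $h'\FF[x] + h\FF[x] = d\FF[x]$, and tensoring up the free module structure: $h'\A + h\A = (h'\FF[x] + h\FF[x])\cdot\{h^iy^i\} = d\A$).

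The cleanest write-up, then, is: $\im\rD_2 \supseteq [x,\A] = h\A$ (from $\beta$-terms, using Prop.~\ref{P:general:facts:Ah}(f)); and $\im\rD_2 \ni F_\alpha(h) - [\y,\alpha] \equiv h'\alpha \pmod{h\A}$ for all $\alpha$, so $\im\rD_2 \supseteq h'\A + h\A = d\A$. Combined with part (a), equality follows. The main obstacle I anticipate is \emph{bookkeeping of sidedness}: $\gcd(h,h')\A$, $h\A$, $h'\A$ must be compared as left $\FF[x]$-submodules (or left ideals generated by central-in-$\FF[x]$ but not central-in-$\A$ elements), and the identity $h'\FF[x] + h\FF[x] = \gcd(h,h')\FF[x]$ must be promoted to $\A$ via the freeness in Proposition~\ref{P:general:facts:Ah}(b) — this is routine but must be done carefully since $x$ and $\y$ do not commute. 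The characteristic-zero hypothesis enters \emph{only} through Proposition~\ref{P:general:facts:Ah}(f)'s stronger conclusion $[x,\A] = h\A$ (in positive characteristic one only has $\subseteq$, which is exactly why part (b) can fail there), so I would flag that as the single place the hypothesis is used.
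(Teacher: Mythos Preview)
Your proposal is correct and follows essentially the same argument as the paper: both use Lemma~\ref{L:res:propsFalpha}(c) to reduce $F_\alpha(h)$ to $h'\alpha$ modulo $[x,\A]$, invoke $[\A,\A]\subseteq h\A$ for part (a), and then in characteristic zero use $[x,\A]=h\A$ from Proposition~\ref{P:general:facts:Ah}(f) to get $h\A\subseteq\im\rD_2$, whence $h'\alpha\in\im\rD_2$ for all $\alpha$, giving $h'\A+h\A=\gcd(h,h')\A\subseteq\im\rD_2$. Your worry about sidedness is harmless but unnecessary: since $\gcd(h,h')$ divides $h$, it is normal in $\A$ (any product of factors of $h$ is normal, as noted later in the paper), so $\gcd(h,h')\A$ is a two-sided ideal and the containments $h\A,\,h'\A\subseteq\gcd(h,h')\A$ are immediate.
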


\begin{proof}
It is convenient to write $\rD_2= \rD_2^1 \oplus \rD_2^2$, where 
\begin{equation}\label{E:res:splitD2}
\begin{array}{rccl}
 \rD_2^1\ : & \A  & \longrightarrow &\A \\
   &  \alpha & \mapsto  & [\y, \alpha] -F_\alpha(h)
\end{array}
\quad \mbox{and} \quad
\begin{array}{rccl}
 \rD_2^2\ : & \A  & \longrightarrow &\A \\
   &  \beta & \mapsto  & [\beta, x]
\end{array}
.
\end{equation}

Since, by Lemma~\ref{L:res:propsFalpha}\,(c), 
\begin{align*}
\rD_2^1(-\alpha) \in h'\alpha + [x, \A] + [\y, \A]
 \subseteq h'\A + h\A=\gcd(h, h')\A,
\end{align*}
for all $\alpha\in\A$, it follows that
\begin{align*}
\im \rD_2 &=\im \rD_2^1 + \im \rD_2^2
\subseteq \gcd(h, h')\A + [x, \A]\\
&\subseteq \gcd(h, h')\A + h\A
= \gcd(h, h')\A.
\end{align*}

Now assume $\chara(\FF)=0$. By Proposition~\ref{P:general:facts:Ah}, we know that $[x, \A]=[\y, \A]=h\A$ and thus $\im \rD_2^2=[x, \A]=h\A$, which implies that $h\A\subseteq\im \rD_2$. Hence, we proceed to show that also $h'\A\subseteq\im \rD_2$. For $\alpha\in\A$, we have seen that 
\begin{equation*}
\rD_2^1(-\alpha)-h'\alpha\in[\alpha, \y]+ [x, \A]\subseteq h\A\subseteq \im \rD_2.
\end{equation*}
Also, $\rD_2^1(-\alpha)\in\im \rD_2$, so it follows that $h'\alpha\in\im \rD_2$. Hence, $\gcd(h, h')\A=h'\A + h\A\subseteq \im \rD_2$ and the equality holds, by (a).
\end{proof}

\begin{cor}\label{C:res:HH2inchar0}
Assume  $\chara(\FF)=0$. There are isomorphisms 
\begin{equation*}
\hoch^2(\A) \cong \A/\gcd(h, h')\A\cong \D[\y],
\end{equation*}
where $\D=\left(\FF[x]/\gcd(h, h')\FF[x]\right)$.
In particular, $\hoch^2(\A)=0$ if and only if $\gcd(h, h')=1$, i.e., if and only if $h$ is a separable polynomial; otherwise, $\hoch^2(\A)$ is infinite dimensional.
\end{cor}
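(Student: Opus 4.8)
The plan is to deduce everything from Proposition~\ref{P:res:imd2char0}\,(b) together with the structural description of $\A$ as an $\FF[x]$-module in Proposition~\ref{P:general:facts:Ah}\,(b). First I would record the identification $\hoch^2(\A)\cong\A/\im\rD_2$ coming from the resolution, which in characteristic zero becomes $\A/\gcd(h,h')\A$ by the Proposition. The point is then to compute this quotient explicitly.

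Next I would exploit that $\A$ is a free left $\FF[x]$-module on the basis $\{h^iy^i\}_{i\ge 0}$, or equivalently (after relabelling via $\y=hy$) a free left $\FF[x]$-module on $\{\y^i\}_{i\ge 0}$; either way $\A\cong\bigoplus_{i\ge 0}\FF[x]\,\y^i$ as a left $\FF[x]$-module. Writing $g=\gcd(h,h')$, which lies in $\FF[x]$, the submodule $g\A$ is $\bigoplus_{i\ge 0}g\FF[x]\,\y^i$. Here one must be slightly careful: $g\A$ a priori means the two-sided ideal generated by $g$, but since $g\in\FF[x]$ and $g$ divides $h$, one checks that $g$ is a normal element (indeed $\A g=g\A$, using $\y g - g\y = g'h\in g\FF[x]\subseteq g\A$ via $g\mid h$), so $g\A=\A g$ really is the left $\FF[x]$-submodule $\bigoplus_i g\FF[x]\,\y^i$ and the quotient is a left $\FF[x]$-module. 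Hence
\begin{equation*}
\hoch^2(\A)\cong\A/g\A\cong\bigoplus_{i\ge 0}\bigl(\FF[x]/g\FF[x]\bigr)\y^i=\D[\y],
\end{equation*}
with $\D=\FF[x]/g\FF[x]$; the symbol $\D[\y]$ just denotes the free left $\D$-module on the powers of $\y$.

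Finally I would read off the dimension count. One has $g=\gcd(h,h')=1$ precisely when $\D=0$, i.e. $\hoch^2(\A)=0$; and over a field of characteristic zero $\gcd(h,h')=1$ is exactly the condition that $h$ be separable (squarefree), since a repeated irreducible factor of $h$ divides $h'$ and, conversely, a common factor forces a repeated root in a splitting field — this is where $\chara(\FF)=0$ is used, to guarantee $h'\ne0$ and that $\deg h'=\deg h-1<\deg h$. If $h$ is not separable then $\D\ne0$, so $\D$ has positive (finite) $\FF$-dimension $\deg g$ and $\D[\y]=\bigoplus_{i\ge0}\D\y^i$ is infinite dimensional over $\FF$. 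The main obstacle, such as it is, is purely bookkeeping: verifying that $g\A$ coincides with the obvious left $\FF[x]$-submodule $g\FF[x]\otimes\FF[\y]$ rather than something larger, which rests on the normality of $g$ and on $g\mid h$; everything else is immediate from the cited results.
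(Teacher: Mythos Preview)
Your argument is correct and matches the paper's implicit reasoning: the corollary is stated there without proof, as an immediate consequence of Proposition~\ref{P:res:imd2char0}\,(b) and the free $\FF[x]$-module structure of $\A$, and you have simply written out those details. One small slip: in the paper $\y=yh$, not $hy$, so $\y^i$ is not literally $h^iy^i$ but rather $h^iy^i$ plus lower-order terms in $y$; the change of basis from $\{h^iy^i\}$ to $\{\y^i\}$ is therefore unitriangular over $\FF[x]$ rather than the identity, which still gives $\A=\bigoplus_{i\ge0}\FF[x]\,\y^i$ as you need.
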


Let us now consider the case $\chara(\FF)=p>0$. Suppose first that $h\in\FF[x^p]$, a central polynomial. This is a particularly interesting case, not only because it includes the Weyl algebra $\A_1$, but also since $\A_h$ is Calabi-Yau if and only if $h$ is central. Indeed, more generally, $\A_h$ is twisted Calabi-Yau with Nakayama automorphism satisfying $x\mapsto x$, $\y\mapsto\y+h'$, a fact which can be derived from \cite[Rmk.\ 3.4, (2.10)]{LLW14}.

Although we can retrieve the following result from Theorem~\ref{T:res:HH2charp} below, we think this particular case helps set the stage for our general result and offers a more concrete example.

\begin{prop}\label{P:res:HH2hcentral}
Assume  $\chara(\FF)=p>0$ and $0\neq h\in\FF[x^p]$. Then $\im \rD_2=[x, \A]+[\y, \A]$. Thus:
\begin{align*}
 \hoch^2(\A) \cong  \bigoplus_{\substack{i, j=0\\ (i, j)\neq (p-1, p-1)}}^{p-1} \left(\cent{h}/ h\cent{h}\right) x^ih^jy^j\oplus \mathsf{Z}(\A) x^{p-1}h^{p-1}y^{p-1},
\end{align*}
as $\mathsf{Z}(\A)$-modules.

In particular, in case $h=1$ we obtain $\hoch^2(\A_1) \cong \mathsf{Z}(\A_1) x^{p-1}y^{p-1}$, a rank-one module over $\mathsf{Z}(\A_1)=\FF[x^p, y^p]$.
\end{prop}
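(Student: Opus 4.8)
The plan is to establish the single equality $\im \rD_2 = [x,\A]+[\y,\A]$; once this is in hand, the description of $\hoch^2(\A)=\A/\im\rD_2$ as a $\cent{h}$-module follows immediately from Lemma~\ref{L:comm:adxpy}\,(c), which gives the complementary direct summand $h\cent{h}x^{p-1}h^{p-1}y^{p-1}$ inside $h\A$, together with the decomposition $\A=\bigoplus_{i,j=0}^{p-1}\cent{h}x^ih^jy^j$ from Proposition~\ref{P:general:facts:Ah}\,(e); and the $h=1$ specialization is the case $p-1$ being the only relevant index since $h\cent{h}=\cent{1}$ when $h=1$, wait—actually when $h=1$ every summand with $(i,j)\neq(p-1,p-1)$ collapses because $\cent{h}/h\cent{h}=\cent{1}/\cent{1}=0$, leaving only the rank-one piece $\cent{1}x^{p-1}y^{p-1}$. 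So the whole proposition reduces to one containment in each direction.

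For the inclusion $\im\rD_2 \subseteq [x,\A]+[\y,\A]$: recall $\rD_2(\alpha,\beta)=[\beta,x]+[\y,\alpha]-F_\alpha(h)$. The terms $[\beta,x]\in[x,\A]$ and $[\y,\alpha]\in[\y,\A]$ are visibly in the target, so the only thing to check is that $F_\alpha(h)\in[x,\A]+[\y,\A]$ when $h\in\FF[x^p]$. By Lemma~\ref{L:res:propsFalpha}\,(c), $F_\alpha(h)\in h'\alpha+[x,\A]$, and since $h\in\FF[x^p]$ and $\chara(\FF)=p$ we have $h'=0$; hence $F_\alpha(h)\in[x,\A]$. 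This gives the inclusion. For the reverse inclusion $[x,\A]+[\y,\A]\subseteq\im\rD_2$: from $\rD_2^2(\beta)=[\beta,x]$ we get $[x,\A]\subseteq\im\rD_2$ directly (note $[\A,x]=-[x,\A]=[x,\A]$ as a subspace). For $[\y,\A]$: given $\alpha\in\A$, we have $\rD_2(-\alpha,0)=[\y,-\alpha]-F_{-\alpha}(h)=-[\y,\alpha]+F_\alpha(h)$, and we just showed $F_\alpha(h)\in[x,\A]\subseteq\im\rD_2$; therefore $[\y,\alpha]=F_\alpha(h)-\rD_2(-\alpha,0)\in\im\rD_2$. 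Thus $[x,\A]+[\y,\A]\subseteq\im\rD_2$, and equality holds.

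Finally, to read off the module structure: by Lemma~\ref{L:comm:adxpy}\,(b) we have $[x,\A]+[\y,\A]=\bigoplus_{(i,j)\neq(p-1,p-1)}\cent{h}hx^ih^jy^j$, so quotienting $\A=\bigoplus_{i,j=0}^{p-1}\cent{h}x^ih^jy^j$ by this submodule leaves, for each $(i,j)\neq(p-1,p-1)$, a copy of $\cent{h}x^ih^jy^j/\cent{h}hx^ih^jy^j\cong \cent{h}/h\cent{h}$ (using that $x^ih^jy^j$ generates a free rank-one $\cent{h}$-submodule, so multiplication by $h$ is injective on it), and for $(i,j)=(p-1,p-1)$ the full free summand $\cent{h}x^{p-1}h^{p-1}y^{p-1}$ survives untouched. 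This yields exactly the stated direct sum decomposition. I do not anticipate a serious obstacle here; the one point requiring a little care is the freeness/flatness bookkeeping needed to identify $\cent{h}x^ih^jy^j/\cent{h}hx^ih^jy^j$ with $\cent{h}/h\cent{h}$ rather than something smaller, but this is immediate from Proposition~\ref{P:general:facts:Ah}\,(e), which says the ambient decomposition of $\A$ over $\cent{h}$ is a genuine (free) direct sum, so each graded piece is a free rank-one $\cent{h}$-module and the quotient is as claimed.
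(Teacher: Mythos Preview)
Your proposal is correct and follows essentially the same approach as the paper: both arguments use Lemma~\ref{L:res:propsFalpha}\,(c) together with $h'=0$ to show $F_\alpha(h)\in[x,\A]$, obtain the two inclusions for $\im\rD_2=[x,\A]+[\y,\A]$ in the same way, and then invoke Lemma~\ref{L:comm:adxpy}\,(b) and Proposition~\ref{P:general:facts:Ah}\,(e) for the $\cent{h}$-module description. Your write-up is somewhat more explicit about the freeness bookkeeping in the final step, but the underlying argument is identical.
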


\begin{proof}
We continue to use the maps $\rD_2^1$ and $\rD_2^2$ defined in \eqref{E:res:splitD2}.
For $\alpha\in\A$ we have
\begin{equation}\label{E:res:HH2hcentral}
\rD_2^1(\alpha)=[\y, \alpha] -F_\alpha(h)=[\y, \alpha] -h'\alpha -\Theta_\alpha=[\y, \alpha]  -\Theta_\alpha,
\end{equation}
for some $\Theta_\alpha\in[x, \A]=\im \rD_2^2$. Thus, $\im \rD_2^1\subseteq [x, \A]+[\y, \A]$ and there are inclusions $[x, \A]\subseteq \im \rD_2=\im \rD_2^1 + \im \rD_2^2\subseteq [x, \A]+[\y, \A]$. Conversely, by \eqref{E:res:HH2hcentral} we also have that $[\y, \alpha]=\rD_2^1(\alpha)+\Theta_\alpha\in\im \rD_2^1 + \im \rD_2^2=\im \rD_2$, so $[\y, \A]\subseteq\im \rD_2$, yielding the equality $\im \rD_2=[x, \A]+[\y, \A]$. 

The expression for $\A/\im \rD_2$ then comes from Lemma~\ref{L:comm:adxpy}\,(b) and Proposition~\ref{P:general:facts:Ah}.
\end{proof}

We now tackle the general case for $0\neq h\in\FF[x]$, which is a bit more intricate than the particular case studied above. Consider the decomposition $\A=\mathcal{I}\oplus\mathcal{J}$, where 
\begin{equation}\label{E:res:def:I:J}
\mathcal{I}=\mathsf{C}_{\A}(x)h^{p-1}y^{p-1} \quad \mbox{and} \quad \mathcal{J}=\bigoplus_{j=0}^{p-2} \mathsf{C}_{\A}(x)h^jy^j.
\end{equation}
Thus, $\im \rD_2^1=\im \rD_2^1|_{\mathcal{I}} + \im \rD_2^1|_{\mathcal{J}}$. Also, by \cite[Lem.\ 6.3\,(b)]{BLO15tams}, $\im \rD_2^2=[x, \A]=h\mathcal{J}$.

We wish to show that 
\begin{equation}\label{E:res:imrestJ}
\im \rD_2^1|_{\mathcal{J}} + \im \rD_2^2=h\mathcal{J} + h'\mathcal{J}=\gcd(h, h')\mathcal{J}.
\end{equation}
Let $\alpha\in\mathcal{J}$. Then $[\y, \alpha]\in[x, \A]=h\mathcal{J}$, by Lemma~\ref{L:comm:adxpy}\,(a). As in \eqref{E:res:HH2hcentral}, $\rD_2^1(\alpha)=[\y, \alpha] -h'\alpha -\Theta_\alpha$ for some $\Theta_\alpha\in[x, \A]=h\mathcal{J}$. Thus, $\im \rD_2^1|_{\mathcal{J}}\subseteq h\mathcal{J} + h'\mathcal{J}$; moreover, $h'\alpha=-\rD_2^1(\alpha)+[\y, \alpha] -\Theta_\alpha  \in\im \rD_2^1|_{\mathcal{J}}+\im \rD_2^2$, and \eqref{E:res:imrestJ} is established.

So it remains to determine the image of $\rD_2^1|_{\mathcal{I}}$. Let $\alpha\in\mathcal{I}$. Without loss of generality, we can assume that $\alpha=zfh^{p-1}y^{p-1}$ with $z\in\mathsf{Z}(\A)$ and $f\in\FF[x]$. Then, using Lemma~\ref{L:comm:adxpy}\,(a), we have 
\begin{equation}\label{E:res:computD21I}
\begin{split}
\rD_2^1(\alpha)&=[\y, zfh^{p-1}y^{p-1}] -F_\alpha(h)\\
&=zf'hh^{p-1}y^{p-1}-zh'fh^{p-1}y^{p-1}-\Theta_\alpha\\
&=z(f'h-h'f)h^{p-1}y^{p-1}-\Theta_\alpha,
\end{split}
\end{equation}
with $\Theta_\alpha\in[x, \A]=h\mathcal{J}$. 

Define the map 
\begin{equation}\label{E:res:defkappa}
\varkappa=\varkappa_h:\FF[x]\longrightarrow\FF[x], \quad \varkappa(g)=g'h-h'g.
\end{equation}
By \cite[Lem.\ 4.28\,(d)]{BLO15ja}, we know that $\ker\varkappa=\FF[x^p]\left(h/\varrho_h\right)$, where $\varrho_h$ is the unique monic polynomial in $\FF[x^p]$ of maximal degree dividing $h$ (see \cite[Definition 2.14]{BLO15ja} for a detailed description of $\varrho_h$). Since $\varkappa$ is clearly $\FF[x^p]$-linear and $\FF[x]$ is free of rank $p$ over the hereditary algebra $\FF[x^p]$, we conclude that $\mathcal{K}:=\im \varkappa$ is a free $\FF[x^p]$-submodule of $\FF[x]$ of rank $p-1$. 

From the above and \eqref{E:res:computD21I} we can conclude that $\im \rD_2^1|_{\mathcal{I}}+\im \rD_2^2=h\mathcal{J}\oplus\mathsf{Z}(\A)\mathcal{K}h^{p-1}y^{p-1}$ and finally that
\begin{equation}\label{E:res:imD2charp}
\im \rD_2=\gcd(h, h')\mathcal{J}\oplus\mathsf{Z}(\A)\mathcal{K}h^{p-1}y^{p-1}.
\end{equation}
Thence, we obtain a description of $\hoch^2(\A)$ in positive characteristic.

\begin{thm}\label{T:res:HH2charp}
Assume  $\chara(\FF)=p>0$. Then the image of the map $\rD_2$ defined in \eqref{E:res:defD2} is $\im \rD_2=\gcd(h, h')\mathcal{J}\oplus\mathsf{Z}(\A)\mathcal{K}h^{p-1}y^{p-1}$, where 
$\mathcal{J}$ and $\varkappa$ are given in \eqref{E:res:def:I:J} and \eqref{E:res:defkappa}, respectively, and $\mathcal{K}$ is the image of $\varkappa$. Thus:
\begin{align*}
 \hoch^2(\A) \cong \mathcal{J}/\gcd(h, h')\mathcal{J}  \oplus \left(\mathsf{C}_{\A}(x)/\mathsf{Z}(\A)\mathcal{K}\right) h^{p-1}y^{p-1},
\end{align*}
as $\mathsf{Z}(\A)$-modules. In particular, $\hoch^2(\A)$ is nonzero for all $0\neq h\in\FF[x]$.
\end{thm}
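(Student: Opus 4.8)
The bulk of the argument has in fact already been carried out above: the identity $\im\rD_2=\gcd(h,h')\mathcal{J}\oplus\mathsf{Z}(\A)\mathcal{K}h^{p-1}y^{p-1}$ is precisely \eqref{E:res:imD2charp}, obtained by combining the computation of $\rD_2^1|_{\mathcal{J}}$ in \eqref{E:res:imrestJ}, the computation of $\rD_2^1|_{\mathcal{I}}$ in \eqref{E:res:computD21I}, the equality $\im\rD_2^2=[x,\A]=h\mathcal{J}$, and the freeness of $\mathcal{K}=\im\varkappa$ as an $\FF[x^p]$-module of rank $p-1$. So the plan is to package this into the stated form and then read off the nonvanishing.

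First I would invoke $\hoch^2(\A)\cong\A/\im\rD_2$ together with the decomposition $\A=\mathcal{I}\oplus\mathcal{J}$ of \eqref{E:res:def:I:J}, observing that the two summands of $\im\rD_2$ respect it: $\gcd(h,h')\mathcal{J}\subseteq\mathcal{J}$, while $\mathsf{Z}(\A)\mathcal{K}h^{p-1}y^{p-1}\subseteq\mathsf{C}_{\A}(x)h^{p-1}y^{p-1}=\mathcal{I}$, since $\mathcal{K}\subseteq\FF[x]\subseteq\mathsf{C}_{\A}(x)$ and $\mathsf{Z}(\A)$ is central. Hence, as $\mathsf{Z}(\A)$-modules,
\[
\hoch^2(\A)\cong\A/\im\rD_2\cong\bigl(\mathcal{J}/\gcd(h,h')\mathcal{J}\bigr)\oplus\bigl(\mathcal{I}/\mathsf{Z}(\A)\mathcal{K}h^{p-1}y^{p-1}\bigr).
\]
For the second summand I would then use $\A=\bigoplus_{j=0}^{p-1}\mathsf{C}_{\A}(x)h^jy^j$ from Proposition~\ref{P:general:facts:Ah}\,(e): right multiplication by $h^{p-1}y^{p-1}$ is an injective $\mathsf{Z}(\A)$-module map $\mathsf{C}_{\A}(x)\to\mathcal{I}$ carrying $\mathsf{Z}(\A)\mathcal{K}$ onto $\mathsf{Z}(\A)\mathcal{K}h^{p-1}y^{p-1}$, so it induces $\mathcal{I}/\mathsf{Z}(\A)\mathcal{K}h^{p-1}y^{p-1}\cong\bigl(\mathsf{C}_{\A}(x)/\mathsf{Z}(\A)\mathcal{K}\bigr)h^{p-1}y^{p-1}$; substituting gives the displayed isomorphism.

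Finally, to see $\hoch^2(\A)\neq0$ I would show the second summand never vanishes, by a rank count over the domain $\mathsf{Z}(\A)=\FF[x^p][h^py^p]$. Since $\mathsf{Z}(\A)$ is free, hence flat, over $\FF[x^p]$, applying $\mathsf{Z}(\A)\ot_{\FF[x^p]}-$ to the inclusion $\mathcal{K}\subseteq\FF[x]$ of free $\FF[x^p]$-modules of ranks $p-1$ and $p$ realizes $\mathsf{Z}(\A)\mathcal{K}\subseteq\mathsf{C}_{\A}(x)=\bigoplus_{i=0}^{p-1}\mathsf{Z}(\A)x^i$ as an inclusion of free $\mathsf{Z}(\A)$-modules of ranks $p-1$ and $p$; since rank is preserved under equality of submodules of a free module over a domain, this inclusion is proper, so $\mathsf{C}_{\A}(x)/\mathsf{Z}(\A)\mathcal{K}\neq0$ and therefore $\hoch^2(\A)\neq0$. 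I do not expect a genuine obstacle: all the delicate work---the description of $\im\rD_2$ and the freeness of $\mathcal{K}$ over $\FF[x^p]$---is already in place, and the only point requiring care is verifying that extending $\mathcal{K}$ from $\FF[x^p]$- to $\mathsf{Z}(\A)$-coefficients does not collapse its rank, which is exactly what makes $\hoch^2(\A)$ nonzero even when $h$ is separable.
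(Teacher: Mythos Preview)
Your proposal is correct and follows essentially the same approach as the paper: the description of $\im\rD_2$ is exactly the content of the discussion culminating in \eqref{E:res:imD2charp}, and you correctly read off the $\mathsf{Z}(\A)$-module decomposition of $\hoch^2(\A)$ by observing that the two summands of $\im\rD_2$ respect the splitting $\A=\mathcal{I}\oplus\mathcal{J}$. The paper leaves the nonvanishing claim to the reader, and your rank-count argument over the domain $\mathsf{Z}(\A)$---using flatness of $\mathsf{Z}(\A)$ over $\FF[x^p]$ to identify $\mathsf{Z}(\A)\mathcal{K}$ as free of rank $p-1$ inside the free rank-$p$ module $\mathsf{C}_{\A}(x)$---is a clean way to make it explicit.
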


\begin{remark}
Suppose that in Theorem~\ref{T:res:HH2charp} we take $0\neq h\in\FF[x^p]$. Then $\gcd(h, h')=h$ and $\mathcal{K}=h\, \im\frac{d}{dx}=\bigoplus_{i=0}^{p-2}\FF[x^p]hx^i$, so that 
\begin{align*}
\im \rD_2 &= h\mathcal{J} \oplus \bigoplus_{i=0}^{p-2}\mathsf{Z}(\A)hx^i h^{p-1}y^{p-1}=[x, \A]+[\y, \A],
\end{align*}
by Lemma~\ref{L:comm:adxpy}\,(b), in agreement with the statements in Proposition~\ref{P:res:HH2hcentral}. 
\end{remark}

\begin{exams} Let $\chara(\FF)=p>0$.
\begin{enumerate}[label=\textup{(\alph*)}]
\item In case $h=1$, then $\A_1$ is the Weyl algebra and, as observed in Proposition~\ref{P:res:HH2hcentral}, $\hoch^2(\A_1) \cong \mathsf{Z}(\A_1) x^{p-1}y^{p-1}$ is a rank-one free module over $\mathsf{Z}(\A_1)=\FF[x^p, y^p]$. It was shown in \cite[Thm.\ 3.8]{BLO15ja} that $\hoch^1(\A_1)$ is a rank-two free module over $\mathsf{Z}(\A_1)$.

\item In case $h=x$, then $\A_x$ is the universal enveloping algebra of the two-dimensional non-abelian Lie algebra. We have $\gcd(h, h')=1$ so that $\mathcal{J}/\gcd(h, h')\mathcal{J}=0$. By computing the image under $\varkappa$ of the $\FF[x^p]$-basis $\{x^i\mid 0\leq i\leq p-1\}$ of $\FF[x]$ we easily see that 
\begin{equation*}
\mathsf{Z}(\A_x)\mathcal{K}=\mathsf{Z}(\A_x)\oplus\bigoplus_{i=2}^{p-1}\mathsf{Z}(\A_x)x^i.
\end{equation*}
Hence, Theorem~\ref{T:res:HH2charp} yields 
\begin{equation*}
\hoch^2(\A_x) \cong  \mathsf{Z}(\A_x)x^{p}y^{p-1},
\end{equation*}
again a free rank-one module over $\mathsf{Z}(\A_x)=\FF[x^p, x^py^p]$.

\item Assume $h=x^2$. Then $\A_{x^2}$ is the Jordan plane. We distinguish between two cases:
\begin{itemize}
\item {\bf Case 1: $p=2$}. \\ In this case $x^2$ is central and we use Proposition~\ref{P:res:HH2hcentral} to obtain the isomorphism
\begin{equation*}
\hoch^2(\A_{x^2}) \cong \D \oplus \D x\oplus \D x^2y\oplus \mathsf{Z}(\A_{x^2})x^3y,
\end{equation*}
where $\mathsf{Z}(\A_{x^2})=\FF[x^2, x^4y^2]$ and $\D=\mathsf{Z}(\A_{x^2})/x^2\mathsf{Z}(\A_{x^2})$.

\item {\bf Case 2: $p>2$}. \\ In this case $x^2$ is not central and we use Theorem~\ref{T:res:HH2charp}. Since $\gcd(h, h')=x$ and $\mathsf{C}_{\A_{x^2}}(x)/x\mathsf{C}_{\A_{x^2}}(x)\cong \mathsf{Z}(\A_{x^2})/x^p\mathsf{Z}(\A_{x^2})$, we can conclude that $\mathcal{J}/\gcd(h, h')\mathcal{J}\cong\bigoplus_{j=0}^{p-2} \left(\mathsf{Z}(\A_{x^2})/x^p\mathsf{Z}(\A_{x^2})\right)h^jy^j$. Finally, as in the case $h=x$, it is easy to see that 
\begin{equation*}
\mathsf{Z}(\A_{x^2})\mathcal{K}=\bigoplus_{i=1}^{2}\mathsf{Z}(\A_{x^2})x^i\oplus\bigoplus_{i=4}^{p}\mathsf{Z}(\A_{x^2})x^i,
\end{equation*}
where the last summand is zero in case $p=3$. Hence Theorem~\ref{T:res:HH2charp} gives 
\begin{equation*}
\hoch^2(\A_{x^2}) \cong \D \oplus \D x^{2}y \oplus \mathsf{Z}(\A_{x^2})x^{4}y^{2},\quad \mbox{in case $p=3$, and}
\end{equation*}
\begin{align*}
\hoch^2(\A_{x^2}) &\cong  \bigoplus_{j=0}^{p-2} \D x^{2j}y^j \oplus \D x^{2(p-1)}y^{p-1}\oplus \mathsf{Z}(\A_{x^2})x^{2p+1}y^{p-1}\\
&=\bigoplus_{j=0}^{p-1} \D x^{2j}y^j \oplus \mathsf{Z}(\A_{x^2})x^{2p+1}y^{p-1},
\end{align*}
for all primes $p>3$, where $\mathsf{Z}(\A_{x^2})=\FF[x^p, x^{2p}y^p]$ and $\D=\mathsf{Z}(\A_{x^2})/x^p\mathsf{Z}(\A_{x^2})$.
\end{itemize}
Notice that in all cases, $\hoch^2(\A_{x^2})$ is not a free module over $\mathsf{Z}(\A_{x^2})$, although it is composed of a torsion summand and a free summand of rank one.
\end{enumerate}
\end{exams}

We have seen in the examples that, in general, $\hoch^2(\A)$ is not a free module over $\cent{}$. The next theorem provides a necessary and sufficient condition for $\hoch^2(\A)$ to be free.

\begin{thm}\label{T:free:char:p}
Assume  $\chara(\FF)=p>0$. Then $\hoch^2(\A)$ is a free $\mathsf{Z}(\A)$-module if and only if $\gcd(h, h')=1$. In this case, $\hoch^2(\A)$ has rank one over $\mathsf{Z}(\A)$ and, moreover, $\hoch^\bullet(\A)$ is a free $\mathsf{Z}(\A)$-module.
\end{thm}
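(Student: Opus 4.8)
The plan is to rely on Theorem~\ref{T:res:HH2charp}, which gives the decomposition $\hoch^2(\A)\cong \mathcal{J}/\gcd(h,h')\mathcal{J}\oplus\left(\mathsf{C}_{\A}(x)/\mathsf{Z}(\A)\mathcal{K}\right)h^{p-1}y^{p-1}$ as $\mathsf{Z}(\A)$-modules, and to analyze each summand separately. First I would dispose of the ``if'' direction: when $\gcd(h,h')=1$, the first summand $\mathcal{J}/\gcd(h,h')\mathcal{J}=\mathcal{J}/\mathcal{J}=0$ vanishes outright, so $\hoch^2(\A)\cong\left(\mathsf{C}_{\A}(x)/\mathsf{Z}(\A)\mathcal{K}\right)h^{p-1}y^{p-1}$. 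Then it remains to show this quotient is free of rank one over $\mathsf{Z}(\A)$. Using Proposition~\ref{P:general:facts:Ah}\,(d)--(e), $\mathsf{C}_{\A}(x)$ is free over $\mathsf{Z}(\A)$ with basis $\{x^i\}_{0\le i\le p-1}$, while $\mathcal{K}=\im\varkappa$ is a free $\FF[x^p]$-submodule of $\FF[x]$ of rank $p-1$ (established in the text preceding Theorem~\ref{T:res:HH2charp}); since $\gcd(h,h')=1$, one checks (as in the $h=x$ example) that $\mathsf{Z}(\A)\mathcal{K}$ is a free $\mathsf{Z}(\A)$-submodule of $\mathsf{C}_{\A}(x)$ of corank one, i.e.\ $\mathsf{C}_{\A}(x)/\mathsf{Z}(\A)\mathcal{K}$ is free of rank one. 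Concretely one exhibits a monomial $x^{i_0}$ (for the $h=x$ case, $i_0=p$ after reduction; in general the missing coordinate of $\mathcal{K}$ inside $\FF[x]/x^p\FF[x]$—note $\mathcal{K}$ lies in $\im\frac{d}{dx}$-type directions when $h$ is coprime to $h'$ only in spirit, so the careful point is to nail which residue class is missing) whose image spans the quotient freely.

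For the ``only if'' direction, I would show that if $\gcd(h,h')\neq 1$ then $\hoch^2(\A)$ has nonzero torsion over $\mathsf{Z}(\A)$, hence is not free. The first summand $\mathcal{J}/\gcd(h,h')\mathcal{J}=\bigoplus_{j=0}^{p-2}\left(\mathsf{C}_{\A}(x)/\gcd(h,h')\mathsf{C}_{\A}(x)\right)h^jy^j$ is visibly nonzero and killed by $\gcd(h,h')\in\mathsf{Z}(\A)$ — wait, $\gcd(h,h')$ need not lie in $\mathsf{Z}(\A)=\FF[x^p,h^py^p]$; instead one uses that $\gcd(h,h')^p\in\FF[x^p]\subseteq\mathsf{Z}(\A)$ annihilates it, so the summand is $\mathsf{Z}(\A)$-torsion and nonzero. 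A nonzero torsion submodule obstructs freeness. (One should also confirm the summand is genuinely nonzero: $\deg\gcd(h,h')\ge 1$ forces $\mathsf{C}_{\A}(x)/\gcd(h,h')\mathsf{C}_{\A}(x)\neq 0$ via Proposition~\ref{P:general:facts:Ah}\,(e).)

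For the final assertion—that $\gcd(h,h')=1$ forces the whole graded module $\hoch^\bullet(\A)=\bigoplus_{n\ge0}\hoch^n(\A)$ to be $\mathsf{Z}(\A)$-free—I would assemble the degreewise information: $\hoch^0(\A)=\mathsf{Z}(\A)$ is free of rank one by definition; $\hoch^n(\A)=0$ for $n\ge 3$ by the resolution \eqref{E:res:K}; $\hoch^2(\A)$ is free of rank one by the first two paragraphs; and $\hoch^1(\A)\cong\ker\rD_2/\im\rD_1$ must be handled via the exact sequence $0\to\hoch^0(\A)\to\A\xrightarrow{\rD_1}\A\oplus\A\xrightarrow{\rD_2}\A\to\hoch^2(\A)\to 0$ of $\mathsf{Z}(\A)$-modules. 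Since $\hoch^2(\A)$ is $\mathsf{Z}(\A)$-free, the surjection $\A\twoheadrightarrow\hoch^2(\A)$ splits, so $\im\rD_2$ is a direct summand of $\A$ (which is $\mathsf{Z}(\A)$-free by Proposition~\ref{P:general:facts:Ah}\,(e)), hence projective, hence free (finitely generated projective over the polynomial ring $\mathsf{Z}(\A)$, by Quillen--Suslin, are free); then $\ker\rD_2$ is the kernel of a surjection of free modules, and breaking the four-term sequence into short exact sequences shows each of $\ker\rD_2$, $\im\rD_1$, and finally $\hoch^1(\A)=\ker\rD_2/\im\rD_1$ is free. Alternatively, and more in keeping with the paper's style, one invokes \cite[Thm.\ 3.8]{BLO15ja} or its generalization for the explicit freeness of $\hoch^1(\A)$ when $\gcd(h,h')=1$, but the homological splitting argument is cleaner and self-contained.

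The main obstacle I expect is the rank-one freeness of $\mathsf{C}_{\A}(x)/\mathsf{Z}(\A)\mathcal{K}$: one must pin down precisely the $\FF[x^p]$-submodule $\mathcal{K}=\im\varkappa$ inside $\FF[x]$ when $\gcd(h,h')=1$ and verify it has a free complement of rank one as an $\FF[x^p]$-module (equivalently that the $p\times p$ ``matrix of $\varkappa$'' over $\FF[x^p]$, in the basis $\{x^i\}$, has a $1\times 1$ cokernel that is free). This is where the hypothesis $\gcd(h,h')=1$ does real work—it is exactly what forces $\varkappa$ to have no extra degeneracies beyond the unavoidable rank drop coming from $\ker\varkappa=\FF[x^p](h/\varrho_h)$, and when $\gcd(h,h')=1$ one has $\varrho_h=1$, so $\ker\varkappa=\FF[x^p]h$ and a dimension/degree count over $\FF[x^p]$ pins the cokernel down. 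Making this count clean, rather than case-by-case, is the delicate part.
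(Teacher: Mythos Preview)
Your ``only if'' direction is correct and matches the paper: the summand $\mathcal{J}/\gcd(h,h')\mathcal{J}$ is nonzero when $\gcd(h,h')\neq 1$ and is annihilated by the central element $(\gcd(h,h'))^p$, so $\hoch^2(\A)$ has torsion.

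The gap is in your ``if'' direction. You correctly reduce to showing that $\FF[x]/\mathcal{K}$ is free of rank one over $\FF[x^p]$, and you correctly observe that $\ker\varkappa=\FF[x^p]h$ (indeed $\varrho_h=1$ follows from $\gcd(h,h')=1$). But a ``dimension/degree count'' does not pin down the cokernel: over the PID $\FF[x^p]$, knowing $\mathcal{K}$ has rank $p-1$ only tells you $\FF[x]/\mathcal{K}$ has rank~$1$, not that it is torsion-free. The issue is precisely to rule out a torsion summand, and this is where the argument has real content. The paper proceeds by proving directly that $\FF[x]/\mathcal{K}$ is torsion-free: given $0\neq\omega\in\FF[x^p]$ and $f\in\FF[x]$ with $\omega f=\varkappa(g)$, one must show $f\in\mathcal{K}$. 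The paper does this by showing $g\in\omega\FF[x]+h\FF[x^p]$, which is established in two steps. First, $g\in\omega\FF[x]+h\FF[x]$: setting $t=\gcd(\omega,h)$, the relation $\omega f=g'h-h'g$ gives $h'g\in t\FF[x]$, and since $t\mid h$ and $\gcd(h,h')=1$ one gets $t\mid g$. Second, writing $g=\omega q+rh$ and reducing to $g=rh$ with $r\in\bigoplus_{i=1}^{p-1}\FF[x^p]x^i$, one must show $\omega\mid rh$; here $\varkappa(rh)=r'h^2$, and the paper defers the remaining divisibility argument to \cite[Lem.~6.28\,(iv)]{BLO15ja}. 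None of this is captured by a rank count, and the hypothesis $\gcd(h,h')=1$ is used in an essential, non-numerical way in the first step.

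A smaller point: your homological splitting argument for $\hoch^1(\A)$ does not quite close. You obtain that $\ker\rD_2$ is free (via Quillen--Suslin), but from $0\to\im\rD_1\to\ker\rD_2\to\hoch^1(\A)\to 0$ with $\ker\rD_2$ free you cannot conclude $\hoch^1(\A)$ is free without first knowing it is projective; and from $0\to\mathsf{Z}(\A)\to\A\to\im\rD_1\to 0$ you only get $\mathrm{pd}(\im\rD_1)\le 1$, not freeness. The paper simply invokes \cite[Thm.~6.29]{BLO15ja} for this step, which is the generalization you allude to.
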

\begin{proof}
The last statement follows from the first by \cite[Thm.\ 6.29]{BLO15ja}, so we need only focus on $\hoch^2(\A)$.

The condition $\gcd(h, h')=1$ is necessary, as otherwise  $\mathcal{J}/\gcd(h, h')\mathcal{J}$ would be nonzero and annihilated by the central element $(\gcd(h, h'))^p$. Next we prove that it is sufficient.

Suppose $\gcd(h, h')=1$. Then $\hoch^2(\A) \cong \left(\mathsf{C}_{\A}(x)/\mathsf{Z}(\A)\mathcal{K}\right) h^{p-1}y^{p-1}$ and, since $\mathsf{C}_{\A}(x)=\mathsf{Z}(\A)\FF[x]$, it suffices to prove that $\mathcal{K}$ is a direct summand of $\FF[x]$, as $\FF[x^p]$-modules. The latter is equivalent to showing that $\FF[x]/\mathcal{K}$ is torsion free, for then the canonical epimorphism $\FF[x]\rightarrow \FF[x]/\mathcal{K}$ will yield the decomposition $\FF[x]=\mathcal{K} \oplus\FF[x^{p}]\xi$, for some rank-one free $\FF[x^{p}]$-submodule $\FF[x^{p}]\xi$. It will follow that $\hoch^2(\A) \cong \mathsf{Z}(\A)\xi h^{p-1}y^{p-1}$, a free $\mathsf{Z}(\A)$-module of rank one. \medskip 

\textit{Claim:} {\it The $\FF[x^{p}]$-module $\FF[x]/\mathcal{K}$ is torsion free.}

\smallskip
\textit{Proof of the claim:}  
Let $0\neq\omega\in\FF[x^p]$ and $f\in\FF[x]$ be such that $\omega f\in\mathcal{K}$, say $\omega f=\varkappa(g)$. It needs to be shown that $f\in \mathcal{K}$. For such, it is enough to show that there exist $q\in\FF[x]$ and $r\in\FF[x^p]$ so that $g=\omega q+rh$. Indeed, if this is the case then 
$\omega f=\omega\varkappa(q)+r\varkappa(h)=\omega\varkappa(q)$ and it follows that $f=\varkappa(q)\in\mathcal{K}$.

\medskip 

\textit{Subclaim 1:} {\it $g\in\omega \FF[x] + h\FF[x]$.}

\smallskip
\textit{Proof of subclaim 1:} Let $t=\gcd(\omega, h)$. Then $\omega \FF[x] + h\FF[x]=t\FF[x]$ and the equality $\omega f=g'h-h'g$ implies that $h'g\in t\FF[x]$. But $t$ is a divisor of $h$ and $\gcd(h, h')=1$ so it follows that $g\in t\FF[x]$, as required.\hfill $\blacksquare$

\medskip

Take $q, r\in\FF[x]$ with $g=\omega q+rh$. Applying $\varkappa$ to this equality we obtain $\varkappa(g)=\omega \varkappa(q)+\varkappa(rh)$ and thus $\omega$ divides $\varkappa(rh)$. So if suffices to prove that if $\omega$ divides $\varkappa(rh)$ then $rh\in\omega\FF[x] + h\FF[x^p]$. In other words, we may assume without loss of generality that $g=rh$.

Write $r=r_0+r_1$, with $r_0\in\FF[x^p]$ and $r_1\in\bigoplus_{i=1}^{p-1}\FF[x^p]x^i$. As $\varkappa(rh)=\varkappa(r_1 h)$, we may assume that $r_0=0$. So, without loss of generality, we assume that $r\in\bigoplus_{i=1}^{p-1}\FF[x^p]x^i$.

\medskip 

\textit{Subclaim 2:} {\it $\omega$ divides $rh$.}

\smallskip
\textit{Proof of subclaim 2:} Note that $\varkappa(rh)=r'h^2$, so we need to show that if $\omega$ divides $r'h^2$, then $\omega$ divides $rh$. From this point on, our proof follows that of \cite[Lem.\ 6.28\, (iv)]{BLO15ja}, although the details are a bit more intricate and some modifications are needed. Thus, we suspend the proof of the subclaim here and refer the interested reader to the proof of \cite[Lem.\ 6.28\, (iv)]{BLO15ja}.\hfill $\blacksquare$

\smallskip

By the above arguments, the claim is also established, thus proving the Theorem.

\end{proof}

\section{The contracting homotopies $\s_{-1}, \s_0$ and $\s_1$}\label{S:hom}

Recall the definition of the right $\A$-module maps $\s_{-1}$ and $\s_0$, given at the beginning of Section~\ref{S:res}. In this section we prove the two final relations in \eqref{E:res:homotopies}, together with a few other useful identities. 

\begin{lemma}\label{L:sm1s0}
Let $f\in\FF[x]$, $a, b\in\A$ and $\alpha\in\A\ot \V\ot\A$. The following hold:
\begin{enumerate}[label=\textup{(\alph*)}]
\item $\s_0(fa\ot b)=f\s_0(a\ot b)+\s_0(f\ot ab)$.\label{L:sm1s0:2}
\item $\s_0(f\dd_0(\alpha))=f\s_0(\dd_0(\alpha))$.\label{L:sm1s0:4}
\end{enumerate}
\end{lemma}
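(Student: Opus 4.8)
The plan is to verify both identities by direct computation, reducing to generators of the free modules involved. For part (a), since $\s_0$ is a right $\A$-module map and both sides are additive in $a$, it suffices to check the identity when $a$ is a basis element of $\A$, say $a=x^k\y^\ell$, and when $b=1$ (the general $b$ following by right $\A$-linearity). So I would need to show $\s_0(fx^k\y^\ell\ot 1)=f\s_0(x^k\y^\ell\ot 1)+\s_0(f\ot x^k\y^\ell)$. Expanding the left-hand side using the defining formula for $\s_0$ on $x^{k'}\y^\ell\ot 1$ where $fx^k$ is a polynomial of the form $\sum c_{k'}x^{k'}$, and splitting the first summation $\sum_{i=0}^{k'-1}x^i\ot x\ot x^{k'-1-i}\y^\ell$ at the index $i=\deg$-cutoff corresponding to the factor $f$ versus the factor $x^k$, should produce exactly the two terms on the right. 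The key bookkeeping point is that $\s_0(f\ot x^k\y^\ell)$ contributes the ``low'' part of the $x$-summation (indices within the $f$-block, with tail $x^{\,\cdot}x^k\y^\ell$) while $f\s_0(x^k\y^\ell\ot 1)$ contributes the ``high'' part (the $f$ prefactor times the $x^k$-block and the entire $\y$-block). This is essentially the same Leibniz-type splitting already used for $F_\alpha$ in Lemma \ref{L:res:propsFalpha}\,(a), and I expect it to be a short induction on $\deg f$ (or just the case $f=x^m$, extended by linearity), using the evident identity $\s_0(x^m x^k\y^\ell\ot 1)=\s_0(x^{m+k}\y^\ell\ot 1)$ and resplitting.

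For part (b), the natural approach is to apply part (a). Write $\alpha$ as an $\A$-linear combination of elements $1\ot v\ot a$ with $v\in\V$, $a\in\A$; by right $\A$-linearity of both $\s_0$ and $\dd_0$ it is enough to treat $\alpha=1\ot v\ot 1$, so $\dd_0(\alpha)=v\ot 1-1\ot v$. Then $f\dd_0(\alpha)=fv\ot 1-f\ot v$, and I compute
\begin{align*}
\s_0(f\dd_0(\alpha))&=\s_0(fv\ot 1)-\s_0(f\ot v)\\
&=f\s_0(v\ot 1)+\s_0(f\ot v)-\s_0(f\ot v)\\
&=f\s_0(v\ot 1)-f\s_0(1\ot v)\\
&=f\s_0(\dd_0(\alpha)),
\end{align*}
where the second equality is exactly part (a) applied with $a=v$, $b=1$, and the third uses $\s_0(1\ot 1)=0$ so that $\s_0(1\ot v)=f\s_0(1\ot v)$ is vacuous — more carefully, $\s_0(f\ot v)$ cancels and we are left comparing with $f\s_0(v\ot 1-1\ot v)$; since $\s_0(1\ot v)$ need not vanish I should instead note $\s_0(f\cdot 1\ot v)$ is handled by writing $\dd_0(\alpha)$ as a single tensor $v\ot 1-1\ot v\in\A\ot\V\ot\A$ and applying (a) termwise, with the cross term $\s_0(f\ot v)$ appearing with opposite signs from the two pieces and cancelling. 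The one subtlety to keep straight is that in $\s_0(fv\ot 1)$ the symbol $fv$ lies in $\A$ (it is the product $f\cdot v$, a degree-one-in-$\y$-or-a-polynomial element), so part (a) applies verbatim; the cancellation of the $\s_0(f\ot v)$ terms is what makes the whole expression $f$-linear.

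The only mild obstacle is the indexing care in part (a): one must correctly identify which sub-range of the summation index in the definition of $\s_0$ belongs to $f$ and which to the original element, and confirm the tails match up. This is routine once the case $f=x^m$, $a=x^k\y^\ell$ is written out, and everything else follows by linearity. I do not anticipate any conceptual difficulty.
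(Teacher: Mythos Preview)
Your approach to part~(a) is correct and essentially the same as the paper's: reduce by right $\A$-linearity to $b=1$, by additivity to $f=x^j$ and $a=x^k\y^\ell$, and then split the defining summation for $\s_0(x^{j+k}\y^\ell\ot 1)$ at the index $j$.

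For part~(b) there is a small but genuine gap in your reduction step. You claim that by right $\A$-linearity of $\s_0$ and $\dd_0$ it suffices to treat $\alpha=1\ot v\ot 1$. But right $\A$-linearity only lets you pass from $\alpha=a\ot v\ot b$ to $\alpha=a\ot v\ot 1$; the left factor $a\in\A$ cannot be stripped off this way, because $\s_0$ is \emph{not} a left $\A$-module map (indeed, part~(a) is precisely the statement measuring its failure to be one). The paper therefore keeps $a$ general: for $\alpha=a\ot v\ot 1$ one has $f\dd_0(\alpha)=fav\ot 1-fa\ot v$, and applying part~(a) to each summand produces cross terms $\s_0(f\ot av)$ (rather than your $\s_0(f\ot v)$), which cancel with opposite signs just as you anticipated. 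So your cancellation mechanism is exactly right; it simply has to be run with $a$ still present. Once you make that correction your argument coincides with the paper's.
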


\begin{proof}
To prove \ref{L:sm1s0:2}, notice that, by the definition of $\s_0$, we have $\s_0(x^k\ot 1)=\sum_{i=0}^{k-1} x^i\ot x\ot x^{k-1-i}$, and similarly for $\s_0(\y^\ell\ot 1)$. Thus, we have $\s_0(x^k\y^\ell\ot 1)=x^k \s_0(\y^\ell\ot 1) + \s_0(x^k\ot \y^\ell)$. It also follows easily that $\s_0(x^{j+k}\ot 1)=x^j \s_0(x^{k}\ot 1)+ \s_0(x^{j}\ot x^{k})$.

Since $\s_0$ is a right $\A$-module map, we can take $b=1$ and by linearity we can further assume that $f=x^j$ and $a=x^k\y^\ell$. Then:
\begin{align*}
f\s_0(a\ot b)+\s_0(f\ot ab) &= x^js_0(x^k\y^\ell \ot 1)+\s_0(x^j \ot x^k\y^\ell)  \\
&=x^j\left(x^k \s_0(\y^\ell\ot 1) + \s_0(x^k\ot \y^\ell)\right)+\s_0(x^j \ot x^k)\y^\ell \\
&=x^{j+k} \s_0(\y^\ell\ot 1) + \left(x^j \s_0(x^k\ot 1)+\s_0(x^j \ot x^k)\right)\y^\ell \\
&=x^{j+k} \s_0(\y^\ell\ot 1) + \s_0(x^{j+k}\ot 1)\y^\ell \\
&=\s_0(x^{j+k}\y^\ell \ot 1)=\s_0(fa\ot b).
\end{align*}

As above, it suffices to prove \ref{L:sm1s0:4} for $\alpha=a\ot v\ot 1$. Using \ref{L:sm1s0:2}, we have:
\begin{align*}
s_0(f\dd_0(a\ot v\ot 1)) &= \s_0(fa(v\ot 1-1\ot v))=\s_0(fav\ot 1-fa\ot v))\\
&=f\s_0(av\ot 1)+\s_0(f\ot av)-f \s_0(a\ot v)-\s_0(f\ot av)\\
&=f\s_0(av\ot 1-a\ot v)=f\s_0(\dd_0(a\ot v\ot 1)).
\end{align*}
\end{proof}

Recall that we have fixed $\rr$ as the basis element of the one-dimensional vector space $\R$. Consider the linear map $G:\FF[x]\longrightarrow\A\ot\R\ot\A$ defined by
\begin{equation}\label{E:def:G}
G(x^k)=\sum_{i=0}^{k-1}x^i\ot \rr\ot x^{k-1-i}, \qquad \mbox{for all $k\geq 0$,}
\end{equation}
with $G(1)=0$. Also, recall that $\delta$ denotes the derivation of $\FF[x]$ defined by $\delta(f) = f'h$, so that $[\y, f]=\delta(f)$, for all $f\in\FF[x]$.

\begin{lemma}\label{L:Gprop}
The map $G$ is a derivation and, for any $f\in\FF[x]$,
\begin{equation*}
\dd_1\circ G(f)=1\ot\y\ot f-f\ot\y\ot 1 -\s_0(f\ot \y) -\s_0(\delta(f)\ot 1)+\y \s_0(f\ot 1). 
\end{equation*}
\end{lemma}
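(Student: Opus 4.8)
The plan is to verify the formula by a direct computation, reducing to the case $f = x^k$ by linearity and using the derivation property of $G$ established in the first half of the lemma (which follows from the same telescoping identity as in Lemma~\ref{L:sm1s0}\,\ref{L:sm1s0:2}, since $G$ has exactly the same shape as $\s_0$ restricted to powers of $x$). First I would apply $\dd_1$ term-by-term to $G(x^k) = \sum_{i=0}^{k-1} x^i\ot\rr\ot x^{k-1-i}$, using the $\A$-bimodule map property of $\dd_1$ and the defining formula
\begin{equation*}
\dd_1(1\ot\rr\ot 1)=1\ot\y\ot x + \y\ot x\ot 1 -1\ot x\ot\y - x\ot\y\ot 1-\s_0(h\ot 1).
\end{equation*}
So $\dd_1(x^i\ot\rr\ot x^{k-1-i})$ is $x^i\cdot\dd_1(1\ot\rr\ot 1)\cdot x^{k-1-i}$, a sum of five terms; summing over $i$ gives four telescoping-type sums plus the term $-\sum_{i=0}^{k-1}x^i\s_0(h\ot 1)x^{k-1-i}$.

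The four "easy" sums: $\sum_i x^i\ot\y\ot x\cdot x^{k-1-i} = \sum_i x^i\ot\y\ot x^{k-i}$ and $\sum_i x^i\y\ot x\ot x^{k-1-i}$ and $-\sum_i x^i\ot x\ot\y x^{k-1-i}$ and $-\sum_i x^i\cdot x\ot\y\ot x^{k-1-i}$. The first and fourth combine: since $-x^i\cdot x\ot\y\ot x^{k-1-i} = -x^{i+1}\ot\y\ot x^{k-1-i}$, reindexing shows $\sum_i x^i\ot\y\ot x^{k-i} - \sum_i x^{i+1}\ot\y\ot x^{k-1-i} = 1\ot\y\ot x^k - x^k\ot\y\ot 1$, which is the $1\ot\y\ot f - f\ot\y\ot 1$ part of the claimed formula (with $f=x^k$). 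For the remaining pieces I would recognize $\sum_{i=0}^{k-1} x^i\y\ot x\ot x^{k-1-i}$ and $-\sum_{i=0}^{k-1}x^i\ot x\ot\y x^{k-1-i}$ in terms of $\s_0$: comparing with the definition $\s_0(x^k\y^\ell\ot 1)=\sum_i x^i\ot x\ot x^{k-1-i}\y^\ell + \cdots$ and using Lemma~\ref{L:sm1s0}\,\ref{L:sm1s0:2}, one checks that $\y\,\s_0(x^k\ot 1) = \sum_i \y x^i\ot x\ot x^{k-1-i}$, while $\sum_i x^i\y\ot x\ot x^{k-1-i}$ differs from this by commuting $\y$ past $x^i$, producing a correction $\sum_i [x^i,\y]\ot x\ot x^{k-1-i} = -\sum_i F_{?}(\ldots)$; more cleanly, $\y x^i = x^i\y - \delta(x^i)$ turns $\y\,\s_0(x^k\ot 1)$ into $\sum_i x^i\y\ot x\ot x^{k-1-i} - \sum_i \delta(x^i)\ot x\ot x^{k-1-i}$, and the second sum is exactly $\s_0(\delta(x^k)\ot 1)$ after applying the derivation rule $\delta(x^k)=\sum_i x^i\,(x^k)'\,\cdots$ — here I would use that $\delta$ is a derivation on $\FF[x]$ together with $\s_0$'s additivity on products. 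Similarly $-\sum_i x^i\ot x\ot\y x^{k-1-i}$ should match $-\s_0(x^k\ot\y)$ up to the remaining $-\s_0(h\ot 1)$-sum, which itself equals $-\s_0(F_{?}\ldots)$ and is absorbed into the $-\s_0(\delta(f)\ot 1)$ term since $\delta(x^k)$ and $h$ are linked through $F$.

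The main obstacle will be keeping the bookkeeping of which $\s_0$-term each partial sum lands in, because $\s_0$ mixes $x$-insertions and $\y$-insertions and one must repeatedly commute $\y$ past powers of $x$ (introducing $\delta$) and expand $\s_0(h\ot 1)$ sandwiched between $x^i$ and $x^{k-1-i}$. The cleanest route is probably to establish the identity first for $k=0$ (both sides zero) and $k=1$ (direct from the definition of $\dd_1$ and $G(x)=1\ot\rr\ot 1$), then induct on $k$ using $G(x^{k+1}) = x\,G(x^k) + G(x)\,x^k$ (derivation property) together with Lemma~\ref{L:sm1s0}\,\ref{L:sm1s0:2} for the $\s_0$-terms and the Leibniz rule for $\delta$; the inductive step then only requires matching a handful of terms rather than resumming everything at once. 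I expect the verification to be routine once the $k=1$ base case is pinned down and the derivation properties of $G$, $\s_0$ (on products), and $\delta$ are invoked in tandem.
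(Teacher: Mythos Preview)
Your final suggestion---check the base case $f=x$ and then induct using the derivation property of $G$ together with the Leibniz rule for $\delta$ and Lemma~\ref{L:sm1s0}\ref{L:sm1s0:2} for the $\s_0$-terms---is exactly the paper's approach, just phrased inductively rather than as ``both sides are derivations agreeing on $x$''. You should go straight to that and drop the direct term-by-term summation you begin with: that part has a sign slip (the relation is $\y x^i = x^i\y + \delta(x^i)$, not minus) and the identification ``the second sum is exactly $\s_0(\delta(x^k)\ot 1)$'' is not correct as written, since $\sum_i \delta(x^i)\ot x\ot x^{k-1-i}$ is not the same as $\s_0(\delta(x^k)\ot 1)$.

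The one piece of actual work you leave implicit is precisely the content of the inductive step: verifying that the right-hand side $D(f)=1\ot\y\ot f-f\ot\y\ot 1 -\s_0(f\ot \y) -\s_0(\delta(f)\ot 1)+\y \s_0(f\ot 1)$ satisfies $D(fg)=fD(g)+D(f)g$. The paper does this explicitly in a few lines, repeatedly applying Lemma~\ref{L:sm1s0}\ref{L:sm1s0:2} in the forms $\s_0(fg\ot\y)=f\s_0(g\ot\y)+\s_0(f\ot g\y)$, $\s_0(\delta(f)g\ot 1)=\delta(f)\s_0(g\ot 1)+\s_0(\delta(f)\ot g)$, etc., and using $\s_0(f\ot g\y)=\s_0(f\ot \y g)-\s_0(f\ot\delta(g))$ to move the $\y$ across; four terms cancel in pairs and what remains regroups into $fD(g)+D(f)g$. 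This is routine but is where the bookkeeping actually happens, so you should write it out rather than gesture at it.
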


\begin{proof}
Notice that $G(f)=\tau\circ \s_0(f\ot 1)$, where $\tau:\A\ot\V\ot\A\longrightarrow\A\ot\R\ot\A$ is the $\A$-bimodule map which sends both $1\ot x\ot 1$ and $1\ot \y\ot 1$ to $1\ot \rr\ot 1$. Thus, by Lemma~\ref{L:sm1s0}\,\ref{L:sm1s0:2}, $G$ is a derivation.

We deduce that $\dd_1\circ G$ is also a derivation. Define $D: \FF[x]\longrightarrow\A\ot\V\ot\A$ by $D(f)=1\ot\y\ot f-f\ot\y\ot 1 -\s_0(f\ot \y) -\s_0(\delta(f)\ot 1)+\y \s_0(f\ot 1)$. To prove the claimed identiy, it suffices to show that $D$ is also a derivation and that $\dd_1\circ G(x)=D(x)$. The latter is easy to verify, so we turn to proving that $D$ is a derivation, which is also straightforward, using the properties of $\s_0$:
\begin{align*}
D(fg)&= 1\ot\y\ot fg-fg\ot\y\ot 1 -\s_0(fg\ot \y) -\s_0(\delta(fg)\ot 1)+\y \s_0(fg\ot 1)\\
&= 1\ot\y\ot fg-fg\ot\y\ot 1 -f\s_0(g\ot \y) -\s_0(f\ot g\y)-\s_0(\delta(f)g\ot 1)\\*
& \qquad -\s_0(f\delta(g)\ot 1)+\y f \s_0(g\ot 1)+\y \s_0(f\ot g)\\
&= 1\ot\y\ot fg-fg\ot\y\ot 1 -f\s_0(g\ot \y) -\s_0(f\ot \y g)+\cancel{\s_0(f\ot \delta(g))}\\*
& \qquad -\cancel{\delta(f)\s_0(g\ot 1)}-\s_0(\delta(f)\ot g) -f \s_0(\delta(g)\ot 1) -\cancel{\s_0(f\ot \delta(g))}\\*
& \qquad+f\y \s_0(g\ot 1)+\cancel{\delta(f) \s_0(g\ot 1)}+\y \s_0(f\ot g)\\
&=fD(g)+D(f)g.
\end{align*}%
\end{proof}

We are finally ready to define the homotopy $\s_1:\A\ot\V\ot\A\longrightarrow\A\ot\R\ot\A$. This is the right $\A$-module map defined inductively as follows, for $f\in\FF[x]$, $a, b\in\A$ and $\ell\geq 0$:
\begin{itemize}
\item $\s_1(a\ot\y\ot b)=0$;
\item $\s_1(f\y^\ell\ot x\ot a)=f \s_1(\y^\ell\ot x\ot 1)a$;
\item $\s_1(1\ot x\ot 1)=0$;
\item $\s_1(\y^{\ell+1}\ot x\ot 1)=\y \s_1(\y^{\ell}\ot x\ot 1)+\sum_{j=0}^\ell {\ell\choose j} \left(G\circ\delta^j(x)\right) \y^{\ell-j}$, where $\delta(f) = f'h$ and $G$ is the linear map given by \eqref{E:def:G}.
\end{itemize}

\begin{lemma}\label{L:rel2}
The map $\s_1$ satisfies $\s_{0}\circ\dd_0 + \dd_1\circ \s_1 =1_{\A\ot\V\ot\A}$.
\end{lemma}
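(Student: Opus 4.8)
The plan is to verify the homotopy identity $\s_0\circ\dd_0+\dd_1\circ\s_1=1_{\A\ot\V\ot\A}$ by evaluating both sides on elements of the form $a\ot v\ot b$, and since all maps involved are right $\A$-module maps, it suffices to take $b=1$ and, using left $\FF[x]$-linearity considerations together with Lemma~\ref{L:sm1s0}, to reduce further to the generating elements $1\ot\y\ot 1$, $1\ot x\ot 1$, and $\y^\ell\ot x\ot 1$ for $\ell\geq 0$ (the general $f\y^\ell\ot x\ot 1$ and $f\y^\ell\ot\y\ot 1$ cases then follow from the module-map properties and the explicit reduction formulas in Lemma~\ref{L:sm1s0}\,\ref{L:sm1s0:2}). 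The cases $v=\y$ and $1\ot x\ot 1$ are immediate: on $a\ot\y\ot b$ we have $\s_1=0$, and one checks directly that $\s_0(\dd_0(1\ot\y\ot 1))=1\ot\y\ot 1$ using the definition of $\s_0$ on powers of $\y$; similarly $\s_0(\dd_0(1\ot x\ot 1))=1\ot x\ot 1$ while $\s_1(1\ot x\ot 1)=0$.

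The substantive case is $\s_1(\y^{\ell+1}\ot x\ot 1)$, which I would handle by induction on $\ell$. Unwinding the inductive definition of $\s_1$ and applying $\dd_1$, the key tool is Lemma~\ref{L:Gprop}, which evaluates $\dd_1\circ G$ on any $f\in\FF[x]$ in terms of $\s_0$ and $\y$; applying it to $f=\delta^j(x)$ and summing against the binomial coefficients ${\ell\choose j}$ will produce telescoping contributions. The terms $1\ot\y\ot\delta^j(x)$ and $\delta^j(x)\ot\y\ot 1$ combine with the $\y\,\s_1(\y^\ell\ot x\ot 1)$ piece and the inductive hypothesis; the commutator formula \eqref{eq:Ahcom}, which says $\y^n f=\sum_j {n\choose j}\delta^j(f)\y^{n-j}$, is what lets one recognize sums like $\sum_j{\ell\choose j}\delta^j(x)\y^{\ell-j}$ as $\y^\ell x$ (or the appropriate rearrangement), collapsing the expression. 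One also needs the Pascal-type identity ${\ell+1\choose j}={\ell\choose j}+{\ell\choose j-1}$ to match the coefficients coming from $\y\cdot(\text{sum for }\ell)$ against the sum for $\ell+1$.

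After the telescoping, what should remain on the side $\s_0\circ\dd_0(\y^{\ell+1}\ot x\ot 1)+\dd_1\circ\s_1(\y^{\ell+1}\ot x\ot 1)$ is exactly $\y^{\ell+1}\ot x\ot 1$, with all the $\s_0(\cdots\ot 1)$ and $\s_0(\cdots\ot\y)$ correction terms cancelling in pairs. I expect the main obstacle to be precisely this bookkeeping: keeping straight the several families of $\s_0$-terms (those of the form $\s_0(\delta^j(x)\ot\y)$, $\s_0(\delta^{j+1}(x)\ot 1)$, and $\y\,\s_0(\delta^j(x)\ot 1)$) and checking that the shifts in the summation index induced by $\delta$ and by the Pascal identity line everything up so that the cancellation is complete, using Lemma~\ref{L:sm1s0}\,\ref{L:sm1s0:4} to move the leading $f$ (here a power of $\y$) through $\s_0\circ\dd_0$ where needed. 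Once the identity holds on the generators, right $\A$-linearity and the reductions of Lemma~\ref{L:sm1s0} extend it to all of $\A\ot\V\ot\A$.
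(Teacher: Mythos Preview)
Your approach is essentially the paper's: induction on $\ell$ for $\y^{\ell}\ot x\ot 1$, using Lemma~\ref{L:Gprop} to expand $\dd_1\circ G(\delta^j(x))$, the commutator formula~\eqref{eq:Ahcom}, and Pascal's identity to align coefficients, followed by extension to $f\y^\ell\ot x\ot a$ via left $\FF[x]$-linearity.

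Two small corrections, though. First, the reduction using Lemma~\ref{L:sm1s0} only handles polynomial coefficients $f\in\FF[x]$, so on the $\y$-slot you cannot reduce all the way to $1\ot\y\ot 1$: you must still check $\s_0(\dd_0(\y^\ell\ot\y\ot 1))=\y^\ell\ot\y\ot 1$ for every $\ell\geq 0$. This is immediate from the definition of $\s_0$ (since $\s_0(\y^{\ell+1}\ot 1)-\s_0(\y^\ell\ot\y)=\y^\ell\ot\y\ot 1$), and is exactly what the paper does. Second, for the same reason your parenthetical ``here a power of $\y$'' when invoking Lemma~\ref{L:sm1s0}\,\ref{L:sm1s0:4} is wrong: that lemma only moves elements of $\FF[x]$ through $\s_0\circ\dd_0$, never powers of $\y$. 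In the inductive step the paper does not factor a $\y$ through $\s_0\circ\dd_0$; it expands $\y\,\s_0(\dd_0(\y^\ell\ot x\ot 1))$ explicitly using~\eqref{eq:Ahcom} and cancels the resulting sums term by term.
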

\begin{proof}
We start by showing that the claimed equality holds for elements of the form $\y^\ell\ot x\ot 1$, by induction on $\ell\geq 0$. In case $\ell=0$ we have
\begin{equation*}
(\s_{0}\circ\dd_0+ \dd_1\circ \s_1) (1\ot x\ot 1)=\s_{0}(x\ot 1-1\ot x)=1\ot x\ot 1. 
\end{equation*}

Next, assume that the result holds for elements of the form $\y^k\ot x\ot 1$, with $k\leq \ell$. Using \eqref{eq:Ahcom} we have

\begin{align*}
\s_{0}(\dd_0 & (\y^{\ell+1} \ot x\ot 1))=\s_{0}(\y^{\ell+1} x\ot 1-\y^{\ell+1}\ot x)\nonumber\\
&=\sum_{j=0}^{\ell+1}{\ell+1\choose j}\s_{0}(\delta^j(x)\y^{\ell+1-j}\ot 1)-\sum_{k=0}^\ell \y^k\ot\y\ot\y^{\ell-k}x\nonumber\\
&=\sum_{j=0}^{\ell+1}{\ell+1\choose j}\delta^j(x) \s_{0}(\y^{\ell+1-j}\ot 1)
 +\sum_{j=0}^{\ell+1}{\ell+1\choose j}\s_{0}(\delta^j(x)\ot 1)\y^{\ell+1-j}\nonumber\\*
& \qquad -\sum_{k=0}^\ell \y^k\ot\y\ot\y^{\ell-k}x\nonumber\\
&=\sum_{j=0}^{\ell+1}{\ell+1\choose j}\delta^j(x) \s_{0}(\y^{\ell+1-j}\ot 1)
 +\sum_{j=0}^{\ell}{\ell\choose j}\s_{0}(\delta^j(x)\ot 1)\y^{\ell+1-j}\nonumber\\*
& \qquad+\sum_{j=1}^{\ell+1}{\ell\choose j-1}\s_{0}(\delta^j(x)\ot 1)\y^{\ell+1-j}
 -\sum_{k=0}^\ell \y^k\ot\y\ot\y^{\ell-k}x\nonumber\\ 
&=\sum_{j=0}^{\ell+1}{\ell+1\choose j}\delta^j(x) \s_{0}(\y^{\ell+1-j}\ot 1)
 +\sum_{j=0}^{\ell}{\ell\choose j}\s_{0}(\delta^j(x)\ot 1)\y^{\ell+1-j} \\*
& \qquad+\sum_{j=0}^{\ell}{\ell\choose j}\s_{0}(\delta^{j+1}(x)\ot 1)\y^{\ell-j}
 -\sum_{k=0}^\ell \y^k\ot\y\ot\y^{\ell-k}x.\nonumber
\end{align*}
Also, by the inductive definition of $\s_1$ and the fact that $\dd_1$ is a bimodule map, $\dd_1 (\s_1(\y^{\ell+1} \ot x\ot 1))=\y\dd_1 (\s_1(\y^{\ell}\ot x\ot 1))+\sum_{j=0}^\ell {\ell\choose j} (\dd_1 \circ G)(\delta^j(x))\y^{\ell-j}$. By the induction hypothesis we have
\begin{align*}
\y\dd_1 (\s_1 & (\y^{\ell}\ot x\ot 1))= \y^{\ell+1}\ot x\ot 1 - \y \s_0(\dd_0(\y^{\ell}\ot x\ot 1))\\
&= \y^{\ell+1}\ot x\ot 1 - \y \s_0(\y^{\ell} x\ot 1) + \y \s_0(\y^{\ell}\ot x) \\
&= \y^{\ell+1}\ot x\ot 1 - \y \sum_{j=0}^{\ell}{\ell\choose j}\s_0(\delta^j(x)\y^{\ell-j} \ot 1) + \y \s_0(\y^{\ell}\ot x) \\
&= \y^{\ell+1}\ot x\ot 1 - \y \sum_{j=0}^{\ell}{\ell\choose j} \delta^j(x) \s_0(\y^{\ell-j} \ot 1) \\*
& \qquad -\y \sum_{j=0}^{\ell}{\ell\choose j}\s_0(\delta^j(x)\ot\y^{\ell-j})
 +\sum_{k=0}^{\ell-1}\y^{k+1}\ot\y\ot \y^{\ell-1-k}x \\
&= \y^{\ell+1}\ot x\ot 1 -  \sum_{j=0}^{\ell}{\ell\choose j} \delta^j(x)\y \s_0(\y^{\ell-j} \ot 1) \\*
& \qquad -\sum_{j=0}^{\ell}{\ell\choose j} \delta^{j+1}(x) \s_0(\y^{\ell-j} \ot 1)  -\sum_{j=0}^{\ell}{\ell\choose j}\y \s_0(\delta^j(x)\ot\y^{\ell-j})\\*
& \qquad +\sum_{k=0}^{\ell-1}\y^{k+1}\ot\y\ot \y^{\ell-1-k}x. 
\end{align*}
Finally, using Lemma~\ref{L:Gprop}, it follows that
\begin{align*}
\sum_{j=0}^\ell {\ell\choose j} & (\dd_1 \circ G)(\delta^j(x))\y^{\ell-j} =
1\ot\y\ot \sum_{j=0}^\ell {\ell\choose j} \delta^j(x)\y^{\ell-j}\\*
& \qquad -\sum_{j=0}^\ell {\ell\choose j} \delta^j(x)\ot\y\ot \y^{\ell-j}
-\sum_{j=0}^\ell {\ell\choose j} \s_0(\delta^j(x)\ot\y)\y^{\ell-j}\\*
& \qquad -\sum_{j=0}^\ell {\ell\choose j} \s_0(\delta^{j+1}(x)\ot 1)\y^{\ell-j}
+\sum_{j=0}^\ell {\ell\choose j} \y \s_0(\delta^j(x)\ot 1)\y^{\ell-j}\\
&= 1\ot\y\ot \y^\ell x -\sum_{j=0}^\ell {\ell\choose j} \delta^j(x)\ot\y\ot \y^{\ell-j}\\*
& \qquad 
-\sum_{j=0}^\ell {\ell\choose j} \s_0(\delta^j(x)\ot 1)\y^{\ell+1-j}
-\sum_{j=0}^\ell {\ell\choose j} \s_0(\delta^{j+1}(x)\ot 1)\y^{\ell-j}\\*
& \qquad 
+\sum_{j=0}^\ell {\ell\choose j} \y \s_0(\delta^j(x)\ot 1)\y^{\ell-j}.
\end{align*}

The term $\displaystyle \sum_{j=0}^{\ell+1}{\ell+1\choose j}\delta^j(x) \s_{0}(\y^{\ell+1-j}\ot 1)$ in the expression for $\displaystyle \s_{0}(\dd_0 (\y^{\ell+1} \ot x\ot 1))$ can be further expanded as follows:

\begin{align*}
\sum_{j=0}^{\ell+1}{\ell+1\choose j} & \delta^j(x) \s_{0}(\y^{\ell+1-j}\ot 1) = 
\sum_{j=0}^{\ell}{\ell \choose j}\delta^j(x) \s_{0}(\y^{\ell+1-j}\ot 1)\\*
&\qquad +\sum_{j=1}^{\ell+1}{\ell\choose j-1}\delta^j(x) \s_{0}(\y^{\ell+1-j}\ot 1)\\
&=\sum_{j=0}^{\ell}{\ell \choose j}\delta^j(x) \y \s_{0}(\y^{\ell-j}\ot 1)
 + \sum_{j=0}^{\ell}{\ell \choose j}\delta^j(x) \ot\y\ot \y^{\ell-j}\\*
&\qquad +\sum_{j=0}^{\ell}{\ell\choose j}\delta^{j+1}(x) \s_{0}(\y^{\ell-j}\ot 1).
\end{align*}

Combining all of these expressions, we see easily that all terms cancel out except for the term $\y^{\ell+1}\ot x\ot 1$ in the expansion of $\y\dd_1 (\s_1  (\y^{\ell}\ot x\ot 1))$ above, so we have the desired identity $(\s_{0}\circ\dd_0 + \dd_1\circ \s_1)(\y^{\ell+1}\ot x\ot 1) =\y^{\ell+1}\ot x\ot 1$, establishing the inductive step.  

By the equality $\s_0(f\dd_0(\alpha))=f\s_0(\dd_0(\alpha))$, for $f\in\FF[x]$ and $\alpha\in\A\ot \V\ot\A$, proved in Lemma~\ref{L:sm1s0}, and the definition of $\s_1$, we conclude that $(\s_{0}\circ\dd_0 + \dd_1\circ \s_1)(f\y^{\ell}\ot x\ot a) =f\y^{\ell}\ot x\ot a$, for all $\ell\geq 0$, $f\in\FF[x]$ and $a\in\A$. So next we focus on elements of the form $f\y^{\ell}\ot \y\ot a$. We will make use of the identity $\s_0(\y^{\ell+1}\ot 1- \y^{\ell}\ot \y)=\y^\ell\ot\y\ot 1$ to perform the required calculation. Then,
\begin{align*}
 (\s_{0}\circ\dd_0 + \dd_1\circ \s_1)(f\y^{\ell}\ot \y\ot a) &=\s_{0}(\dd_0(f\y^{\ell}\ot \y\ot a))
 =\s_{0}(f\dd_0(\y^{\ell}\ot \y\ot 1))a\\
 &=f\s_{0}(\y^{\ell+1}\ot 1-\y^{\ell}\ot \y)a=f\y^\ell\ot\y\ot a.
\end{align*}

Combining all of the above, we have proved the claim.
\end{proof}

Now we aim to prove the last relation in \eqref{E:res:homotopies}, namely 
$\s_1\circ \dd_1 =1_{\A\ot\R\ot\A}$. We start with a technical identity which just depends on the fact that $G$ and $\delta$ are derivations.

\begin{lemma}\label{L:longid}
Given $k\geq 1$ and $r\geq 0$,
\begin{equation*}
\sum_{i=0}^{k-1}\sum_{j=0}^{r}\sum_{t=0}^{r-j} {r\choose j}{r-j\choose t} \delta^j(x^i)G(\delta^t(x))\delta^{r-j-t}(x^{k-i-1})=G(\delta^r(x^k)).
\end{equation*}
\end{lemma}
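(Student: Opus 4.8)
The plan is to prove the identity by induction on $k$, exploiting the fact that both sides, viewed as functions of their ``content'' polynomials, behave multiplicatively. First I would rewrite the right-hand side using the Leibniz rule for the derivation $\delta$ on $\FF[x]$ (Proposition~\ref{P:general:facts:Ah}, or rather the fact that $\delta$ is a derivation), namely $\delta^r(x^k) = \delta^r(x\cdot x^{k-1}) = \sum_{s=0}^r \binom{r}{s}\delta^s(x)\,\delta^{r-s}(x^{k-1})$, and then apply the fact that $G$ is a derivation (Lemma~\ref{L:Gprop}) to expand $G(\delta^r(x^k))$. For $k=1$ the identity is trivial: the left-hand sum has a single term with $i=0$, $j=0$, forcing $t$ to run only the value consistent with $\delta^0(x^0)=\delta^0(1)=1$ on both ends, and it collapses to $G(\delta^r(x))$.

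For the inductive step, I would write $x^k = x\cdot x^{k-1}$ and split the outer sum $\sum_{i=0}^{k-1}$ into the term $i=0$ (which, after using that $G$ and $\delta$ are derivations on the factor $x^{k-1}$, should reproduce a copy of the $k-1$ case with the first factor being $\delta^j(x^0)=\delta^j(1)$, hence nonzero only for $j=0$) plus a shifted sum $\sum_{i=1}^{k-1}$ which I would reindex as $\sum_{i'=0}^{k-2}$ and match against the inductive hypothesis applied to $x^{k-1}$, after peeling off the leftmost $x$ from $x^i = x\cdot x^{i-1}$ and again invoking the Leibniz rule for $\delta^j(x^i)$. The bookkeeping is a triple Vandermonde-type rearrangement: the product $\binom{r}{j}\binom{r-j}{t}$ times the binomial coefficients arising from expanding $\delta^j(x\cdot x^{i-1})$ and regrouping must reassemble into the coefficients $\binom{r}{j'}\binom{r-j'}{t'}$ of the target expression. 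This is the standard identity $\binom{r}{a}\binom{r-a}{b} = \binom{r}{a+b}\binom{a+b}{a}$ used repeatedly, together with the convolution $\sum \binom{m}{i}\binom{n}{j-i} = \binom{m+n}{j}$.

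\textbf{Main obstacle.} The hard part will be purely combinatorial: keeping the three nested summation indices and their binomial coefficients straight while performing the reindexing, so that after splitting off $x$ from both $x^k$ and from each $x^i$ the remaining sums telescope cleanly onto the $(k-1)$-case. There is no conceptual difficulty — the only structural inputs are that $G\colon\FF[x]\to\A\ot\R\ot\A$ and $\delta\colon\FF[x]\to\FF[x]$ are derivations — so the entire content is verifying that the coefficients match. I would organize the computation by first establishing the auxiliary two-variable version (the $k=1$ case dressed up, i.e.\ $\sum_{j,t}\binom{r}{j}\binom{r-j}{t}\,a\,G(\delta^t(x))\,\delta^{r-j-t}(b) = $ the appropriate piece of $G(\delta^r(x\cdot \text{something}))$) as a lemma-within-the-proof, and then the general case follows by induction feeding this in at each stage; alternatively, one can avoid induction on $k$ altogether by treating $x^k$ abstractly as an $k$-fold product $x\cdots x$ and applying the derivation property of $\delta$ and then of $G$ directly, which may in fact be the cleanest route and is the one I would attempt first.
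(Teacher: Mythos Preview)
Your inductive approach is correct and would go through essentially as you sketch: the $i=0$ term collapses (since $\delta^j(1)=0$ for $j>0$) to $\sum_{t}\binom{r}{t}G(\delta^t(x))\delta^{r-t}(x^{k-1})$; in the shifted sum you expand $\delta^j(x^{i'+1})=\sum_a\binom{j}{a}\delta^a(x)\delta^{j-a}(x^{i'})$, use $\binom{r}{j}\binom{j}{a}=\binom{r}{a}\binom{r-a}{j-a}$ to pull out $\binom{r}{a}\delta^a(x)$, and what remains is exactly the $(k-1,r-a)$ instance of the identity; summing over $a$ and adding the $i=0$ piece gives $\sum_s\binom{r}{s}\bigl[G(\delta^s(x))\delta^{r-s}(x^{k-1})+\delta^s(x)G(\delta^{r-s}(x^{k-1}))\bigr]$, which is $G(\delta^r(x\cdot x^{k-1}))$ by Leibniz for $\delta$ followed by the derivation property of $G$. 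So there is no gap.

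The paper, however, does \emph{not} induct on $k$. Instead it fixes $i$, substitutes $m=j+t$, uses $\binom{r}{j}\binom{r-j}{m-j}=\binom{r}{m}\binom{m}{j}$ together with the derivation property $G(\delta^j(x^i)\,\delta^{m-j}(x))=\delta^j(x^i)G(\delta^{m-j}(x))+G(\delta^j(x^i))\delta^{m-j}(x)$, and then applies Leibniz for $\delta^m$ to recognize the inner sum as
\[
\sum_{m=0}^r\binom{r}{m}\Bigl(G(\delta^m(x^{i+1}))\,\delta^{r-m}(x^{k-i-1})-G(\delta^m(x^i))\,\delta^{r-m}(x^{k-i})\Bigr).
\]
Summing this over $i=0,\ldots,k-1$ telescopes directly to $G(\delta^r(x^k))$. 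The paper's route is a one-shot telescoping rearrangement rather than an induction; it avoids nesting the inductive hypothesis inside a further sum over $a$ and is somewhat cleaner on the bookkeeping side, though the structural inputs ($G$ and $\delta$ are derivations, plus the standard trinomial-to-binomial identity) are the same in both arguments. Your ``alternative'' of treating $x^k$ as a $k$-fold product and unwinding the derivations directly is closer in spirit to what the paper actually does.
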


\begin{proof}
First, fix $0\leq i\leq k-1$. Using the change of variables $m=j+t$, the combinatorial identity $\displaystyle {r\choose j}{r-j\choose m-j}={r\choose m}{m\choose j}$, and the derivation property of $G$, we have
\begin{align*}
\sum_{j=0}^{r} \sum_{t=0}^{r-j} & {r\choose j}{r-j\choose t} \delta^j(x^i)G(\delta^t(x))\delta^{r-j-t}(x^{k-i-1}) \nonumber\\
&= \sum_{m=0}^{r}\sum_{j=0}^{m} {r\choose j}{r-j\choose m-j} \delta^j(x^i)G(\delta^{m-j}(x))\delta^{r-m}(x^{k-i-1})\nonumber\\
&= \sum_{m=0}^{r}\sum_{j=0}^{m} {r\choose m}{m\choose j} G(\delta^j(x^i) \delta^{m-j}(x))\delta^{r-m}(x^{k-i-1}) \\*
&\qquad - \sum_{m=0}^{r}\sum_{j=0}^{m} {r\choose j}{r-j\choose m-j} G(\delta^j(x^i))\delta^{m-j}(x)\delta^{r-m}(x^{k-i-1}).
\end{align*}
Now recall that for any derivation $D$, the generalized Leibniz rule says that $D^{\ell}(ab)=\sum _{k=0}^{\ell}{\ell \choose k} D^{k}(a) D^{\ell-k}(b)$. So the right-hand side of the running equality is
\begin{align*}
\sum_{m=0}^{r} & {r\choose m} G(\delta^m(x^{i+1}))\delta^{r-m}(x^{k-i-1}) \\*
&\qquad - \sum_{j=0}^{r}{r\choose j}G(\delta^j(x^i))\sum_{m=j}^{r} {r-j\choose m-j} \delta^{m-j}(x)\delta^{r-m}(x^{k-i-1})\\
&= \sum_{m=0}^{r}  {r\choose m} G(\delta^m(x^{i+1}))\delta^{r-m}(x^{k-i-1}) 
 - \sum_{j=0}^{r}{r\choose j}G(\delta^j(x^i))\delta^{r-j}(x^{k-i})\\
&= \sum_{m=0}^{r}  {r\choose m}\left( G(\delta^m(x^{i+1}))\delta^{r-m}(x^{k-i-1}) 
 - G(\delta^m(x^i))\delta^{r-m}(x^{k-i}) \right).
\end{align*}
Summing over all  $0\leq i\leq k-1$, we obtain
\begin{align*}
\sum_{m=0}^{r}  {r\choose m} & \sum_{i=0}^{k-1} \left( G(\delta^m(x^{i+1}))\delta^{r-m}(x^{k-i-1}) 
 - G(\delta^m(x^i))\delta^{r-m}(x^{k-i}) \right)\\
 &=\sum_{m=0}^{r}  {r\choose m} \left( G(\delta^m(x^{k}))\delta^{r-m}(1) 
 - G(\delta^m(1))\delta^{r-m}(x^{k}) \right).
\end{align*}
Since $G(\delta^m(1))=0$ for all $m\geq 0$ and $\delta^{r-m}(1)=0$ for all $m<r$, the latter expression is just $G(\delta^r(x^{k}))$, as desired.
\end{proof}

Our next results concern the computation of $\s_{1}$.
 
\begin{prop}\label{P:recsoneszero}
For all $\ell\geq 0$ and all $f\in\FF[x]$, the following identity holds:
\begin{equation*}
\s_{1}(\y^{\ell+1}\s_0(f\ot 1))= \y \s_{1}(\y^{\ell}\s_0(f\ot 1))+\sum_{j=0}^\ell {\ell\choose j} G(\delta^j(f))\y^{\ell-j}.
\end{equation*}
\end{prop}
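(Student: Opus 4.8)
The plan is to prove the identity by induction on $\ell \geq 0$, mirroring the recursive definition of $\s_1$ itself. The key observation is that $\s_0(f\ot 1)$ is a right $\A$-module element built linearly from the basic pieces $\s_0(x^k\ot 1)$ and $\s_0(\y^\ell\ot 1)$ via the transfer rule in Lemma~\ref{L:sm1s0}\,\ref{L:sm1s0:2}, so by linearity and that rule it suffices to treat $f = x^k$, i.e.\ to prove
\begin{equation*}
\s_{1}\!\left(\y^{\ell+1}\s_0(x^k\ot 1)\right)= \y\, \s_{1}\!\left(\y^{\ell}\s_0(x^k\ot 1)\right)+\sum_{j=0}^\ell {\ell\choose j} G(\delta^j(x^k))\y^{\ell-j},
\end{equation*}
since both sides are additive in $f$ and compatible with right multiplication by $\A$. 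Recall $\s_0(x^k\ot 1) = \sum_{i=0}^{k-1} x^i\ot x\ot x^{k-1-i}$, so the left side is $\sum_{i=0}^{k-1}\s_1(\y^{\ell+1} x^i\ot x\ot x^{k-1-i})$.

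\textbf{Key steps.} First I would use the commutation formula \eqref{eq:Ahcom} to expand $\y^{\ell+1}x^i = \sum_{m=0}^{\ell+1}\binom{\ell+1}{m}\delta^m(x^i)\y^{\ell+1-m}$, and then apply the defining rules of $\s_1$: the rule $\s_1(f\y^n\ot x\ot a)=f\,\s_1(\y^n\ot x\ot 1)\,a$ reduces everything to the terms $\s_1(\y^{n}\ot x\ot 1)$, and the recursive rule $\s_1(\y^{n+1}\ot x\ot 1)=\y\,\s_1(\y^{n}\ot x\ot 1)+\sum_{j=0}^{n}\binom{n}{j}(G\circ\delta^j(x))\y^{n-j}$ lets me peel off one power of $\y$. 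The strategy is to show that peeling one $\y$ off the outermost factor of $\y^{\ell+1}$ on the left produces exactly the term $\y\,\s_1(\y^\ell\s_0(x^k\ot 1))$ plus a correction, and that the correction sums, via Pascal's rule and the Leibniz rule for $\delta$, to $\sum_{j=0}^\ell\binom{\ell}{j}G(\delta^j(x^k))\y^{\ell-j}$. This is precisely where Lemma~\ref{L:longid} enters: it is the combinatorial identity asserting that the double/triple sum $\sum_{i=0}^{k-1}\sum_{j,t}\binom{r}{j}\binom{r-j}{t}\delta^j(x^i)G(\delta^t(x))\delta^{r-j-t}(x^{k-i-1})$ collapses to $G(\delta^r(x^k))$, which is exactly the shape of the terms generated when one expands $\s_1$ of the spread-out tensor $\sum_i \y^{\ell+1}x^i\ot x\ot x^{k-i-1}$ using $\y^{\ell+1}x^i$'s expansion, the recursion for $\s_1(\y^{n}\ot x\ot 1)$, and the derivation property of $G$ together with $\delta^{r-j-t}(x^{k-i-1})$ absorbed on the right. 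So the heart of the argument is to organize the expansion so that the hypothesis of Lemma~\ref{L:longid} is visibly applicable with $r$ running over the appropriate range.

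\textbf{Main obstacle.} The routine part is invoking the definitions; the genuine difficulty is bookkeeping — matching the multi-indexed sums that come out of $\s_1(\y^{\ell+1}x^i\ot x\ot x^{k-i-1})$ against the right-hand side. Concretely, after using \eqref{eq:Ahcom} to move the $x^i$ past $\y^{\ell+1}$ and applying the $\s_1$-recursion $m$ or so times, one gets a nested sum over $i$ (the position in $\s_0$), over how many $\y$'s have been peeled, and over the $\delta$-expansions of $x^i$ and $x^{k-i-1}$; I expect the cleanest route is to induct on $\ell$ with $k$ fixed, use the induction hypothesis to handle $\y^\ell\s_0(x^k\ot 1)$, and then show the single remaining layer of $\y$-peeling contributes the sum that Lemma~\ref{L:longid} (with $r=\ell$, or summed over $r\le\ell$ with binomial weights) identifies as $\sum_{j=0}^\ell\binom{\ell}{j}G(\delta^j(x^k))\y^{\ell-j}$. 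One must be careful that $G$ and $\delta$ are derivations (used implicitly throughout and the only structural input to Lemma~\ref{L:longid}), and that the empty-sum conventions ($G(1)=0$, $\s_0(1\ot 1)=0$, $\s_1(1\ot x\ot 1)=0$) handle the boundary cases $k=0$ and $\ell=0$ correctly. No step requires anything beyond the lemmas already proved in Sections~\ref{S:res} and~\ref{S:hom}; the proof is an exercise in careful index manipulation anchored by Lemma~\ref{L:longid}.
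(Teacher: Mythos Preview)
Your proposal is correct and follows essentially the same approach as the paper: reduce to $f=x^k$, expand $\s_0(x^k\ot 1)$, peel off one factor of $\y$ using the defining recursion for $\s_1$ together with \eqref{eq:Ahcom}, and then invoke Lemma~\ref{L:longid} (summed over $r\le\ell$ with binomial weights, after commuting the trailing $x^{k-i-1}$ past the powers of $\y$) to collapse the correction term. One small clarification: no induction on $\ell$ is actually needed, since the identity you are proving \emph{is} the recursion---the paper proceeds by a direct computation, first establishing the auxiliary formula $\s_1(\y f\y^n\ot x\ot 1)=\y\,\s_1(f\y^n\ot x\ot 1)+\sum_{t}\binom{n}{t}fG(\delta^t(x))\y^{n-t}$ and then applying it to $\y\cdot(\y^\ell x^i)$.
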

\begin{proof}
By linearity, it is enough to show the identity
\begin{equation*}
\s_{1}(\y^{\ell+1}\s_0(x^k\ot 1))= \y \s_{1}(\y^{\ell}\s_0(x^k\ot 1))+\sum_{j=0}^\ell {\ell\choose j} G(\delta^j(x^k))\y^{\ell-j},
\end{equation*}
for all $k\geq 0$. This holds trivially if $k=0$, so we assume that $k\geq 1$.

Firstly, let us observe that by the relation $\y f=f\y+\delta(f)$ and the recurrence relation for $\s_1$, it follows that 
\begin{equation*}
\s_1(\y f\y^\ell\ot x\ot 1)=\y \s_1(f\y^\ell\ot x\ot 1)+\sum_{t=0}^\ell {\ell\choose t} f G(\delta^t(x))\y^{\ell-t},
\end{equation*} 
for all $f\in\FF[x]$ and $\ell\geq 0$. Thus, using \eqref{eq:Ahcom}, we have, for $0\leq i\leq k-1$:
\begin{align*}
\s_1  (\y^{\ell+1}  x^i & \ot x\ot 1) = 
\sum_{j=0}^{\ell} {\ell\choose j}\s_1(\y \delta^j(x^i)\y^{\ell-j} \ot x\ot 1)\\
&=\y \sum_{j=0}^{\ell} {\ell\choose j}\s_1(\delta^j(x^i)\y^{\ell-j} \ot x\ot 1)\\*
&\qquad +\sum_{j=0}^{\ell} {\ell\choose j}\sum_{t=0}^{\ell-j} {\ell-j\choose t} \delta^j(x^i)
 G(\delta^t(x))\y^{\ell-j-t}\\
&=\y \s_1(\y^{\ell}x^i \ot x\ot 1)
 +\sum_{j=0}^{\ell}\sum_{t=0}^{\ell-j} {\ell\choose j} {\ell-j\choose t} \delta^j(x^i)
 G(\delta^t(x))\y^{\ell-j-t}.
\end{align*}

Hence, 
\begin{align*}
\s_{1}(\y^{\ell+1}  \s_0(x^k\ot 1)) &= \sum_{i=0}^{k-1} \s_1  (\y^{\ell+1}  x^i  \ot x\ot x^{k-i-1})\\
&=\y \s_{1}(\y^{\ell}\s_0(x^k\ot 1))\\*
&\qquad + \sum_{i=0}^{k-1}\sum_{j=0}^{\ell}\sum_{t=0}^{\ell-j} {\ell\choose j} {\ell-j\choose t} \delta^j(x^i)
 G(\delta^t(x))\y^{\ell-j-t}x^{k-i-1},
\end{align*}
and it remains to prove that $\displaystyle \sum_{i=0}^{k-1}\sum_{j=0}^{\ell}\sum_{t=0}^{\ell-j} {\ell\choose j} {\ell-j\choose t} \delta^j(x^i) G(\delta^t(x))\y^{\ell-j-t}x^{k-i-1}$ is equal to 
$\displaystyle \sum_{j=0}^\ell {\ell\choose j} G(\delta^j(x^k))\y^{\ell-j}$.

Using \eqref{eq:Ahcom}, we can write the former as 
\begin{equation*}
\sum_{i=0}^{k-1}\sum_{j=0}^{\ell}\sum_{t=0}^{\ell-j}\sum_{m=0}^{\ell-j-t} {\ell\choose j} {\ell-j\choose t} {\ell-j-t \choose m}\delta^j(x^i) G(\delta^t(x))\delta^m(x^{k-i-1})\y^{\ell-j-t-m}.
\end{equation*}
Let $a=j+t+m$. Notice that $0\leq a\leq \ell$ and that the sum above can be written as $\displaystyle \sum_{a=0}^{\ell}\phi(i, j, t) \y^{\ell-a}$, where 
\begin{equation*}
\phi(i, j, t)= \sum_{i=0}^{k-1}\sum_{j=0}^{a}\sum_{t=0}^{a-j} {\ell\choose j} {\ell-j\choose t} {\ell-j-t \choose a-j-t}\delta^j(x^i) G(\delta^t(x))\delta^{a-j-t}(x^{k-i-1}).
\end{equation*}
Therefore, we just need to prove that $\phi(i, j, t)={\ell\choose a}G(\delta^a(x^k))$. Since 
$$
{\ell\choose j} {\ell-j\choose t} {\ell-j-t \choose a-j-t}={\ell\choose a}{a\choose j}{a-j\choose t},
$$
by Lemma~\ref{L:longid} we deduce that 
\begin{align*}
\phi(i, j, t)&= {\ell\choose a}\sum_{i=0}^{k-1}\sum_{j=0}^{a}\sum_{t=0}^{a-j}  {a\choose j}{a-j\choose t} \delta^j(x^i) G(\delta^t(x))\delta^{a-j-t}(x^{k-i-1})\\
&={\ell\choose a}G(\delta^a(x^k)).
\end{align*}
Hence, the result is established.
\end{proof}

We are now able to determine closed formulas for $\s_{1}(\y^{\ell+1}\s_0(f\ot 1))$ and $\s_{1}(\y^{\ell+1}\ot x\ot 1)$.

\begin{prop}\label{P:form:s1}
For all $\ell\geq 0$ and $f\in\FF[x]$, we have:
\begin{equation*}
\s_{1}(\y^{\ell+1}\s_0(f\ot 1))= \sum_{j=0}^{\ell}\sum_{k=0}^{\ell-j} {\ell-k\choose j} \y^k G(\delta^j(f))\y^{\ell-j-k}.
\end{equation*}
In particular, taking $f=x$, we obtain the following explicit formula for $\s_{1}$:
\begin{equation*}
\s_{1}(\y^{\ell+1}\ot x\ot 1)= \sum_{j=0}^{\ell}\sum_{k=0}^{\ell-j} {\ell-k\choose j} \y^k G(\delta^j(x))\y^{\ell-j-k}.
\end{equation*}
\end{prop}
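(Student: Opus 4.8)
The plan is to establish the first identity by induction on $\ell\ge 0$, using as the inductive engine the recursion of Proposition~\ref{P:recsoneszero}, namely $\s_{1}(\y^{\ell+1}\s_0(f\ot 1))= \y \s_{1}(\y^{\ell}\s_0(f\ot 1))+\sum_{j=0}^\ell {\ell\choose j} G(\delta^j(f))\y^{\ell-j}$; the formula for $\s_{1}(\y^{\ell+1}\ot x\ot 1)$ is then simply the case $f=x$, since $\s_0(x\ot 1)=1\ot x\ot 1$.

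For the base case $\ell=0$, I would first observe that $\s_1(\s_0(f\ot 1))=0$: by linearity it suffices to take $f=x^k$, in which case $\s_0(x^k\ot1)$ is a sum of terms $x^i\ot x\ot x^{k-1-i}$, and each of these is killed because $\s_1$ is right $\A$-linear with $\s_1(1\ot x\ot1)=0$. Feeding this into Proposition~\ref{P:recsoneszero} with $\ell=0$ gives $\s_1(\y\s_0(f\ot1))=\y\cdot 0+G(f)=G(f)$, which is exactly the right-hand side of the claimed formula at $\ell=0$ (the lone term $j=k=0$).

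For the inductive step, assuming the formula at level $\ell$, apply Proposition~\ref{P:recsoneszero} at level $\ell+1$:
\[
\s_{1}(\y^{\ell+2}\s_0(f\ot 1))= \y \s_{1}(\y^{\ell+1}\s_0(f\ot 1))+\sum_{j=0}^{\ell+1} {\ell+1\choose j} G(\delta^j(f))\y^{\ell+1-j}.
\]
I would substitute the induction hypothesis into the first term on the right and multiply through by $\y$ on the left; after the shift $k\mapsto k+1$ in the inner index this produces $\sum_{j=0}^{\ell}\sum_{k=1}^{\ell+1-j} {\ell+1-k\choose j}\,\y^k G(\delta^j(f))\y^{\ell+1-j-k}$. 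The remaining sum $\sum_{j=0}^{\ell+1}{\ell+1\choose j}G(\delta^j(f))\y^{\ell+1-j}$ furnishes exactly the $k=0$ part, using $\binom{\ell+1}{j}=\binom{\ell+1-k}{j}\big|_{k=0}$ and noting that the top index $j=\ell+1$ contributes only at $k=0$. Adding the two pieces gives $\sum_{j=0}^{\ell+1}\sum_{k=0}^{\ell+1-j} {\ell+1-k\choose j}\,\y^k G(\delta^j(f))\y^{\ell+1-j-k}$, which is the asserted formula at $\ell+1$.

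No genuine difficulty remains at this stage: all the substantial work — in particular the combinatorial identity Lemma~\ref{L:longid} — has already been absorbed into Proposition~\ref{P:recsoneszero}, so what is left is essentially bookkeeping. The one point deserving care is the re-indexing of the double sum in the inductive step and the verification that the boundary terms (the index $k=0$ and the top index $j=\ell+1$) are accounted for with neither overlap nor omission; the special-case formula for $\s_{1}(\y^{\ell+1}\ot x\ot 1)$ is then immediate.
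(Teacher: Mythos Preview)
Your proposal is correct and follows essentially the same approach as the paper: both argue by induction on $\ell$, using Proposition~\ref{P:recsoneszero} as the inductive engine and then re-indexing the double sum so that the term coming from the recursion supplies the $k\geq 1$ part while the explicit sum $\sum_{j}\binom{\ell+1}{j}G(\delta^j(f))\y^{\ell+1-j}$ supplies the $k=0$ part. Your treatment of the base case is slightly more explicit than the paper's (you spell out why $\s_1(\s_0(f\ot 1))=0$), but the argument is otherwise the same.
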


\begin{proof}
If $\ell=0$, Proposition~\ref{P:recsoneszero} yields $\s_{1}(\y \s_0(f\ot 1))=G(f)$, which agrees with the formula we are proving. We proceed inductively, using Proposition~\ref{P:recsoneszero}:
\begin{align*}
\s_1(\y^{\ell+1}\s_0(f\ot 1))&=\y \s_1(\y^{\ell}\s_0(f\ot 1))+\sum_{j=0}^\ell {\ell\choose j} G\circ\delta^j(f)\y^{\ell-j} \\
&=\sum_{j=0}^{\ell-1}\sum_{k=0}^{\ell-j-1} {\ell-k-1\choose j} \y^{k+1} G(\delta^j(f))\y^{\ell-j-k-1}\\*
&\qquad +\sum_{j=0}^\ell {\ell\choose j} G\circ\delta^j(f)\y^{\ell-j}\\
&=\sum_{j=0}^{\ell-1}\sum_{k=1}^{\ell-j} {\ell-k\choose j} \y^{k} G(\delta^j(f))\y^{\ell-j-k}\\*
&\qquad +\sum_{j=0}^{\ell-1} {\ell\choose j} G\circ\delta^j(f)\y^{\ell-j} + {\ell\choose\ell} G\circ\delta^\ell(f)\\
&=\sum_{j=0}^{\ell}\sum_{k=0}^{\ell-j} {\ell-k\choose j} \y^{k} G(\delta^j(f))\y^{\ell-j-k}.
\end{align*}
\end{proof}

Finally, we can prove the main result of this section.

\begin{thm}\label{T:hom:main}
The right $\A$-module maps $\s_{-1}, \s_0$ and $\s_1$ form a contracting homotopy for \eqref{E:res:K}.
\end{thm}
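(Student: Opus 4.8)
The statement collects the four homotopy relations in \eqref{E:res:homotopies}, so the strategy is to check them one by one, using the work already done. The first two, $\mu\circ\s_{-1}=1_\A$ and $\s_{-1}\circ\mu+\dd_0\circ\s_0=1_{\A\ot\A}$, were flagged as routine at the beginning of Section~\ref{S:res}; I would dispatch $\mu\circ\s_{-1}=1_\A$ immediately from $\mu(1\ot 1)=1$, and verify $\s_{-1}\circ\mu+\dd_0\circ\s_0=1_{\A\ot\A}$ by a short direct computation on a spanning set $x^k\y^\ell\ot 1$ of the free right $\A$-module $\A\ot\A$, using the explicit formula for $\s_0$ and a telescoping of the sums $\sum_i \dd_0(x^i\ot x\ot x^{k-1-i}\y^\ell)$ and $\sum_j \dd_0(x^k\y^j\ot\y\ot\y^{\ell-1-j})$. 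The third relation, $\s_0\circ\dd_0+\dd_1\circ\s_1=1_{\A\ot\V\ot\A}$, is exactly the content of Lemma~\ref{L:rel2}, so it requires nothing further.

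The only genuinely new work is the fourth relation, $\s_1\circ\dd_1=1_{\A\ot\R\ot\A}$. Since $\A\ot\R\ot\A$ is free as a right $\A$-module on the elements $x^k\y^\ell\ot\rr\ot 1$, and $\s_1$, $\dd_1$ are right $\A$-module maps, it suffices to evaluate $\s_1(\dd_1(x^k\y^\ell\ot\rr\ot 1))$ and show it equals $x^k\y^\ell\ot\rr\ot 1$. Here I would exploit the derivation $G$ of \eqref{E:def:G} and the identity $G(f)=\tau\circ\s_0(f\ot 1)$ from the proof of Lemma~\ref{L:Gprop}, together with the relation $\dd_1(G(f))$ computed there, to rewrite $\dd_1(x^k\y^\ell\ot\rr\ot 1)$ in terms of the basic generator $1\ot\rr\ot 1$ and the terms controlled by $\s_0$; then apply $\s_1$ and invoke the closed formula for $\s_1(\y^{\ell+1}\s_0(f\ot 1))$ from Proposition~\ref{P:form:s1}, along with $\s_1(a\ot\y\ot b)=0$ and $\s_1(1\ot x\ot 1)=0$. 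Concretely, one reduces to showing $\s_1(\dd_1(\y^{\ell}\ot\rr\ot 1))=\y^\ell\ot\rr\ot 1$ (the $x^k$ factor being handled by $G$ being a derivation and the defining relation $\s_1(f\y^\ell\ot x\ot a)=f\s_1(\y^\ell\ot x\ot 1)a$), and this becomes an induction on $\ell$, feeding the recursion of Proposition~\ref{P:recsoneszero} against the recursive definition of $\s_1$ on $\y^{\ell+1}\ot x\ot 1$.

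The main obstacle is purely computational: matching, after applying $\s_1$ to $\dd_1(\y^\ell\ot\rr\ot 1)$, the triple sums produced by $\s_1(\y^{\ell}\s_0(\delta^j(\cdot)\ot 1))$ against the expansion of $\dd_1\circ\dd_1$-type cancellations, so that everything telescopes down to $\y^\ell\ot\rr\ot 1$. This is the same flavor of bookkeeping as in Lemma~\ref{L:rel2} and Proposition~\ref{P:recsoneszero}, and the combinatorial identity $\binom{\ell}{j}\binom{\ell-j}{t}\binom{\ell-j-t}{a-j-t}=\binom{\ell}{a}\binom{a}{j}\binom{a-j}{t}$ from Lemma~\ref{L:longid}, plus the generalized Leibniz rule for the derivations $\delta$ and $G$, should be exactly what makes the cross terms vanish. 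Since the exactness of \eqref{E:res:K} is already known a priori (its associated graded complex is exact, as noted via \cite{CS15}), one could alternatively argue more softly that the $\s_i$ assembled here must differ from a genuine contracting homotopy by a chain map nullhomotopic in low degrees; but I would prefer the direct verification, as the explicit formulas from Propositions~\ref{P:recsoneszero} and~\ref{P:form:s1} are precisely set up to make it go through.
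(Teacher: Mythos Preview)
Your plan is correct and aligns with the paper's own proof: the first two relations are routine, the third is Lemma~\ref{L:rel2}, and for $\s_1\circ\dd_1=1_{\A\ot\R\ot\A}$ one reduces to generators $\y^\ell\ot\rr\ot 1$ by left $\FF[x]$-linearity of both $\dd_1$ and $\s_1$, then expands $\s_1(\y^\ell\dd_1(1\ot\rr\ot1))=\s_1(\y^{\ell+1}\ot x\ot 1)-\s_1(\y^{\ell}\ot x\ot 1)\y-\s_1(\y^{\ell}\s_0(\delta(x)\ot1))$ and matches terms via Proposition~\ref{P:form:s1}. The only minor divergence is that the paper does not run a separate induction on $\ell$ here but instead applies the closed formula of Proposition~\ref{P:form:s1} directly and splits $\binom{\ell-k}{j}=\binom{\ell-k-1}{j}+\binom{\ell-k-1}{j-1}$ via Pascal's rule to produce exactly the three required pieces; the triple-binomial identity from Lemma~\ref{L:longid} is not needed at this stage (it was already absorbed into the proof of Proposition~\ref{P:form:s1}), and only $\delta(x)=h$ appears rather than higher $\delta^j$.
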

\begin{proof}
It remains to prove the identity $\s_1\circ \dd_1 =1_{\A\ot\R\ot\A}$ from \eqref{E:res:homotopies}, and it clearly suffices to check this identity on elements of the form $\y^\ell \ot \rr\ot 1$, as $\s_1$ is also a left $\FF[x]$-module homomorphism. The case $\ell=0$ is straightforward,
so assume that $\ell\geq 1$. Then
\begin{align*}
\s_1(\dd_1(\y^\ell\ot \rr\ot 1)) &=\s_1(\y^\ell\dd_1(1\ot \rr\ot 1))\\
&=\s_1(\y^{\ell+1}\ot x\ot 1)-\s_1(\y^\ell\ot x\ot 1)\y-\s_1(\y^\ell \s_0(\delta(x)\ot 1)),
\end{align*}
and by Proposition~\ref{P:form:s1}, we have
\begin{align*}
\s_1(\y^{\ell+1}\ot x\ot 1) &= \sum_{j=0}^{\ell}\sum_{k=0}^{\ell-j} {\ell-k\choose j} \y^k G(\delta^j(x))\y^{\ell-j-k}.
\end{align*}
Using adequate combinatorial identities, we obtain
\begin{align*}
\s_1(\y^{\ell+1}\ot x\ot 1) &= \sum_{j=1}^{\ell-1}\sum_{k=0}^{\ell-j-1} {\ell-k-1\choose j} \y^k G(\delta^j(x))\y^{\ell-j-k}\\*
&\qquad + \sum_{j=1}^{\ell}\sum_{k=0}^{\ell-j} {\ell-k-1\choose j-1} \y^k G(\delta^j(x))\y^{\ell-j-k}\\*
&\qquad + \sum_{k=0}^{\ell-1} {\ell-k-1\choose 0} \y^k G(\delta^0(x))\y^{\ell-k}\\*
&\qquad +\y^\ell G(x)\\
&= \sum_{j=0}^{\ell-1}\sum_{k=0}^{\ell-j-1} {\ell-k-1\choose j} \y^k G(\delta^j(x))\y^{\ell-j-k}\\*
&\qquad + \sum_{j=0}^{\ell-1}\sum_{k=0}^{\ell-j-1} {\ell-k-1\choose j} \y^k G(\delta^{j+1}(x))\y^{\ell-j-k-1}\\*
&\qquad +\y^\ell \ot \rr\ot 1 \\
&=\s_1(\y^\ell\ot x\ot 1)\y + \s_1(\y^\ell \s_0(\delta(x)\ot 1))+\y^\ell \ot \rr\ot 1,
\end{align*}
which proves the desired identity.
\end{proof}

\section{The Gerstenhaber bracket: general remarks}\label{S:Gerst}

The Hochschild cohomology $\hoch^{\bullet}(\A)=\bigoplus_{n\geq 0}\hoch^n(\A)$ has a rich structure, including an associative, graded-commutative product (relative to homological degree), given by the cup product, and also a graded Lie bracket $\gb{\, ,}$ of (homological) degree $-1$; these are related by the graded Poisson identity. In particular, the graded anti-symmetric property of $\gb{\, ,}$ means 
\begin{equation*}
\gb{\alpha, \beta}= -(-1)^{(m-1)(n-1)}\gb{\beta, \alpha}, \quad \mbox{for all $\alpha\in\hoch^m(\A)$ and $\beta\in\hoch^n(\A)$,} 
\end{equation*}
and there is a corresponding graded version of the Jacobi identity (see \cite{mG63}). Under this construction, $\hoch^{\bullet}(\A)$ becomes a \textit{Gerstenhaber algebra}. In particular, the Jacobi identity implies that $\hoch^{\bullet}(\A)$ is a Lie module for the Lie algebra $\hoch^{1}(\A)$, extending the usual Lie bracket of derivations on $\hoch^{1}(\A)$. 
In case $A$ is a smooth finitely-generated $\FF$-algebra  and $\FF$ is perfect, the Hochschild-Kostant-Rosenberg Theorem gives an isomorphism of Gerstenhaber algebras, telling that, in this situation, the Gerstenhaber bracket is the generalization to higher degrees of the Schouten-Nijenhuis bracket.

The Gerstenhaber structure of Hochschild cohomology is particularly interesting for us since in case $\chara(\FF)=0$ and $\gcd(h, h')\neq 1$, the description of $\hoch^{1}(\A)$ involves the Witt algebra $\W$. 
In prime characteristic, most of the computations of the Gerstenhaber structure in Hochschild cohomology concern group algebras and tame blocks, see for example \cite{BKL, S}.

Although the Gerstenhaber bracket does not depend on the chosen bimodule projective resolution of $\A$, it is in general difficult to compute it on an arbitrary resolution other than the bar resolution. In spite of this, we always have $\gb{\barr{D}, z}=\barr{D}(z)$ and $\gb{\barr{D}, \barr{D'}}=\barr{\gb{D, D'}}$ for $D, D'\in\der(\A)$ and $z\in\cent{}$, so it remains to compute $\gb{\hoch^{1}(\A), \hoch^{2}(\A)}$, which is what we undertake in this section. 
Notice that, in our case, we already have the contracting homotopy of the minimal resolution, from which the comparison maps can be obtained. Nevertheless, we will use an easier method that, for the family of algebras we consider, also needs the contracting homotopy.

To avoid cumbersome notation, we identify $D\in\der(\A)$ with its canonical image $\barr{D}\in\hoch^1(\A)$. We will often refer to the map $\gb{D, -}:\hoch^i(\A)\longrightarrow\hoch^i(\A)$ as the (Lie) action of $D\in\hoch^1(\A)$ on $\hoch^i(\A)$.

\subsection{The method of Su\'arez-\'Alvarez for computing $\gb{\hoch^{1}(\A), -}$}\label{SS:gb:MSAmethod}

In this subsection, we will describe a method devised by Su\'arez-\'Alvarez in \cite{mSA17} to compute the Gerstenhaber bracket $\gb{\hoch^{1}(\A), -}$ in terms of an arbitrary projective resolution of $\A$ as a bimodule. The reader is advised to consult \cite{mSA17} for further details and all the proofs.

Fix an $\FF$-algebra $\BB$ and a derivation $\psi:\BB\longrightarrow\BB$. Given a left $\BB$-module $M$, we say that a linear map $f:M\longrightarrow M$ satisfying $f(bm) = bf(m) + \psi(b)m$ for all $b\in\BB$ and $m\in M$ is a \textit{$\psi$-operator} on $M$. Given a projective resolution 
\begin{equation*}
\begin{tikzcd}[row sep=5ex, column sep = 4em]
\cdots\arrow[r] & P_2 \arrow[r, "d_2"]  & P_1 \arrow[r, "d_1"]  & P_0 \arrow[r, "\epsilon"]  &M \arrow[r]  & 0
\end{tikzcd}
\end{equation*}
of $M$, a \textit{$\psi$-lifting} of the $\psi$-operator $f$ to $P_\bullet$ is a sequence $f_\bullet=\left( f_i\right)_{i\geq 0}$ of $\psi$-operators $f_i:P_i\longrightarrow P_i$ such that the following diagram commutes:

\begin{equation*}
\begin{tikzcd}[row sep=5ex, column sep = 4em]
\cdots\arrow[r] & P_2 \arrow[r, "d_2"] \arrow[d, "f_2"] & P_1 \arrow[r, "d_1"] \arrow[d, "f_1"] & P_0 \arrow[r, "\epsilon"] \arrow[d, "f_0"] &M \arrow[r] \arrow[d, "f"]  & 0\\
\cdots\arrow[r] & P_2 \arrow[r, "d_2"]  & P_1 \arrow[r, "d_1"]  & P_0 \arrow[r, "\epsilon"]  &M \arrow[r]  & 0.
\end{tikzcd}
\end{equation*}
It was shown in \cite[Lem.\ 1.4]{mSA17} that every $\psi$-operator $f$ admits a unique (up to $\BB$-module homotopy) $\psi$-lifting.

Given a $\psi$-operator $f$ and a $\psi$-lifting $f_\bullet$ of $f$ to $P_\bullet$, define a sequence $f^{\sharp}_\bullet=\left( f^\sharp_i\right)_{i\geq 0}$ of linear maps $f^\sharp_i:\Hom_\BB(P_i, M)\longrightarrow\Hom_\BB(P_i, M)$ by
\begin{equation*}
f^\sharp_i(\phi)(p)=f(\phi(p))-\phi(f_i(p)), 
\end{equation*}
for $\phi\in\Hom_\BB(P_i, M)$ and $p\in P_i$. In fact, $f^{\sharp}_\bullet$ is an endomorphism of the complex of vector spaces $\Hom_\BB(P_\bullet, M)$ and the induced map on cohomology 
\begin{equation*}
\nabla^\bullet_{f,P_\bullet}: \mathsf{H}(\Hom_\BB(P_\bullet,M)) \longrightarrow \mathsf{H}(\Hom_\BB(P_\bullet,M)) 
\end{equation*}
depends only on $f$ and not on the choice of $\psi$-lifting $f_\bullet$. What's more, noticing that $\mathsf{H}(\Hom_\BB(P_\bullet,M))$ is canonically isomorphic to $\mathsf{Ext}^{\bullet}_\BB(M, M)$, we obtain a canonical morphism of graded vector spaces
\begin{equation*}
\nabla^\bullet_{f}: \mathsf{Ext}^{\bullet}_\BB(M, M) \longrightarrow \mathsf{Ext}^{\bullet}_\BB(M, M) 
\end{equation*}
which depends only on $f$ and not on the chosen projective resolution of $M$ (see \cite[Thm.\ A]{mSA17}).

Returning to the problem at hand, which is the computation of the bracket $\gb{\hoch^{1}(\A), -}$ in terms of a chosen bimodule projective resolution $\mu:P_\bullet\twoheadrightarrow \A$ of $\A$, set $\BB=\A^{e}$ and $M=\A$, so that $\mu:P_\bullet\twoheadrightarrow \A$ can be identified with a projective resolution of $\A$ as a left $\BB$-module. Given a derivation $D$ of $\A$, construct a new derivation $D^e=D\ot 1_\A+1_\A\ot D$ of $\BB$. It can be readily seen that $D$ is a $D^e$-operator on $\A$. Since $\mathsf{Ext}^{\bullet}_\BB(\A, \A)$ is naturally identified with the Hochschild cohomology $\hoch^{\bullet}(\A)$, the above construction yields a map $\nabla^\bullet_{D}: \hoch^{\bullet}(\A) \longrightarrow \hoch^{\bullet}(\A)$, which by \cite[Sec.\ 2.2]{mSA17} turns out to be $\gb{D, -}$ and which can be computed using any bimodule projective resolution of $\A$, provided that a $D^e$-lifting $D_\bullet$ of $D$ to the given resolution is found.

Going back to the case under study, with $\A=\A_h$, $\epsilon=\mu$ (the multiplication map), $P_0=\A\ot\A$, $P_1=\A\ot\V\ot\A$ and $P_2=\A\ot\R\ot\A$, it can be checked that $D\circ\mu=\mu\circ D^e$ and $D^e$ is trivially a $D^e$-operator on $\A\ot\A$, so we can choose $D_0=D^e$. Taking $i=2$ and using the map $\rho_2$ from Section~\ref{S:res} to identify $\hoch^2(\A)$ with a homomorphic image of $\A$, we obtain the formula describing the Lie action of $\hoch^1(\A)$ on $\hoch^2(\A)$:
\begin{equation}\label{E:SS:MSAmethod:gb1}
\gb{D, a}=D(a)-\chi_a(D_2(1\ot\rr\ot 1)), 
\end{equation}
for $a\in \A$ and $D\in\der(\A)$, where $\chi_a\in \Hom_{\A^e}(\A\ot\R\ot\A, \A)$ is defined by $\chi_a(1\ot\rr\ot 1)=a$.

\subsection{The $D^e$-lifting of $D$ to \eqref{E:res:K}}\label{SS:gb:lifting D_2}

In order to make use of \eqref{E:SS:MSAmethod:gb1}, it remains to determine the $D^e$-lifting $D_2$ of $D$, which we do in this subsection. We begin with a few general observations aimed at simplifying computations, then we determine the $D^e$-liftings $D_1$ and $D_2$.

The proof of the lemma that follows is standard and is thus omitted.

\begin{lemma}\label{L:SS:lifting D_2:general}
Let $\BB$ be an algebra, $\psi:\BB\longrightarrow\BB$ a derivation, $M$ and $N$ left $\BB$-modules, $X\subseteq M$ a generating set for $M$ as a $\BB$-module and $Y\subseteq \BB$ a generating set for $\BB$ as a vector space. 
\begin{enumerate}[label=\textup{(\alph*)}]
\item If $X$ is a free $\BB$-basis for $M$, then for any function $f':X\longrightarrow M$ there is a unique $\psi$-operator $f:M\longrightarrow M$ such that $\eval{f}{X}=f'$.\label{L:SS:lifting D_2:general:1}
\item Let $\phi:M\longrightarrow N$ be a morphism of $B$-modules and let $f:M\longrightarrow M$ and $g:N\longrightarrow N$ be $\psi$-operators. If $\eval{g\circ \phi}{X}=\eval{\phi\circ f}{X}$, then the following square commutes:
\begin{equation*}
\begin{tikzcd}[row sep=5ex, column sep = 4em]
 M \arrow[r, "\phi"] \arrow{d}[left]{f} &N \arrow[d, "g"] \\
M \arrow[r, "\phi"] &N.
\end{tikzcd}
\end{equation*}
\label{L:SS:lifting D_2:general:2}
\item If $f:M\longrightarrow M$ is a linear map such that $f(bm) = bf(m) + \psi(b)m$ for all $b\in Y\subseteq \BB$ and all $m\in X\subseteq M$, then $f$ is a $\psi$-operator.
\label{L:SS:lifting D_2:general:3}
\end{enumerate}
\end{lemma}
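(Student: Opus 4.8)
The plan is to establish the three parts in turn; each is an elementary manipulation of the defining identity $f(bm)=bf(m)+\psi(b)m$, so the write-up can be kept short.

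For \textup{(a)}, since $X$ is a free $\BB$-basis of $M$ every $m\in M$ has a unique expression $m=\sum_i b_ix_i$ with $b_i\in\BB$ and $x_i\in X$, and I would simply \emph{define} $f(m)=\sum_i\bigl(b_if'(x_i)+\psi(b_i)x_i\bigr)$. Freeness makes this well defined, and linearity of $\psi$ and of the module structure makes $f$ linear. That $f$ is a $\psi$-operator is then a direct check: for $b\in\BB$ one has $bm=\sum_i(bb_i)x_i$ and the Leibniz rule $\psi(bb_i)=b\psi(b_i)+\psi(b)b_i$, whence $f(bm)=\sum_i\bigl(bb_if'(x_i)+b\psi(b_i)x_i+\psi(b)b_ix_i\bigr)=bf(m)+\psi(b)m$. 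Uniqueness is forced because any $\psi$-operator $g$ agreeing with $f'$ on $X$ must satisfy $g\bigl(\sum_ib_ix_i\bigr)=\sum_i\bigl(b_ig(x_i)+\psi(b_i)x_i\bigr)$, so $g=f$.

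For \textup{(b)}, the key observation is that both composites satisfy the \emph{same} twisted identity with inhomogeneous term $\psi(b)\phi(m)$: indeed $(\phi\circ f)(bm)=\phi(bf(m)+\psi(b)m)=b(\phi f)(m)+\psi(b)\phi(m)$ and $(g\circ\phi)(bm)=g(b\phi(m))=b(g\phi)(m)+\psi(b)\phi(m)$. Hence the difference $\Delta=g\circ\phi-\phi\circ f$ is an honest $\BB$-module homomorphism $M\to N$, and since $\Delta$ vanishes on the generating set $X$ by hypothesis it vanishes identically, giving $g\circ\phi=\phi\circ f$. For \textup{(c)}, I would extend the relation $R(b,m)\colon f(bm)=bf(m)+\psi(b)m$ in two steps. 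First, for fixed $m\in X$, the set of $b$ satisfying $R(b,m)$ is a linear subspace of $\BB$ because each of $b\mapsto f(bm)$, $b\mapsto bf(m)$, $b\mapsto\psi(b)m$ is linear; as it contains the spanning set $Y$, it is all of $\BB$. Second, for an arbitrary $m=\sum_ic_ix_i\in M$ with $c_i\in\BB$, $x_i\in X$, I expand both sides of $R(b,m)$ using the already-proven instances $R(bc_i,x_i)$ and $R(c_i,x_i)$ together with $\psi(bc_i)=b\psi(c_i)+\psi(b)c_i$; the two expansions then coincide, so $f$ is a $\psi$-operator.

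None of this presents a genuine obstacle — the lemma is deliberately stated as routine — and the only point needing a little care is the order of the two reductions in \textup{(c)}: one must first pass to arbitrary $b\in\BB$ (with $m$ still restricted to $X$) before expanding a general $m$ over the module generators, since that expansion invokes $R(bc_i,x_i)$ with the \emph{product} $bc_i$, which need not lie in $Y$.
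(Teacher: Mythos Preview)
Your proof is correct and is precisely the standard argument the paper has in mind; indeed, the paper omits the proof entirely, remarking only that it is standard. Your care in part \textup{(c)} about the order of the two reductions---first extending $b$ from $Y$ to all of $\BB$, then $m$ from $X$ to all of $M$---is exactly the subtlety one has to watch, and you handle it correctly.
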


Throughout the rest of this subsection, fix $D\in\der(\A)$ and let $D_0=D^e:\A^{e}\longrightarrow\A^{e}$. Next we define a $D_0$-lifting $D_1:\A\ot\V\ot\A\longrightarrow \A\ot\V\ot\A$ in terms of the homotopy $\s_0$.

\begin{lemma}\label{L:SS:lifting D_1}
Let $D_1(a\ot v\ot b)=a\s_0(D(v)\ot b) + D(a)\ot v\ot b + a\ot v\ot D(b)$, for all $a, b\in\A$ and all $v\in\V=\FF x\oplus\FF\y$. Then, extending linearly to $\A\ot\V\ot\A$, this rule defines a $D_0$-operator such that $D_0\circ\dd_0=\dd_0 \circ D_1$. 
\end{lemma}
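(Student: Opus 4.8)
\textbf{Proof plan for Lemma~\ref{L:SS:lifting D_1}.}

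The plan is to verify the two assertions separately, using Lemma~\ref{L:SS:lifting D_2:general} to reduce each to a finite check on generators. First I would show that $D_1$ is a well-defined $D_0$-operator. Since $\A\ot\V\ot\A$ is free as a left $\A^e$-module on the set $X=\{1\ot x\ot 1,\ 1\ot\y\ot 1\}$, by part~\ref{L:SS:lifting D_2:general:1} there is a unique $D_0$-operator agreeing with any prescribed function on $X$; the defining formula gives $D_1(1\ot v\ot 1)=\s_0(D(v)\ot 1)$ on $X$, so it suffices to check that the stated global rule really is that unique $D_0$-operator, i.e.\ that it satisfies $D_1\big((a\ot a')\cdot(1\ot v\ot 1)\big)=(a\ot a')\cdot D_1(1\ot v\ot 1)+D_0(a\ot a')\cdot(1\ot v\ot 1)$ for $a,a'\in\A$. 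Writing $(a\ot a')\cdot(1\ot v\ot 1)=a\ot v\ot a'$ and $D_0(a\ot a')=D(a)\ot a'+a\ot D(a')$, this is exactly the displayed formula, so the verification is immediate from the definition; alternatively one invokes part~\ref{L:SS:lifting D_2:general:3} with $Y$ a spanning set of $\A^e$. Either way this step is essentially bookkeeping.

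Next I would prove the commutativity $D_0\circ\dd_0=\dd_0\circ D_1$. By Lemma~\ref{L:SS:lifting D_2:general}\,\ref{L:SS:lifting D_2:general:2}, since $\dd_0$ is an $\A^e$-module map and both composites are $D_0$-operators, it is enough to check equality on the generators $1\ot v\ot 1$ for $v\in\V$. On one side, $\dd_0\circ D_1(1\ot v\ot 1)=\dd_0\big(\s_0(D(v)\ot 1)+D(v)\text{-terms}\big)$; here the terms $D(1)\ot v\ot 1$ and $1\ot v\ot D(1)$ vanish because $D(1)=0$, so $\dd_0\circ D_1(1\ot v\ot 1)=\dd_0\big(\s_0(D(v)\ot 1)\big)$. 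On the other side, $D_0\circ\dd_0(1\ot v\ot 1)=D_0(v\ot 1-1\ot v)=D(v)\ot 1-1\ot D(v)$. So the required identity reduces to
\begin{equation*}
\dd_0\big(\s_0(\gamma\ot 1)\big)=\gamma\ot 1-1\ot\gamma\qquad\text{for }\gamma=D(v)\in\A.
\end{equation*}
But the second homotopy relation in \eqref{E:res:homotopies}, namely $\s_{-1}\circ\mu+\dd_0\circ\s_0=1_{\A\ot\A}$, applied to $\gamma\ot 1$ gives $\s_{-1}(\gamma)+\dd_0(\s_0(\gamma\ot 1))=\gamma\ot 1$, and since $\s_{-1}(\gamma)=\gamma\cdot\s_{-1}(1)=\gamma\ot 1$ is not quite what we want, I would instead apply the relation to $1\ot\gamma$ or, more cleanly, note that $\dd_0(\s_0(\gamma\ot 1))=\gamma\ot 1-\s_{-1}(\mu(\gamma\ot 1))=\gamma\ot 1-\s_{-1}(\gamma)=\gamma\ot 1-\gamma\ot 1=0$, which is wrong; the correct bookkeeping is that $\s_0$ is only a \emph{right} $\A$-module map, so one must expand $\s_0(\gamma\ot 1)$ via the explicit formula for $\s_0$ and the right-linearity, and then $\dd_0$ of it telescopes. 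Concretely, for $\gamma=x^k\y^\ell$ one has $\s_0(\gamma\ot 1)=\sum_i x^i\ot x\ot x^{k-1-i}\y^\ell+\sum_j x^k\y^j\ot\y\ot\y^{\ell-1-j}$, and applying $\dd_0$ to each summand and summing the telescoping cancellations yields exactly $\gamma\ot 1-1\ot\gamma$. This is the computational heart of the proof.

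The main obstacle, then, is simply being careful about which maps are bimodule maps and which are only one-sided: $\dd_0,\dd_1,\mu$ are $\A$-bimodule maps while $\s_{-1},\s_0,\s_1$ are only right $\A$-module maps, so the reduction to generators must be applied with the correct module structure, and the telescoping identity $\dd_0\circ\s_0+\s_{-1}\circ\mu=1$ from \eqref{E:res:homotopies} (which the excerpt states is "easy to prove and left as an exercise") is what one ultimately leans on. Since $D(1)=0$ kills all the outer tensor-leg contributions on generators, no genuinely new computation beyond that homotopy identity is needed, and the lemma follows.
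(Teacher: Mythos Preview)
Your overall strategy matches the paper's exactly: use Lemma~\ref{L:SS:lifting D_2:general}\ref{L:SS:lifting D_2:general:1} to reduce the $D_0$-operator claim to the value on $X=\{1\ot x\ot 1,\,1\ot\y\ot 1\}$, and use Lemma~\ref{L:SS:lifting D_2:general}\ref{L:SS:lifting D_2:general:2} plus the second homotopy identity in \eqref{E:res:homotopies} to check $D_0\circ\dd_0=\dd_0\circ D_1$ on generators.

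The only genuine slip is in your computation of $\s_{-1}(\gamma)$. You write $\s_{-1}(\gamma)=\gamma\cdot\s_{-1}(1)=\gamma\ot 1$, but $\s_{-1}$ is a \emph{right} $\A$-module map, not a left one: writing $\gamma=1\cdot\gamma$ with the right action, one gets $\s_{-1}(\gamma)=\s_{-1}(1)\cdot\gamma=(1\ot 1)\cdot\gamma=1\ot\gamma$. With this correction, the homotopy relation $\s_{-1}\circ\mu+\dd_0\circ\s_0=1_{\A\ot\A}$ applied to $\gamma\ot 1$ gives directly
\[
\dd_0(\s_0(\gamma\ot 1))=\gamma\ot 1-\s_{-1}(\gamma)=\gamma\ot 1-1\ot\gamma,
\]
which is exactly what you need, and is precisely how the paper finishes. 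Your detour through an explicit telescoping expansion of $\s_0(x^k\y^\ell\ot 1)$ is therefore unnecessary (though it would also work); the ``obstacle'' you identify about one-sidedness is real, but you located it on the wrong map.
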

\begin{proof}
Define first $D_1(1\ot v\ot 1)=\s_0(D(v)\ot 1)$ for $v\in\{ x, \y\}$. Since $\{ 1\ot x\ot 1, 1\ot \y\ot 1\}$ is a free basis for $\A\ot\V\ot\A$ as an $\A^e$-module, Lemma~\ref{L:SS:lifting D_2:general}\ref{L:SS:lifting D_2:general:1} guarantees the existence of a unique $D_0$-operator, which we still denote by $D_1$, defined on $\A\ot\V\ot\A$ and extending the above rule. 

First, notice that by linearity of $D$ and $\s_0$, one has $D_1(1\ot v\ot 1)=\s_0(D(v)\ot 1)$ for all $v\in\V$. Given $a, b\in\A$, the definition of a $D_0$-operator implies that 
\begin{align*}
D_1(a\ot v\ot b)&=D_1((a\ot b)(1\ot v\ot 1))\\ &= (a\ot b)D_1(1\ot v\ot 1) + D_0(a\ot b)(1\ot v\ot 1)\\ &=a\s_0(D(v)\ot 1)b+D(a)\ot v\ot b +a\ot v\ot D(b).
\end{align*}
As $\s_0$ is a right $\A$-module map, this expression matches the one in the statement.

Now, by Lemma~\ref{L:SS:lifting D_2:general}\,\ref{L:SS:lifting D_2:general:2}, it suffices to check the equality $D_0\circ\dd_0=\dd_0 \circ D_1$ on elements of the form $1\ot v\ot 1$. Thus, using the second identity in \eqref{E:res:homotopies},
we establish the final claim:
\begin{align*}
\dd_0 \circ D_1(1\ot v\ot 1)&=\dd_0(\s_0(D(v)\ot 1))=D(v)\ot 1 - \s_{-1}\circ\mu(D(v)\ot 1)\\
&=D(v)\ot 1 - \s_{-1}(D(v))=D(v)\ot 1 - 1\ot D(v)\\ &=D_0(v\ot 1-1\ot v)=D_0\circ\dd_0(1\ot v\ot 1).
\end{align*}
\end{proof}

Before we proceed to define the $D_0$-lifting $D_2$, we prove some auxiliary relations which will simplify several expressions, including one for $D_2(1\ot\rr\ot 1)$.

\begin{lemma}\label{L:SS:lifting D_2:simp}
Let $g\in\FF[x]$, $\alpha\in\A\ot\V\ot\A$, $b\in\A$ and $k, \ell\geq 0$. The following hold:
\begin{enumerate}[label=\textup{(\alph*)}]
\item $\s_1(g\alpha)=g\s_1(\alpha)$;\label{L:SS:lifting D_2:simp:1}
\item $\s_1\circ\s_0=0$;\label{L:SS:lifting D_2:simp:2}
\item $\s_1(\y \s_0(g\y^\ell \ot b))=G(g)\y^\ell b$, where $G$ is given in \eqref{E:def:G};\label{L:SS:lifting D_2:simp:3}
\item $\s_1\circ D_1\circ \s_0 (x^k \ot 1)=\sum_{i=1}^{k-1}\s_1(D(x^i)\ot x\ot x^{k-i-1})$, where this sum is understood to be $0$ in case $k\in\{0, 1\}$.\label{L:SS:lifting D_2:simp:4}
\end{enumerate} 
\end{lemma}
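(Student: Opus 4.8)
The plan is to prove the four identities of Lemma~\ref{L:SS:lifting D_2:simp} in order, exploiting that $\s_1$ is a left $\FF[x]$-module map and a right $\A$-module map, and that it vanishes on the relevant ``$\y$-slots''. For part \ref{L:SS:lifting D_2:simp:1}, I would first recall that $\s_1$ is a right $\A$-module map, so it suffices to check $\s_1(g\alpha)=g\s_1(\alpha)$ on elements of the form $\alpha = a\ot v\ot 1$ with $v\in\V$; for $v=\y$ both sides are $0$ by definition, and for $v=x$, writing $a = f\y^\ell$ with $f\in\FF[x]$, the recursive definition of $\s_1(f\y^\ell\ot x\ot 1) = f\,\s_1(\y^\ell\ot x\ot 1)$ together with the fact that $gf\in\FF[x]$ gives the claim immediately. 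Part \ref{L:SS:lifting D_2:simp:2} follows because $\s_0$ lands in the subspace spanned by elements $a\ot x\ot b$ and $a\ot \y\ot b$ with $a$ a product $x^i$ or $x^k\y^j$; applying $\s_1$ to $a\ot\y\ot b$ gives $0$, and to $x^i\ot x\ot b$ also gives $0$ since $\s_1(1\ot x\ot 1)=0$ and $\s_1$ is a left $\FF[x]$- and right $\A$-module map.

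For part \ref{L:SS:lifting D_2:simp:3}, I would expand $\s_0(g\y^\ell\ot b) = g\,\s_0(\y^\ell\ot b) + \s_0(g\ot \y^\ell b)$ using Lemma~\ref{L:sm1s0}\,\ref{L:sm1s0:2}; since $\s_0(g\ot \y^\ell b)$ is built from terms $x^i\ot x\ot x^{k-i-1}\y^\ell b$, multiplying by $\y$ on the left and then applying $\s_1$ will produce the $G$-type terms. Concretely, $\s_1(\y\s_0(g\ot\y^\ell b))$ matches the recursion formula in the definition of $\s_1$ (the $\ell=0$ case of Proposition~\ref{P:form:s1}, essentially $\s_1(\y\s_0(g\ot 1)) = G(g)$), and the term $\s_1(\y g\s_0(\y^\ell\ot b))$ vanishes because $\s_0(\y^\ell\ot b)$ only involves $\y$-slot tensors on which $\s_1$ is zero, and $\y g = g\y + \delta(g)$ keeps us in $\FF[x]\cdot(\text{$\y$-slot terms})$. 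The cleanest route is probably to invoke Proposition~\ref{P:form:s1} directly with $f = g$ and $\ell$ replaced appropriately, or simply to note $\s_1(\y\s_0(g\ot 1)) = G(g)$ and then use that $\s_1$ is a right $\A$-module map to append $\y^\ell b$.

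Part \ref{L:SS:lifting D_2:simp:4} is the one requiring the most care. I would start from $\s_0(x^k\ot 1) = \sum_{i=0}^{k-1}x^i\ot x\ot x^{k-1-i}$, apply $D_1$ using the formula from Lemma~\ref{L:SS:lifting D_1}, namely $D_1(x^i\ot x\ot x^{k-1-i}) = x^i\s_0(D(x)\ot x^{k-1-i}) + D(x^i)\ot x\ot x^{k-1-i} + x^i\ot x\ot D(x^{k-1-i})$, and then apply $\s_1$ term by term. The first family of terms, $x^i\s_0(D(x)\ot x^{k-1-i})$, lies in the image of $\s_0$, so $\s_1$ kills it by part \ref{L:SS:lifting D_2:simp:2} (using also part \ref{L:SS:lifting D_2:simp:1} to pull out $x^i$); the third family, $x^i\ot x\ot D(x^{k-1-i})$, gives $x^i\s_1(1\ot x\ot 1)D(x^{k-1-i}) = 0$ since $\s_1(1\ot x\ot 1)=0$. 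What survives is $\sum_{i=0}^{k-1}\s_1(D(x^i)\ot x\ot x^{k-1-i})$, and the $i=0$ term is $\s_1(D(1)\ot x\ot x^{k-1}) = 0$ because $D(1)=0$, leaving exactly $\sum_{i=1}^{k-1}\s_1(D(x^i)\ot x\ot x^{k-i-1})$, with the empty-sum convention for $k\in\{0,1\}$ handled directly. The main obstacle is bookkeeping: ensuring that applying $\s_1$ commutes correctly past the left $\FF[x]$-factors $x^i$ (needs part \ref{L:SS:lifting D_2:simp:1}) and that $D_1$ is applied before $\s_1$ to each summand, and being careful that $D(x^i)$ is a general element of $\A$ (not necessarily in $\FF[x]$), so the terms $\s_1(D(x^i)\ot x\ot x^{k-i-1})$ genuinely cannot be simplified further at this stage — they are left as they are, to be evaluated in the subsequent computation of $D_2(1\ot\rr\ot 1)$.
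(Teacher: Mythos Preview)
Your proposal is correct and follows essentially the same approach as the paper. Parts \ref{L:SS:lifting D_2:simp:1} and \ref{L:SS:lifting D_2:simp:2} are dispatched just as the paper does (``trivially from the definitions''); for \ref{L:SS:lifting D_2:simp:3} both you and the paper expand $\s_0(g\y^\ell\ot b)$ via Lemma~\ref{L:sm1s0}\,\ref{L:sm1s0:2}, kill the $\y$-slot contributions, and reduce to $\s_1(\y\s_0(g\ot 1))=G(g)$ (the $\ell=0$ case of Proposition~\ref{P:recsoneszero}); and for \ref{L:SS:lifting D_2:simp:4} your term-by-term analysis of $D_1$ applied to each $x^i\ot x\ot x^{k-1-i}$---killing the $\s_0$-image terms by \ref{L:SS:lifting D_2:simp:2}, the third family by $\s_1(1\ot x\ot 1)=0$, and the $i=0$ term by $D(1)=0$---is exactly the computation the paper performs, only spelled out more fully.
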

\begin{proof}
Both \ref{L:SS:lifting D_2:simp:1} and \ref{L:SS:lifting D_2:simp:2} follow trivially from the definitions, so we proceed to prove \ref{L:SS:lifting D_2:simp:3}. As before, we can assume that $b=1$. Furthermore, using \ref{L:SS:lifting D_2:simp:1}, \ref{L:SS:lifting D_2:simp:2}, Lemma~\ref{L:sm1s0}\ref{L:sm1s0:2}, the definition of $\s_1$ and Proposition~\ref{P:recsoneszero}, we get:
\begin{align*}
\s_1(\y \s_0(g\y^\ell \ot 1))&=\s_1(\y (g\s_0(\y^\ell \ot 1)+\s_0(g \ot \y^\ell))) \\
&=\s_1(g\y\s_0(\y^\ell \ot 1))+g'h\s_1(\s_0(\y^\ell \ot 1))+\s_1(\y\s_0(g \ot 1))\y^\ell\\
&=g\s_1(\y\s_0(\y^\ell \ot 1))+\s_1(\y\s_0(g \ot 1))\y^\ell\\
&=G(g)\y^\ell.
\end{align*}

Finally, for the proof of \ref{L:SS:lifting D_2:simp:4} we have, using the definition of $D_1$, parts \ref{L:SS:lifting D_2:simp:1} and \ref{L:SS:lifting D_2:simp:2} and the definition of $\s_1$:
\begin{align*}
\s_1\circ D_1\circ \s_0 (x^k \ot 1)&= \sum_{i=0}^{k-1}\s_1\circ D_1(x^i\ot x \ot x^{k-i-1})\\
&=\sum_{i=0}^{k-1}\s_1(D(x^i)\ot x\ot x^{k-i-1})\\
&=\sum_{i=1}^{k-1}\s_1(D(x^i)\ot x\ot x^{k-i-1}).
\end{align*}
\end{proof}

Motivated by Lemma~\ref{L:SS:lifting D_2:simp}\ref{L:SS:lifting D_2:simp:3}, we extend the map $G$ linearly to $\A$, by setting
\begin{equation}\label{D:SS:extend:G}
 G(f\y^\ell)=G(f)\y^\ell, \qquad \mbox{for all $f\in\FF[x]$ and all $\ell\geq 0$.}
\end{equation}
Thus, we can rewrite Lemma~\ref{L:SS:lifting D_2:simp}\ref{L:SS:lifting D_2:simp:3} as 
\begin{equation}\label{L:SS:extend:G:new:c}
\s_1(\y \s_0(a \ot b))=G(a)b, \qquad \mbox{for all $a, b\in\A$.}
\end{equation}

We are now ready to define the $D_0$-operator $D_2$ in terms of $D_1$ and the homotopy $\s_1$.

\begin{lemma}\label{L:SS:lifting D_2}
There is a unique $D_0$-operator $D_2:\A\ot\R\ot\A\longrightarrow\A\ot\R\ot\A$ such that $D_2(1\ot \rr\ot 1)=\s_1\circ D_1\circ \dd_1(1\ot\rr\ot 1)$. Then $D_1\circ\dd_1=\dd_1 \circ D_2$ and
\begin{equation}\label{D:SS:lifting D_2:def:D_2}
D_2(1\ot \rr\ot 1)=G(D(x))+\s_1(D(\y)\ot x\ot 1)-\s_1\circ D_1\circ\s_0(h\ot 1).
\end{equation}
\end{lemma}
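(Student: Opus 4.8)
The plan is to follow the recipe of Su\'arez-\'Alvarez as set up in Subsection~\ref{SS:gb:MSAmethod}: since $\dd_1(1\ot\rr\ot 1)$ is the explicit element listed at the start of Section~\ref{S:res}, apply $D_1$ termwise and then $\s_1$. First I would invoke Lemma~\ref{L:SS:lifting D_2:general}\ref{L:SS:lifting D_2:general:1}: because $\{1\ot\rr\ot 1\}$ is a free $\A^e$-basis of $\A\ot\R\ot\A$, prescribing the image of $1\ot\rr\ot 1$ determines a unique $D_0$-operator $D_2$. The commutativity $D_1\circ\dd_1=\dd_1\circ D_2$ then follows from Lemma~\ref{L:SS:lifting D_2:general}\ref{L:SS:lifting D_2:general:2}, since it need only be checked on the single generator $1\ot\rr\ot 1$, and there it holds by construction once we know $\s_1\circ D_1\circ\dd_1(1\ot\rr\ot 1)$ is actually a $\dd_1$-preimage of $D_1\circ\dd_1(1\ot\rr\ot 1)$; but that is automatic from the homotopy identity $\s_0\circ\dd_0+\dd_1\circ\s_1=1$ of Theorem~\ref{T:hom:main} together with $\dd_0\circ(D_1\circ\dd_1)=D_0\circ\dd_0\circ\dd_1=0$ (using Lemma~\ref{L:SS:lifting D_1}), so $D_1\circ\dd_1(1\ot\rr\ot 1)\in\ker\dd_0=\im\dd_1$ and the $\s_0\dd_0$ term drops out. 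This is the standard part; it could even be cited as "the proof is standard" in the spirit of Lemma~\ref{L:SS:lifting D_2:general}.

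The substantive part is deriving formula \eqref{D:SS:lifting D_2:def:D_2}. Recall
\begin{equation*}
\dd_1(1\ot\rr\ot 1)=1\ot\y\ot x+\y\ot x\ot 1-1\ot x\ot\y-x\ot\y\ot 1-\s_0(h\ot 1).
\end{equation*}
Apply $D_1$ to each of the five terms using the formula $D_1(a\ot v\ot b)=a\s_0(D(v)\ot b)+D(a)\ot v\ot b+a\ot v\ot D(b)$ from Lemma~\ref{L:SS:lifting D_1}, and then apply $\s_1$. The point is that $\s_1$ kills many of the resulting summands: by the defining relations of $\s_1$ we have $\s_1(a\ot\y\ot b)=0$ and $\s_1(1\ot x\ot 1)=0$, so any term of the form $(\text{const})\ot x\ot(\text{const})$ or $(\cdot)\ot\y\ot(\cdot)$ that survives with the right shape vanishes. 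Concretely: from $1\ot\y\ot x$ one gets $\s_0(D(\y)\ot x)+1\ot\y\ot D(x)$; the second term dies under $\s_1$ and for the first I would use that $\s_1\circ\s_0=0$ from Lemma~\ref{L:SS:lifting D_2:simp}\ref{L:SS:lifting D_2:simp:2} — wait, more carefully, $\s_0(D(\y)\ot x)$ is not obviously in the image of $\s_0$ applied to something times $x$, so instead I track it as part of a telescoping cancellation. From $\y\ot x\ot 1$ one gets $\y\s_0(D(x)\ot 1)+D(\y)\ot x\ot 1$; applying $\s_1$ and using \eqref{L:SS:extend:G:new:c} (i.e.\ $\s_1(\y\s_0(a\ot b))=G(a)b$) the first piece becomes $G(D(x))$ and the second becomes $\s_1(D(\y)\ot x\ot 1)$. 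From $-1\ot x\ot\y$ and $-x\ot\y\ot 1$ one gets terms $-\s_0(D(x)\ot\y)-1\ot x\ot D(\y)$ and $-x\s_0(D(\y)\ot 1)-D(x)\ot\y\ot 1$; under $\s_1$ the $\ot\y\ot$ terms and the $1\ot x\ot D(\y)$ term vanish, and one must check that $\s_1(-\s_0(D(x)\ot\y))-\s_1(x\s_0(D(\y)\ot 1))$ cancels against the leftover $\s_1(\s_0(D(\y)\ot x))$ from the first term; this is exactly where $\s_1\circ\s_0=0$ and the right-$\A$-linearity/left-$\FF[x]$-linearity of $\s_1$ (Lemma~\ref{L:SS:lifting D_2:simp}\ref{L:SS:lifting D_2:simp:1}) come into play, possibly combined with $\dd_0\s_0 = 1-\s_{-1}\mu$ to recognize $\s_0(a\ot v)$-type expressions. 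Finally $D_1(-\s_0(h\ot 1)) = -D_1\s_0(h\ot 1)$ contributes $-\s_1\circ D_1\circ\s_0(h\ot 1)$ directly, which is the last term of \eqref{D:SS:lifting D_2:def:D_2} (and Lemma~\ref{L:SS:lifting D_2:simp}\ref{L:SS:lifting D_2:simp:4} rewrites it more explicitly if desired).

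The main obstacle I anticipate is the bookkeeping in the previous paragraph: several terms of the form $\s_1(\s_0(\cdot\ot v))$ with $v\in\{x,\y\}$ are produced that are individually nonzero but cancel in pairs, and getting the signs and the left/right placement of the $x$'s and $\y$'s exactly right requires care — in particular one cannot naively apply $\s_1\circ\s_0=0$ to $\s_0(a\ot x)$ unless one first rewrites it, since $\s_0$ as defined takes a single tensor-factor argument $\s_0(-\ot b)$ and $\s_0(a\ot x)$ means $\s_0$ evaluated on $a\ot x\in\A\ot\A$, which is legitimate, so in fact $\s_1\circ\s_0=0$ does kill all of them and the only genuinely surviving contributions are $G(D(x))$, $\s_1(D(\y)\ot x\ot 1)$, and $-\s_1\circ D_1\circ\s_0(h\ot 1)$. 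I would organize the computation as a single aligned display, grouping the $\s_1$-image of each of the five summands of $\dd_1(1\ot\rr\ot 1)$ on its own line, then collecting.
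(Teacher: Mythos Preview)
Your proposal is correct and matches the paper's proof essentially line for line: existence and uniqueness via Lemma~\ref{L:SS:lifting D_2:general}\ref{L:SS:lifting D_2:general:1}, the formula \eqref{D:SS:lifting D_2:def:D_2} by expanding $\s_1\circ D_1$ on the five summands of $\dd_1(1\ot\rr\ot 1)$ and simplifying with Lemma~\ref{L:SS:lifting D_2:simp} and \eqref{L:SS:extend:G:new:c}, and the commutativity $D_1\circ\dd_1=\dd_1\circ D_2$ from the homotopy relation together with $\dd_0\circ D_1\circ\dd_1=D_0\circ\dd_0\circ\dd_1=0$. The one place you overthink it is the ``cancellation'' discussion: each of $\s_1(\s_0(D(\y)\ot x))$, $\s_1(\s_0(D(x)\ot\y))$, and $\s_1(x\s_0(D(\y)\ot 1))$ vanishes individually by $\s_1\circ\s_0=0$ and left $\FF[x]$-linearity of $\s_1$, so no pairing is required---as you yourself conclude at the end of that paragraph.
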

\begin{proof}
By Lemma~\ref{L:SS:lifting D_2:general}\,\ref{L:SS:lifting D_2:general:1}, there exists a unique $D_0$-operator $D_2$ defined on $\A\ot\R\ot\A$ and such that $D_2(1\ot \rr\ot 1)=\s_1\circ D_1\circ \dd_1(1\ot\rr\ot 1)$. The exact expression for $D_2(a\ot\rr\ot b)$ can be computed as in the proof of Lemma~\ref{L:SS:lifting D_1}. 

Now, using Lemma~\ref{L:SS:lifting D_2:simp} and \eqref{L:SS:extend:G:new:c}, we have
\begin{align*}
D_2(1\ot \rr\ot 1)&= \s_1(D_1(1\ot\y\ot x)) + \s_1(D_1(\y\ot x\ot 1)) -\s_1(D_1(1\ot x\ot \y)) \\ &\qquad- \s_1(D_1(x\ot\y\ot 1)) -\s_1(D_1(\s_0(h\ot 1)))\\
&= \s_1(\s_0(D(\y)\ot x))+ \s_1(1\ot\y\ot D(x)) + \s_1(\y\s_0 (D(x)\ot 1))\\ &\qquad+\s_1(D(\y)\ot x\ot 1) -\s_1(\s_0(D(x)\ot \y)) -\s_1(1\ot x\ot D(\y))
\\ &\qquad - \s_1(x\s_0(D(\y)\ot 1))- \s_1(D(x)\ot\y\ot 1) -\s_1(D_1(\s_0(h\ot 1)))
\\&= \s_1(\y\s_0 (D(x)\ot 1))+\s_1(D(\y)\ot x\ot 1) -\s_1(D_1(\s_0(h\ot 1)))
\\&= G(D(x))+\s_1(D(\y)\ot x\ot 1) -\s_1(D_1(\s_0(h\ot 1))).
\end{align*}

Finally, by Lemma~\ref{L:SS:lifting D_2:general}\,\ref{L:SS:lifting D_2:general:2}, it is enough to show that $D_1\circ\dd_1(1\ot \rr\ot 1)=\dd_1 \circ D_2(1\ot \rr\ot 1)$, so we compute, using Lemma~\ref{L:rel2} and Lemma~\ref{L:SS:lifting D_1}:
\begin{align*}
\dd_1 \circ D_2(1\ot \rr\ot 1)&=\dd_1\circ\s_1\circ D_1\circ \dd_1(1\ot\rr\ot 1)\\
&=D_1\circ \dd_1(1\ot\rr\ot 1)-\s_{0}\circ\dd_0\circ D_1\circ \dd_1(1\ot\rr\ot 1)\\
&=D_1\circ \dd_1(1\ot\rr\ot 1)-\s_{0}\circ D_0\circ \dd_0\circ \dd_1(1\ot\rr\ot 1)\\
&=D_1\circ \dd_1(1\ot\rr\ot 1),
\end{align*}
as $\dd_0\circ \dd_1=0$.
\end{proof}

\subsection{Technical lemmas}\label{SS:gb:tl}

We need to prove yet some more technical results which will allow us to simplify the computation of the Gerstenhaber bracket given in \eqref{E:SS:MSAmethod:gb1}. Although these will be particularly useful in case $\chara(\FF)=0$, most statements hold over an arbitrary field, so we include them here. 

Following \cite[Lem.\ 2.13]{BLO15ja}, it will be useful to define, for $0\neq f\in\FF[x]$, the element $\pi_f$ such that:
\begin{enumerate}
    \item $\pi_f\in\FF[x]$ is monic,
    \item $\pi_{f}= \frac{f}{\gcd (f, f')}$, up to a nonzero scalar.
\end{enumerate}

In this subsection we will mostly work over some homomorphic image of $\A$ and we will extensively use the notations $a\equiv b\modd{I}$ and $a\equiv b\modd{c}$, defined in the introduction to mean that $a-b\in I$ and $a-b\in c\A=\A c$, for a two-sided ideal $I$ and a normal element $c$, respectively. We remark that the monoid of normal elements of $\A$ was described in \cite[Thm.\ 7.2]{BLO15tams} and, in particular, any product of factors of $h$ is normal in $\A$.

\begin{lemma}\label{L:SS:gb:tl:Dmod}
Let $D\in\der(\A)$, $a\in\A$ and $k\geq 0$. The following hold:
\begin{enumerate}[label=\textup{(\alph*)}]
\item $D(h)\in h\A$ and $D(x)\in\pi_h \A$;\label{L:SS:gb:tl:Dmod:1}
\item $D(a^k)\equiv ka^{k-1}D(a)\modd{h}$;\label{L:SS:gb:tl:Dmod:2}
\item $D(\gcd(h, h'))\in \gcd(h, h')\A$.\label{L:SS:gb:tl:Dmod:3}
\end{enumerate}
\end{lemma}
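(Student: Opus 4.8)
The plan is to prove the three claims in order, leveraging the derivation property of $D$ and the structural results from Proposition~\ref{P:general:facts:Ah}. For part \ref{L:SS:gb:tl:Dmod:1}, the containment $D(h)\in h\A$ should follow directly from the fact that $h\A=[\A,\A]$ is a characteristic ideal (by Proposition~\ref{P:general:facts:Ah}\,(f) in characteristic zero, or more generally since $D$ must preserve the commutator ideal $[\A,\A]\subseteq h\A$ and, one checks, $h\A$ itself): indeed $h = [\y, x]$, so $D(h) = [D(\y), x] + [\y, D(x)] \in [\A,\A] \subseteq h\A$. For $D(x)\in\pi_h\A$, I would use the explicit description of derivations of $\A$ from \cite{BLO15ja}; alternatively, writing $h = \gcd(h,h')\,\pi_h$ up to scalars, and using that $D(x)$ lies in a specified subspace determined by the classification of $\hoch^1$. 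I expect this part to reduce to citing the derivation description in \cite{BLO15ja} together with the observation that every inner derivation $\ad(a)$ satisfies $\ad(a)(x) = [a,x] \in [x,\A]\subseteq h\A \subseteq \pi_h\A$, and the outer derivations (spanned by the appropriate vector fields) also send $x$ into $\pi_h\A$ by inspection of their defining formulas.

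For part \ref{L:SS:gb:tl:Dmod:2}, I would argue by induction on $k$. The case $k=0,1$ is trivial. For the inductive step, $D(a^k) = D(a)a^{k-1} + aD(a^{k-1})$, and by the inductive hypothesis $D(a^{k-1})\equiv (k-1)a^{k-2}D(a)\modd{h}$, so $D(a^k)\equiv D(a)a^{k-1} + (k-1)a^{k-1}D(a)\modd{h}$. It then remains to observe that $D(a)a^{k-1}\equiv a^{k-1}D(a)\modd{h}$, which holds because $[D(a), a^{k-1}]\in[\A,\A]\subseteq h\A$ (using Proposition~\ref{P:general:facts:Ah}\,(f)). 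Adding gives $D(a^k)\equiv ka^{k-1}D(a)\modd h$, completing the induction.

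For part \ref{L:SS:gb:tl:Dmod:3}, write $h = \gcd(h,h')\,\pi_h$ (up to a nonzero scalar, with both factors monic and normal in $\A$). Applying $D$ and using \ref{L:SS:gb:tl:Dmod:1}, we get $h\A \ni D(h) = D(\gcd(h,h'))\pi_h + \gcd(h,h')D(\pi_h)$. Since $\gcd(h,h')D(\pi_h)\in\gcd(h,h')\A$, I want to conclude $D(\gcd(h,h'))\pi_h\in\gcd(h,h')\A$, hence (cancelling, or rather using that $\pi_h$ is a non-zero-divisor and $\gcd(h,h')\pi_h = h$) that $D(\gcd(h,h'))\in\gcd(h,h')\A$. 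The cleanest route: from $D(h)\in h\A = \gcd(h,h')\pi_h\A$ and $\gcd(h,h')D(\pi_h)\in\gcd(h,h')\A$, we obtain $D(\gcd(h,h'))\pi_h\in\gcd(h,h')\A$; since $\A$ is a free left $\FF[x]$-module (Proposition~\ref{P:general:facts:Ah}\,(b)) and $\gcd(h,h')$ divides $h = \gcd(h,h')\pi_h$, comparing coefficients in the $\FF[x]$-basis $\{h^iy^i\}$ reduces the question to the polynomial statement that $\pi_h$ times something in $\FF[x]$ lying in $\gcd(h,h')\pi_h\FF[x]$ forces that something into $\gcd(h,h')\FF[x]$ — which is immediate since $\FF[x]$ is a domain.

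The main obstacle I anticipate is part \ref{L:SS:gb:tl:Dmod:1}, specifically $D(x)\in\pi_h\A$: unlike the other containments, this is not a formal consequence of $h\A$ being a characteristic ideal and genuinely requires knowing which derivations $\A$ admits. In characteristic zero one must invoke the explicit basis of $\hoch^1(\A)$ from \cite[Thm.\ 5.29]{BLO15ja} (the Witt-algebra-type generators together with the central element), verify that each basis derivation sends $x$ into $\pi_h\A$, and note inner derivations are harmless since $[x,\A]\subseteq h\A\subseteq\pi_h\A$. Everything else is a routine commutator or divisibility manipulation inside $\A$, using only that $[\A,\A]\subseteq h\A$ and that $\A$ is $\FF[x]$-free.
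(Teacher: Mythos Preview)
Your argument for \ref{L:SS:gb:tl:Dmod:2} is correct and essentially the paper's (the paper phrases it as passing to the commutative quotient $\A/h\A$, which is the same induction packaged slickly). Your argument for $D(h)\in h\A$ is also exactly the paper's.

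For the claim $D(x)\in\pi_h\A$, you are mistaken that this ``genuinely requires knowing which derivations $\A$ admits.'' The paper gives a two-line intrinsic argument: once \ref{L:SS:gb:tl:Dmod:2} is established, apply it with $a=x$ to the polynomial $h$ to get $0\equiv D(h)\equiv h'D(x)\modd{h}$; expanding $D(x)$ in the $\FF[x]$-basis $\{h^iy^i\}$ reduces this to the polynomial fact that $h\mid h'f$ forces $\pi_h\mid f$ (since $\gcd(h/\gcd(h,h'),\,h'/\gcd(h,h'))=1$). No classification is needed, and this works in every characteristic, whereas your proposed case-check is tied to the characteristic-zero description in \cite{BLO15ja}.

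Your argument for \ref{L:SS:gb:tl:Dmod:3} has a genuine gap. From $D(h)\in h\A$ and $\gcd(h,h')D(\pi_h)\in\gcd(h,h')\A$ you correctly obtain $D(\gcd(h,h'))\,\pi_h\in\gcd(h,h')\A$, but you then state the coefficient-wise problem as ``$\pi_h f\in\gcd(h,h')\pi_h\FF[x]$'' --- that is not what you derived; you only have $\pi_h f\in\gcd(h,h')\FF[x]$. The implication $\pi_h f\in\gcd(h,h')\FF[x]\Rightarrow f\in\gcd(h,h')\FF[x]$ is false in general, since $\pi_h$ and $\gcd(h,h')$ share every prime $\pr_j$ with $\alpha_j\geq 2$ (e.g.\ $h=x^3$: take $f=x$, then $\pi_h f=x^2\in x^2\FF[x]$ but $f\notin x^2\FF[x]$). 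The paper instead writes $D(x)=\pi_h b$, uses \ref{L:SS:gb:tl:Dmod:2} to get $D(g)\in g'\pi_h\A+h\A$ for $g=\gcd(h,h')$, and then reads off $g\mid\pi_h g'$ from $h'=\pi_h'g+\pi_h g'$ together with $g\mid h'$. Your route can be salvaged by first observing $D(\pi_h)\in\pi_h\A$ (immediate from $D(x)\in\pi_h\A$ and \ref{L:SS:gb:tl:Dmod:2}, since $h\A\subseteq\pi_h\A$), so that $\gcd(h,h')D(\pi_h)\in h\A$ and the cancellation by $\pi_h$ becomes legitimate.
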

\begin{proof}
The defining relation for $\A$ implies that
\begin{equation*}
D(h)=-\gb{D(x), \y} -\gb{x, D(\y)}\in\gb{\A, \A}\subseteq h\A.
\end{equation*}
So $D(h\A)\subseteq h\A$ and $D$ induces a derivation $\barr{D}:\A/h\A\longrightarrow \A/h\A$ with $\barr{D}(a+h\A)=D(a)+h\A$. Since $\A/h\A$ is commutative, we have
\begin{equation*}
D(a^k)+h\A=\barr{D}\left((a+h\A)^k\right)=ka^{k-1}D(a)+h\A, 
\end{equation*}
which proves \ref{L:SS:gb:tl:Dmod:2}.

In particular, $0\equiv D(h)\equiv h'D(x)\modd{h}$, and it follows that $h'D(x)\in h\A$. Since for any $f\in \FF[x]$ we have $h$ divides $h'f$ if and only if $\pi_h$ divides $f$, we conclude that $D(x)\in\pi_h\A$, finishing the proof of \ref{L:SS:gb:tl:Dmod:1}.

Let $g=\gcd(h, h')$. Up to a nonzero scalar, $h=\pi_h g$. Write $D(x)=\pi_h b$ for some $b\in\A$. By \ref{L:SS:gb:tl:Dmod:2},
\begin{equation*}
D(g)\in g'\pi_h b + h\A\subseteq  g'\pi_h\A + h\A.
\end{equation*}
As $h'=\pi_h g' + \pi_h' g$ and $g$ divides $h'$, we deduce that $g$ divides $\pi_h g'$, so $D(g)\in g\A + h\A=g\A$.
\end{proof}

\begin{lemma}\label{L:SS:gb:tl:s_mod}
Let $\nu$ be a divisor of $h$, $D\in\der(\A)$, $\chi\in \Hom_{\A^e}(\A\ot\R\ot\A, \A)$ and $f\in\FF[x]$. The following hold:
\begin{enumerate}[label=\textup{(\alph*)}]
\item $\s_1(\nu\A\ot\V\ot\A+\A\ot\V\ot\nu\A)\subseteq \nu\A\ot\R\ot\A+\A\ot\R\ot\nu\A$. \label{L:SS:gb:tl:s_mod:1}
\item $\chi(\nu\A\ot\R\ot\A+\A\ot\R\ot\nu\A)\subseteq \nu\A$. \label{L:SS:gb:tl:s_mod:2}
\item $\chi\circ G(f)\equiv f'\chi(1\ot\rr\ot 1)\modd{h}$; in particular, $\chi\circ G(h\A)\subseteq \gcd(h, h')\A$. \label{L:SS:gb:tl:s_mod:3}
\item If $\chara(\FF)\neq 2$, then $\chi\circ\s_1\circ D_1\circ\s_0(f\ot 1)\in \pi_h f''\A+h\A$; in particular, $\chi\circ\s_1\circ D_1\circ\s_0(h\ot 1)\in \gcd(h, h')\A$. \label{L:SS:gb:tl:s_mod:4}
\item $\chi\circ \s_1(\y^\ell\ot x\ot 1)\equiv \ell\chi(1\ot\rr\ot 1)\y^{\ell-1}\modd{\gcd(h, h')}$, for all $\ell\geq 0$. \label{L:SS:gb:tl:s_mod:5}
\end{enumerate}
\end{lemma}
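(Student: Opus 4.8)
The plan is to prove the five items in the order (a), (b), (c), (d), (e), each relying on its predecessors. Items (a) and (b) are purely formal. Since $\nu$ divides $h$ it is normal in $\A$, so $\nu\A=\A\nu$ is a two-sided ideal. For (a), the left-hand inclusion follows from Lemma~\ref{L:SS:lifting D_2:simp}\,\ref{L:SS:lifting D_2:simp:1} (so $\s_1(\nu\,\alpha)=\nu\,\s_1(\alpha)$ since $\nu\in\FF[x]$), and the right-hand one from the fact that $\s_1$ is a right $\A$-module map together with $\A\ot\V\ot\nu\A=(\A\ot\V\ot\A)\,\nu$ (using normality), giving $\s_1(\A\ot\V\ot\nu\A)=\s_1(\A\ot\V\ot\A)\,\nu\subseteq\A\ot\R\ot\nu\A$. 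Item (b) is the same bookkeeping for the $\A^e$-module map $\chi$: $\chi(\nu\,\alpha)=\nu\,\chi(\alpha)\in\nu\A$, while $\chi(\alpha\,\nu)=\chi(\alpha)\,\nu\in\A\nu=\nu\A$.

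For (c), I would note that $\chi\circ G$ and the map $F_{\chi(1\ot\rr\ot1)}$ of \eqref{E:res:defFalpha} agree on $\FF[x]$: both are linear, vanish at $1$, and send $x^k$ to $\sum_{i=0}^{k-1}x^i\,\chi(1\ot\rr\ot1)\,x^{k-1-i}$. Then Lemma~\ref{L:res:propsFalpha}\,\ref{L:res:propsFalpha:4} gives $\chi\circ G(f)\in f'\,\chi(1\ot\rr\ot1)+[x,\A]\subseteq f'\,\chi(1\ot\rr\ot1)+h\A$, which is the asserted congruence. The ``in particular'' follows by writing a general element of $h\A$ as $\sum_\ell h g_\ell\y^\ell$ with $g_\ell\in\FF[x]$, using the extension \eqref{D:SS:extend:G} of $G$ and right $\A$-linearity of $\chi$, and observing that $(hg_\ell)'\,\chi(1\ot\rr\ot1)\in h'\A+h\A=\gcd(h,h')\A$, a two-sided ideal.

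The crux is (d). From Lemma~\ref{L:SS:lifting D_2:simp}\,\ref{L:SS:lifting D_2:simp:4} we have $\chi\circ\s_1\circ D_1\circ\s_0(x^k\ot1)=\sum_{i=1}^{k-1}\chi\circ\s_1(D(x^i)\ot x\ot x^{k-i-1})$. Substituting $D(x^i)\equiv i x^{i-1}D(x)\modd{h}$ (Lemma~\ref{L:SS:gb:tl:Dmod}\,\ref{L:SS:gb:tl:Dmod:2}), the error term lies in $\s_1(h\A\ot\V\ot\A)$ and is killed modulo $h\A$ by (a) and (b). Pulling out $x^{i-1}$ by left $\FF[x]$-linearity of $\s_1$, then $\pi_h$ since $D(x)\in\pi_h\A$ (Lemma~\ref{L:SS:gb:tl:Dmod}\,\ref{L:SS:gb:tl:Dmod:1}), and absorbing all commutators $[x^{i-1},-]\subseteq[x,\A]\subseteq h\A$, what remains modulo $h\A$ is $\bigl(\sum_{i=1}^{k-1}i\bigr)\pi_h\,x^{k-2}\,\gamma'=\binom{k}{2}\pi_h\,x^{k-2}\,\gamma'$, where $\gamma'\in\A$ depends only on $D$. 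As $\chara(\FF)\neq2$, $\binom{k}{2}x^{k-2}=\tfrac12(x^k)''$, so by linearity in $f$ we obtain $\chi\circ\s_1\circ D_1\circ\s_0(f\ot1)\equiv\tfrac12\,\pi_h\,f''\,\gamma'\modd{h}$, giving the membership in $\pi_h f''\A+h\A$. For $f=h$ one needs the elementary divisibility $\gcd(h,h')\mid\pi_h h''$: setting $g=\gcd(h,h')$ and writing $h'=g\tilde h$, differentiation gives $h''=g'\tilde h+g\tilde h'$, and comparing with $h'=\lambda(\pi_h'g+\pi_h g')$ shows $g\mid\pi_h g'$, whence $g\mid\pi_h h''$; therefore $\pi_h h''\A+h\A\subseteq\gcd(h,h')\A$. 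I expect this divisibility, together with the careful tracking of which error terms land in $h\A$ versus the finer ideal $\gcd(h,h')\A$ and the precise use of normality to commute polynomials past elements of $\A$, to be the main obstacle; note in particular that one cannot simply factor $h$ into distinct irreducibles here since $\chara(\FF)$ may be positive.

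For (e) I would induct on $\ell$ using the recursion defining $\s_1(\y^{\ell+1}\ot x\ot1)$; the base case $\ell=0$ is trivial. Applying $\chi$, the term $\y\,\s_1(\y^\ell\ot x\ot1)$ contributes $\ell\,\chi(1\ot\rr\ot1)\,\y^\ell$ modulo $\gcd(h,h')$ by the inductive hypothesis, using that $\gcd(h,h')\A$ is a two-sided ideal and $[\y,\chi(1\ot\rr\ot1)]\in h\A\subseteq\gcd(h,h')\A$; and of the sum $\sum_j\binom{\ell}{j}\chi\circ G(\delta^j(x))\,\y^{\ell-j}$ only the $j=0$ term $\chi(1\ot\rr\ot1)\,\y^\ell$ survives, because for $j\geq1$ one has $\delta^j(x)\in h\FF[x]$, so $(\delta^j(x))'\chi(1\ot\rr\ot1)\in h'\A+h\A=\gcd(h,h')\A$ and hence $\chi\circ G(\delta^j(x))\in\gcd(h,h')\A$ by (c). Adding the two contributions gives $(\ell+1)\,\chi(1\ot\rr\ot1)\,\y^\ell$, completing the induction.
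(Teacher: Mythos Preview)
Your proof is correct and follows essentially the same route as the paper's: the same use of left $\FF[x]$-linearity and right $\A$-linearity of $\s_1$ for (a)--(b), the same reduction modulo $h\A$ via commutativity of $\A/h\A$ in (d), and the same induction on $\ell$ using the recursion for $\s_1(\y^{\ell+1}\ot x\ot 1)$ together with $\delta^j(x)\in h\FF[x]$ for $j\geq 1$ in (e). Your observation in (c) that $\chi\circ G|_{\FF[x]}=F_{\chi(1\ot\rr\ot1)}$, allowing a direct appeal to Lemma~\ref{L:res:propsFalpha}\ref{L:res:propsFalpha:4}, is a tidy shortcut the paper does not make explicit (it redoes the computation for $x^k$), and your explicit verification that $\gcd(h,h')\mid \pi_h h''$ fills in a step the paper merely asserts here, having essentially established $g\mid\pi_h g'$ earlier in Lemma~\ref{L:SS:gb:tl:Dmod}.
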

\begin{proof}
The claim in \ref{L:SS:gb:tl:s_mod:1} is clear because $\nu$ is normal, $\s_1(\nu\A\ot\V\ot\A)=\nu\s_1(\A\ot\V\ot\A)\subseteq \nu\A\ot\R\ot\A$, by Lemma~\ref{L:SS:lifting D_2:simp}, and $\s_1$ is a right $\A$-module map. Claim \ref{L:SS:gb:tl:s_mod:2} is proved similarly.

Take $f=x^k$, with $k\geq 0$. Then
\begin{align*}
\chi\circ G(x^k)&= \sum_{i=0}^{k-1} x^i \chi(1\ot \rr\ot 1)x^{k-i-1}\\
&\equiv \sum_{i=0}^{k-1} x^{k-1} \chi(1\ot \rr\ot 1) \\
&\equiv  kx^{k-1} \chi(1\ot \rr\ot 1) \modd{h},
\end{align*}
establishing the first claim in \ref{L:SS:gb:tl:s_mod:3}.
Thus, for all $\ell\geq 0$,
\begin{align*}
\chi\circ G(hf\y^\ell)&= \chi(G(hf))\y^\ell  \in (h'f + hf')\chi(1\ot\rr\ot 1)\y^\ell + h\A\subseteq \gcd(h, h')\A,
\end{align*}
proving that $\chi\circ G(h\A)\subseteq \gcd(h, h')\A$. 

For \ref{L:SS:gb:tl:s_mod:4}, consider $f=x^k$, with $k\geq 0$. By Lemma~\ref{L:SS:gb:tl:Dmod}, there is $a\in\A$ such that $D(x)=\pi_h a$ and $D(x^i)-ix^{i-1}D(x)\in h\A$, for all $i\geq 0$. Set $\theta_i=D(x^i)-ix^{i-1}D(x)$. By Lemma~\ref{L:SS:lifting D_2:simp} we have:
\begin{align*}
\chi\circ\s_1\circ D_1\circ\s_0(x^k\ot 1)&= \sum_{i=1}^{k-1}\chi\circ\s_1(D(x^i)\ot x\ot x^{k-i-1})\\
&= \sum_{i=1}^{k-1}\chi\circ\s_1((ix^{i-1}D(x)+\theta_i)\ot x\ot x^{k-i-1}).
\end{align*}
By  \ref{L:SS:gb:tl:s_mod:1} and \ref{L:SS:gb:tl:s_mod:2}, $\sum_{i=1}^{k-1}\chi\circ\s_1(\theta_i\ot x\ot x^{k-i-1})\in h\A$. Thus, working modulo $h\A$ and using the commutativity of $\A/h\A$ and the hypothesis that $\chara(\FF)\neq 2$, we obtain
\begin{align*}
\chi\circ\s_1\circ D_1\circ\s_0(x^k\ot 1)
&\equiv  \sum_{i=1}^{k-1}\chi\circ\s_1(ix^{i-1}\pi_h a\ot x\ot x^{k-i-1})\\
&\equiv  \sum_{i=1}^{k-1}ix^{i-1}\pi_h \chi(\s_1(a\ot x\ot 1))x^{k-i-1}\\
&\equiv  {k\choose 2} x^{k-2}\pi_h \chi(\s_1(a\ot x\ot 1))\\
&\equiv  \left(x^{k}\right)''\pi_h \frac{1}{2} \chi(\s_1(a\ot x\ot 1)) \modd{h},
\end{align*}
so indeed $\chi\circ\s_1\circ D_1\circ\s_0(f\ot 1)\in f''\pi_h \A +h\A$. In particular,
\begin{equation*}
\chi\circ\s_1\circ D_1\circ\s_0(h\ot 1)\in h''\pi_h \A +h\A\subseteq \gcd(h, h')\A, 
\end{equation*}
because $\gcd(h, h')$ divides $h''\pi_h$.

Lastly, we prove \ref{L:SS:gb:tl:s_mod:5} by induction on $\ell\geq 0$. As 
$\chi\circ \s_1(1\ot x\ot 1)=0$, the base step is established and we assume that
\begin{equation*}
\chi\circ \s_1(\y^\ell\ot x\ot 1)\equiv \ell\chi(1\ot\rr\ot 1)\y^{\ell-1}\modd{\gcd(h, h')} 
\end{equation*}
holds for some $\ell\geq 0$. Then, by the definition of $\s_1$, the commutativity of $\A/\gcd(h, h')\A$ and part \ref{L:SS:gb:tl:s_mod:3} above, as $\delta^j(x)\in h\A$ for all positive $j$,
\begin{align*}
\chi\circ \s_1(\y^{\ell+1}\ot x\ot 1)&=
\y \chi(\s_1(\y^{\ell}\ot x\ot 1))+\sum_{j=0}^\ell {\ell\choose j} \chi\circ G\left(\delta^j(x)\right) \y^{\ell-j}\\
&\equiv
\ell\chi(1\ot\rr\ot 1)\y^{\ell} + \chi\circ G(x) \y^{\ell} \\
&\equiv
\ell\chi(1\ot\rr\ot 1)\y^{\ell} + \chi(1\ot\rr\ot 1) \y^{\ell} \modd{\gcd(h, h')}.
\end{align*}
\end{proof}

\begin{lemma}\label{L:SS:gb:tl:ynyhatn}
Let $\chi\in \Hom_{\A^e}(\A\ot\R\ot\A, \A)$, $f\in\FF[x]$ and $k\geq 0$. Then:
\begin{enumerate}[label=\textup{(\alph*)}]
\item $\pi_h h^{k-1}\gb{y^{k+1}, h}\equiv (k+1)\pi_h h'h^{k-1}y^k +{k+1\choose 2}\pi_h h'' h^{k-1} y^{k-1} \modd{h}$. (Notice that in case $k=0$ the above expression still makes sense, as $\frac{\pi_h h'}{h}=\frac{h'}{\gcd(h, h')}\in\FF[x]$.) \label{L:SS:gb:tl:ynyhatn:1}
\item $\y^k\equiv h^k y^k \modd{\gcd(h, h')}$. \label{L:SS:gb:tl:ynyhatn:2}
\item $\chi\circ G(fh^k y^k)\equiv f'\chi(1\ot\rr\ot 1)\y^k-{k+1\choose 2}fh''\chi(1\ot\rr\ot 1)\y^{k-1} \modd{\gcd(h, h')}$. \label{L:SS:gb:tl:ynyhatn:3}
\end{enumerate} 
\end{lemma}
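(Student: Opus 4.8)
\medskip
\noindent\textbf{Proof plan.}
All three parts will be reduced to computations in the free left $\FF[x]$-module $\A$; throughout put $g=\gcd(h,h')$ and recall that, being a divisor of $h$, $g$ (and hence $g^2$) is a product of factors of $h$ and so is normal, so that $g\A=\A g$ and $g^2\A=\A g^2$ are two-sided ideals of $\A$. For \ref{L:SS:gb:tl:ynyhatn:1} I would start from the Weyl-algebra identity $\gb{y^{n}, f}=\sum_{j=1}^{n}\binom{n}{j}f^{(j)}y^{n-j}$ (as already used in the proof of Lemma~\ref{L:comm:bx}), apply it with $n=k+1$, $f=h$, and left-multiply by $\pi_h h^{k-1}$. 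The $j=1$ and $j=2$ terms are exactly $(k+1)\pi_h h'h^{k-1}y^k+\binom{k+1}{2}\pi_h h''h^{k-1}y^{k-1}$, and each of these lies in $\A$ (this also covers the case $k=0$): $\pi_h h''h^{k-1}y^{k-1}=(\pi_h h'')\,h^{k-1}y^{k-1}$ is a polynomial multiple of a basis element, while $\pi_h h'$ is divisible by $h$, with $\pi_h h'/h=h'/g\in\FF[x]$, so $\pi_h h'h^{k-1}y^k=(h'/g)\,h^ky^k\in\A$. For $j\ge 3$ I would rewrite $\pi_h h^{k-1}h^{(j)}y^{k+1-j}=\bigl(\pi_h h^{(j)}h^{\,j-2}\bigr)\,h^{k+1-j}y^{k+1-j}$, which belongs to $h\A$ because $j-2\ge 1$; hence all $j\ge 3$ terms vanish modulo $h$, proving (a).

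For \ref{L:SS:gb:tl:ynyhatn:2} I would induct on $k$, the case $k=0$ being trivial. For the step, write $\y^{k+1}=\y\,h^ky^k+\y\bigl(\y^{k}-h^ky^k\bigr)$: the last summand lies in $g\A$ by the inductive hypothesis together with the ideal property, while $\y\,h^ky^k=y\,h^{k+1}y^k=h^{k+1}y^{k+1}+(k+1)h^kh'y^k$ and $(k+1)h^kh'y^k=(k+1)(h'/g)\,g\,h^ky^k\in g\A$.

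The crux is \ref{L:SS:gb:tl:ynyhatn:3}. Here I would first refine (b) to the congruence
\begin{equation*}
\y^{k}\ \equiv\ h^{k}y^{k}+\binom{k+1}{2}\,h'h^{k-1}y^{k-1}\modd{g^2\A}\qquad(k\ge 1),
\end{equation*}
proved again by induction: the step produces the coefficient $\binom{k+1}{2}+(k+1)=\binom{k+2}{2}$ and uses the divisibilities $g^2\mid (h')^2$ (immediate) and $g^2\mid h\,h''$. The latter I would get by writing $h=\pi_h g$ (up to a scalar) and $h'=gu$ with $u\in\FF[x]$, noting that $h'=\pi_h'g+\pi_h g'$ forces $g\mid \pi_h g'$, say $\pi_h g'=gw$, and then $h\,h''=h(gu)'=hg'u+hgu'=g^2(wu+\pi_h u')$. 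Combining this refined congruence with (b) for the exponent $k-1$ (and multiplying by $h'\in g\FF[x]$) gives $fh^ky^k\equiv f\y^{k}-\binom{k+1}{2}fh'\,\y^{k-1}\modd{g^2\A}$; writing $fh^ky^k=\sum_{j=0}^{k}c_j\y^{j}$ in the free $\FF[x]$-basis $\{\y^{j}\}$ of $\A$ (implicit in \eqref{D:SS:extend:G}), this means $c_k=f$, $c_{k-1}\equiv-\binom{k+1}{2}fh'\modd{g^2}$, and $c_j\in g^2\FF[x]$ for $j\le k-2$. Now, since $G(u\y^{j})=G(u)\y^{j}$ for $u\in\FF[x]$ and $\chi\circ G(u)\equiv u'\chi(1\ot\rr\ot 1)\modd{h}$ by Lemma~\ref{L:SS:gb:tl:s_mod}\,\ref{L:SS:gb:tl:s_mod:3}, applying $\chi\circ G$ term by term yields $\chi\circ G(fh^ky^k)\equiv\sum_{j}c_j'\,\chi(1\ot\rr\ot 1)\,\y^{j}\modd{h\A}$. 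The terms with $j\le k-2$ die modulo $g$ because $c_j\in g^2\FF[x]$ forces $c_j'\in g\FF[x]$; for $j=k-1$ one has $c_{k-1}'\equiv-\binom{k+1}{2}(f'h'+fh'')\equiv-\binom{k+1}{2}fh''\modd{g}$ since $g\mid h'$; and for $j=k$ one gets the coefficient $f'$. This is precisely the congruence in (c).

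The only real obstacle is part (c). The naive idea of replacing $h^ky^k$ by $\y^{k}$ modulo $g=\gcd(h,h')$ before applying $\chi\circ G$ fails, since Lemma~\ref{L:SS:gb:tl:s_mod}\,\ref{L:SS:gb:tl:s_mod:3} only guarantees $\chi\circ G(h\A)\subseteq g\A$, not $\chi\circ G(g\A)\subseteq g\A$. One is therefore forced to expand $\y^{k}$ in the $\y$-basis one order further, namely modulo $g^2\A$, and to keep track of the fact that the $\y^{k-1}$-coefficient is divisible only by $g$ whereas the lower coefficients are divisible by $g^2$. Establishing $g^2\mid h\,h''$ and carrying out this refined induction are where the work lies; once they are in hand the rest is routine application of the earlier lemmas.
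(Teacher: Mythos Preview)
Your arguments for \ref{L:SS:gb:tl:ynyhatn:1} and \ref{L:SS:gb:tl:ynyhatn:2} are essentially the paper's: both expand $[y^{k+1},h]$ via \eqref{eq:Ahcom} and then induct on $k$ for (b), the only cosmetic difference being that the paper multiplies on the right by $\y$ (writing $\y^{k+1}\equiv h^ky^{k+1}h=h^{k+1}y^{k+1}+h^k[y^{k+1},h]$) whereas you multiply on the left.

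For \ref{L:SS:gb:tl:ynyhatn:3} your route is correct but genuinely different from the paper's. The paper argues by a direct induction on $k$ for the full statement of (c): from (a) one extracts $h^{k+1}y^{k+1}\equiv h^ky^k\,\y-(k{+}1)h'h^ky^k\modd{h}$, applies $\chi\circ G$ (using that $\chi\circ G(h\A)\subseteq g\A$ and that $G(a\y)=G(a)\y$), and then invokes the induction hypothesis twice, once for $fh^ky^k$ and once for $fh'h^ky^k$. No mod-$g^2$ statement and no divisibility $g^2\mid hh''$ are ever needed. Your approach instead first refines (b) to a congruence modulo $g^2\A$, which forces you to establish $g^2\mid hh''$ and to track the $\y$-expansion of $h^ky^k$ one order further; once that is in hand, a single termwise application of Lemma~\ref{L:SS:gb:tl:s_mod}\,\ref{L:SS:gb:tl:s_mod:3} finishes the job. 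The paper's argument is shorter and more self-contained (it recycles (a) rather than proving a new auxiliary fact), while your argument is more structural: it isolates precisely the second-order information in the change of basis $\{h^jy^j\}\leftrightarrow\{\y^j\}$ that makes (c) work, and the identity $g^2\mid hh''$ you prove along the way is of independent interest.
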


\begin{proof}
Working modulo $h\A$, we deduce \ref{L:SS:gb:tl:ynyhatn:1}:
\begin{align*}
\pi_h h^{k-1}\gb{y^{k+1}, h} 
&=\sum_{j=1}^{k+1}{k+1\choose j}\pi_h h^{(j)}h^{k-1}y^{k+1-j}\\
&\equiv (k+1)\pi_h h'h^{k-1}y^{k}
+{k+1\choose 2}\pi_h h''h^{k-1}y^{k-1} \modd{h}.
\end{align*}
In particular, multiplying both sides of \ref{L:SS:gb:tl:ynyhatn:1} by $\gcd(h, h')=h/\pi_h$ we obtain
\begin{align}\label{E:SS:gb:tl:ynyhatn:b}
\begin{split}
h^{k}\gb{y^{k+1}, h} &\equiv (k+1)h'h^{k}y^k +{k+1\choose 2} h'' h^{k} y^{k-1} \\&\equiv (k+1)h'h^{k}y^k \modd{h},
\end{split}
\end{align}
and it follows that $h^{k}\gb{y^{k+1}, h}\in \gcd(h, h')\A$.

We are now ready to prove \ref{L:SS:gb:tl:ynyhatn:2} by induction on $k\geq 0$, the base case being trivial. Supposing that \ref{L:SS:gb:tl:ynyhatn:2} holds for a certain $k\geq 0$, we get
\begin{align*}
\y^{k+1}&\equiv h^k y^{k+1}h=h^{k+1} y^{k+1} + h^k \gb{y^{k+1},h}\equiv 
h^{k+1} y^{k+1} \modd{\gcd(h, h')}.
\end{align*}

We also prove \ref{L:SS:gb:tl:ynyhatn:3} by induction on $k\geq 0$. The case $k=0$ is immediate from Lemma~\ref{L:SS:gb:tl:s_mod}\ref{L:SS:gb:tl:s_mod:3}. For the inductive step, assume the congruence holds for $k\geq 0$. By \eqref{E:SS:gb:tl:ynyhatn:b} we have
\begin{align*}
h^{k+1} y^{k+1}&=h^ky^{k+1}h - h^k \gb{y^{k+1}, h}\equiv
h^ky^{k}\y -(k+1)h'h^{k}y^k \modd{h}.
\end{align*}
By Lemma~\ref{L:SS:gb:tl:s_mod}\,\ref{L:SS:gb:tl:s_mod:3},
\begin{align*}
\chi\circ G(fh^{k+1} y^{k+1})&\equiv \chi\circ G(fh^ky^{k})\y -(k+1)\chi\circ G(fh'h^{k}y^k)\\
&\equiv f'\chi(1\ot\rr\ot 1)\y^{k+1}-{k+1\choose 2}fh''\chi(1\ot\rr\ot 1)\y^{k}\\
&\quad -(k+1)(fh')'\chi(1\ot\rr\ot 1)\y^k\\ &\quad +(k+1){k+1\choose 2}fh'h''\chi(1\ot\rr\ot 1)\y^{k-1} \\
&\equiv f'\chi(1\ot\rr\ot 1)\y^{k+1}-{k+1\choose 2}fh''\chi(1\ot\rr\ot 1)\y^{k}\\
&\quad -(k+1)fh''\chi(1\ot\rr\ot 1)\y^k\\
&\equiv f'\chi(1\ot\rr\ot 1)\y^{k+1}-{k+2\choose 2}fh''\chi(1\ot\rr\ot 1)\y^{k}\\ &\omit $\hfill \modd{\gcd(h, h')}.$
\end{align*}
\end{proof}

\section{The Gerstenhaber bracket}\label{S:Gerst:char0}

In this section we determine the structure of $\hoch^2(\A)$ as a module over the Lie algebra $\hoch^1(\A)$ under the Gerstenhaber bracket, always under the assumption that $\chara(\FF)=0$. We will prove some of the main results of this article. In the first subsection we will describe two different subspaces of the space of linear derivations of our algebra, that will act on $\hoch^2(\A)$ in a very different way. Next we will describe the action of the classes of these derivations on $\hoch^2(\A)$. Then we achieve our goal of giving an explicit description of $\hoch^2(\A)$ as $\hoch^1(\A)$-Lie module. We finish the section by relating this action of $\hoch^1(\A)$ on $\hoch^2(\A)$ with the representation theory of the Virasoro algebra, and then by discussing several special cases.

\subsection{The Lie algebra structure of $\hoch^1(\A)$}\label{SS:Gerst:char0:LieHH1}
The Lie algebra structure of $\hoch^1(\A)$ in case $\chara(\FF)=0$ is described explicitly in \cite[Sec.\ 5]{BLO15ja} and we briefly collect the results we need below. 

There are two types of derivations of $\A$, which together describe $\der(\A)$ and $\hoch^1(\A)$:
\begin{itemize}
\item For any $g\in\FF[x]$, let $D_g$ be the derivation of $\A$ such that $D_g(x)=0$ and $D_g(\y)=g$. Then, $\{ D_g \mid g\in\FF[x] \}$ is an abelian Lie subalgebra of $\der(\A)$ and $D_g\in\inder(\A)$ if and only if $g\in h\FF[x]$.
\item Viewing, as usual, $\A=\A_h\subseteq \A_1$ with $\y=yh$, define the elements $a_n=\pi_h h^{n-1}y^n\in \{ u\in\A_1 \mid \gb{u, \A}\subseteq \A \}$ (the normalizer of $\A$ in $\A_1$), for all $n\geq 1$. It will also be convenient to consider the element $a_0=\pi_h/h=\frac{1}{\gcd(h, h')}$ in the localization of $\A_1$ at the Ore set formed by the powers of $h$. Then, $\ad_{ga_n}\in\der(\A)$ for all for all $n\geq 0$ and $g\in\FF[x]$. Moreover, $\ad_{ga_n}\in\inder(\A)$ if and only if $g\in\gcd(h, h')\FF[x]$. 
\end{itemize}

Next, we recall the definition in \cite[Sec.\ 4.3]{BLO15ja} of the linear endomorphism $\delta_0:\FF[x]\longrightarrow\FF[x]$ given by 
\begin{equation}\label{D:Gerst:char0:defdelta0}
\delta_0(g)=\delta(ga_0)=(g\pi_h h^{-1})'h=(g\pi_h)' - g\frac{\pi_h h'}{h}. 
\end{equation} 
By \cite[Lem.\ 4.14]{BLO15ja}, $\ad_{ga_0}=-D_{\delta_0(g)}$.

For notational simplicity, by \cite[Thm.\ 8.2]{BLO15tams}, we can assume that $h$ is monic, say $h=\pr_1^{\alpha_1}\cdots \pr_t^{\alpha_t}$, where $\pr_1, \ldots, \pr_t$ are the distinct monic prime factors of $h$, with multiplicities $\alpha_1, \ldots, \alpha_t$. Up to changing the order of the factors, we can further assume that there is $0\leq k\leq t$ such that $\alpha_1, \ldots, \alpha_k\geq 2$ and $\alpha_{k+1}= \cdots = \alpha_t=1$.
Moreover, if $k=0$ then $\gcd(h, h')=1$ and in this case $\hoch^2(\A)=0$, so there is nothing to prove.

We have the following result (see also {\cite[Thm.\ 5.1, Prop.\ 5.9]{BLO15ja}}).

\begin{thm} \label{T:Gerst:char0:recallBLO3}  
Assume  $\chara(\FF) = 0$.   Then there is a decomposition
$\hoch^1(\A) = \mathsf{Z}(\hoch^1(\A))  \oplus [\hoch^1(\A),\hoch^1(\A)]$. 
Moreover, using the above notations,
there are isomorphisms of Lie algebras:
\begin{enumerate}[label=\textup{(\alph*)}]
\item $\mathcal N=\spann_\FF\{\ad_{g a_n} \mid  g \in \pr_1 \cdots \pr_k\FF[x], \ n \geq 0\}$ is the unique maximal  nilpotent ideal  of 
$ [\hoch^1(\A), \hoch^1(\A)]$.\label{T:Gerst:char0:recallBLO3:1}
\item $\mathsf{Z}(\hoch^1(\A)) \cong \left\{ D_{g} \mid g\in\gcd(h, h')\FF[x], \   \degg g <\degg h \right\}$.\label{T:Gerst:char0:recallBLO3:2}
\item $[\hoch^1 (\A), \hoch^1 (\A)]  =  \spann_\FF\{\ad_{ga_n}  \mid  g \in\FF[x],  \degg g <\degg \gcd(h, h'),  n\geq 0\}$.\label{T:Gerst:char0:recallBLO3:3}
\item $[\hoch^1 (\A), \hoch^1 (\A)]/\mathcal N \cong \W_1\oplus\cdots\oplus\W_k$, where $\W_i=\left(\FF[x]/\pr_{i}\FF[x]\right) \otimes \W$ is a field extension of the Witt algebra.\label{T:Gerst:char0:recallBLO3:4}
\end{enumerate} 
\end{thm}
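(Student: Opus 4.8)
This theorem is a re-packaging, in the notation fixed just above, of \cite[Thm.\ 5.1]{BLO15ja} (for the decomposition $\hoch^1(\A)=\mathsf{Z}(\hoch^1(\A))\oplus[\hoch^1(\A),\hoch^1(\A)]$ and for parts \ref{T:Gerst:char0:recallBLO3:1}, \ref{T:Gerst:char0:recallBLO3:3}, \ref{T:Gerst:char0:recallBLO3:4}) together with \cite[Prop.\ 5.9]{BLO15ja} (for part \ref{T:Gerst:char0:recallBLO3:2}), so the plan is to recall the relevant bracket computations of \cite[Sec.\ 4--5]{BLO15ja} and read off each item. The point of departure is the description of $\der(\A)$ and $\hoch^1(\A)$ recorded above: modulo $\inder(\A)$, the classes of the $D_g$ with $\degg g<\degg h$, together with the classes of the $\ad_{ga_n}$ with $n\geq 1$ and $\degg g<\degg\gcd(h,h')$, form an $\FF$-basis of $\hoch^1(\A)$, and the identity $\ad_{ga_0}=-D_{\delta_0(g)}$ places the ``$n=0$'' slice inside the $D$-part.

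First I would record three families of bracket relations. (i) $[D_g,D_{g'}]=0$, since both derivations annihilate $\FF[x]$. (ii) Extending each $D_g$ to the Ore localization of $\A_1$ at the powers of $h$ via $D_g(y)=g/h$, one has $[D_g,\ad_u]=\ad_{D_g(u)}$ for $u$ in the normalizer of $\A$, and a direct computation shows $D_g(g'a_n)$ is an $\FF[x]$-combination of the $a_\ell$ with $\ell<n$; in particular $[D_g,\ad_{g'a_1}]$ equals, modulo inner derivations, $-D_{\delta_0(s)}$ with $s$ the remainder of $g'g$ on division by $\gcd(h,h')$, so it is nonzero in $\hoch^1(\A)$ whenever $\gcd(h,h')\nmid g'g$. (iii) $[\ad_{ga_m},\ad_{g'a_n}]=\ad_{[ga_m,g'a_n]}$, and expanding the commutator in $\A_1$ via $[a_n,f]=\sum_{k\geq 1}\binom{n}{k}h^kf^{(k)}a_{n-k}$, together with the fact that $[a_m,a_n]-(n-m)a_{m+n-1}$ is a combination of the $a_\ell$ with all coefficients in $(\pr_1\cdots\pr_k)\FF[x]$, gives $[\ad_{ga_m},\ad_{g'a_n}]\equiv(n-m)\ad_{gg'a_{m+n-1}}$ modulo $\mathcal N$. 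From (i)--(ii) the subspace $\{D_g\mid g\in\gcd(h,h')\FF[x],\ \degg g<\degg h\}$ is central, and the non-vanishing in (ii), combined with the leading-term structure of (iii), shows nothing outside it is central, proving \ref{T:Gerst:char0:recallBLO3:2}. Likewise (iii) shows the span of the $\ad_{ga_n}$ with $n\geq 0$ and $\degg g<\degg\gcd(h,h')$ is a subalgebra contained in $[\hoch^1(\A),\hoch^1(\A)]$; using the properties of $\delta_0$ on low-degree polynomials established in \cite[Sec.\ 4]{BLO15ja} (injectivity, and image meeting $\gcd(h,h')\FF[x]$ trivially), this subalgebra complements the central part, which simultaneously yields \ref{T:Gerst:char0:recallBLO3:3} and the direct sum decomposition.

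For \ref{T:Gerst:char0:recallBLO3:4} I would define the linear map sending the class of $ga_n$ to $\barr{g}\otimes w_{n-1}$, where $\barr{g}$ is the image of $g$ under the Chinese Remainder isomorphism $\FF[x]/(\pr_1\cdots\pr_k)\FF[x]\cong\prod_{i=1}^{k}\FF[x]/\pr_i\FF[x]$, and verify via the reduced relation in (iii) that it carries the induced bracket on $[\hoch^1(\A),\hoch^1(\A)]/\mathcal N$ to that of $\bigoplus_{i=1}^{k}\W_i$ with $\W_i=\left(\FF[x]/\pr_i\FF[x]\right)\otimes\W$. Since $\W$ is simple over $\FF$ in characteristic $0$ (\textit{cf.}\ \cite[Lem.\ 5.19]{BLO15ja}) and simplicity persists after the field extensions $\FF[x]/\pr_i\FF[x]$, each $\W_i$ is simple, so $[\hoch^1(\A),\hoch^1(\A)]/\mathcal N$ is semisimple. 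Finally, for \ref{T:Gerst:char0:recallBLO3:1}: $\mathcal N$ is an ideal of $[\hoch^1(\A),\hoch^1(\A)]$ by (ii)--(iii), and it is nilpotent because it carries a finite filtration by ideals on which the bracket strictly raises the order of divisibility of the polynomial coefficients by $\pr_1\cdots\pr_k$ — a quantity bounded above in view of the constraint $\degg g<\degg\gcd(h,h')$. Any nilpotent ideal of $[\hoch^1(\A),\hoch^1(\A)]$ has zero image in the semisimple quotient $[\hoch^1(\A),\hoch^1(\A)]/\mathcal N$, hence lies in $\mathcal N$; so $\mathcal N$ is the unique maximal nilpotent ideal.

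The step I expect to be the main obstacle is the explicit commutator computation underlying (iii): making $[ga_m,g'a_n]$ in the localized Weyl algebra fully explicit and verifying that every correction to $[a_m,a_n]=(n-m)a_{m+n-1}$ has all its $a_\ell$-coefficients divisible by $\pr_1\cdots\pr_k$, so that passing to the quotient by $\mathcal N$ cleanly produces the current algebra $\bigoplus_{i}\left(\FF[x]/\pr_i\FF[x]\right)\otimes\W$ with the genuine Witt bracket. This bookkeeping, together with the companion analysis of $\delta_0$, is the technical heart of \cite[Sec.\ 4--5]{BLO15ja} and would simply be quoted here.
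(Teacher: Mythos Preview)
Your proposal is correct and takes essentially the same approach as the paper: the theorem is stated without proof, with an explicit attribution to \cite[Thm.\ 5.1, Prop.\ 5.9]{BLO15ja}, and you have correctly identified precisely those two references as the source for the decomposition and for parts \ref{T:Gerst:char0:recallBLO3:1}--\ref{T:Gerst:char0:recallBLO3:4}. Your sketch of the underlying bracket computations from \cite[Sec.\ 4--5]{BLO15ja} goes beyond what the paper records, but is in the right spirit and would constitute an accurate expansion of the citation.
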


\subsection{Formulas for the Gerstenhaber bracket $\gb{\hoch^1(\A), \hoch^2(\A)}$}\label{SS:Gerst:char0:formula}

Recall that by Corollary~\ref{C:res:HH2inchar0}, $\hoch^2(\A) \cong \A/\gcd(h, h')\A$ can be identified with the polynomial ring $\D[\y]$, where $\D=\left(\FF[x]/\gcd(h, h')\FF[x]\right)$. We will use \eqref{E:SS:MSAmethod:gb1} and also the identification introduced there 
between $\A/\gcd(h, h')\A$ and $\Hom_{\A^e}(\A\ot\R\ot\A, \A)/\im\, \dd^*_1$, which associates the element $a\in\A$ with the map $\chi_a\in \Hom_{\A^e}(\A\ot\R\ot\A, \A)$ defined by $\chi_a(1\ot\rr\ot 1)=a$, and similarly for the corresponding homomorphic images.

Now, Lemma~\ref{L:SS:gb:tl:s_mod}\ref{L:SS:gb:tl:s_mod:4} implies that for all $a\in\A$, the image of $\chi_a\circ\s_1\circ D_1\circ\s_0(h\ot 1)$ in $\hoch^2(\A)$ is zero. Thus we have, using Lemma~\ref{L:SS:lifting D_2},
\begin{equation}\label{E:SS:MSAmethod:gb2}
\gb{D, a}=D(a)-\chi_a(G(D(x))) -\chi_a(\s_1(D(\y)\ot x\ot 1)), 
\end{equation}
for all $a\in \A$ and $D\in\der(\A)$. Moreover, by Lemma~\ref{L:SS:gb:tl:Dmod}\ref{L:SS:gb:tl:Dmod:3}, the image of $D(a)$ in $\hoch^2(\A)$ depends only on the class $a+\gcd(h, h')\A$ and similarly, $\chi_a(G(D(x)))$ and $\chi_a(\s_1(D(\y)\ot x\ot 1))$ depend only on the classes $D(x)+h\A$ and $D(\y)+\gcd(h, h')\A$, respectively, by Lemma~\ref{L:SS:gb:tl:s_mod}.

We will first consider the derivations of the form $D_g$, for $g\in\FF[x]$. Fix $g$ and let $D=D_g$. Take $a=p(x)\y^k$ for some $p(x)\in\FF[x]$ and $k\geq 0$. Then $D(x)=0=\s_1(D(\y)\ot x\ot 1)$ and by Lemma~\ref{L:SS:gb:tl:Dmod}, $D(p(x)\y^k)=p(x)D(\y^k)\equiv kp(x)\y^{k-1}g\equiv kgp(x)\y^{k-1}\modd{h}$. Thus, $\gb{D_g, p(x)\y^k}\equiv kgp(x)\y^{k-1} \modd{\gcd(h, h')}$. So,
\begin{equation}\label{E:SS:Gerst:char0:gbDg} 
\gb{D_g, -}=g\frac{d}{d\y} \qquad \mbox{on $\D[\y]$.}
\end{equation}
In particular, $\gb{\mathsf{Z}(\hoch^1(\A)), \hoch^2(\A)}=0$, by Theorem~\ref{T:Gerst:char0:recallBLO3}\ref{T:Gerst:char0:recallBLO3:2}.

Now we can turn our attention to the derivations of the  form $\ad_{ga_n}$, with $g\in\FF[x]$ and $n\geq 0$.

\begin{lemma}\label{L:SS:Gerst:char0:Damodgcd} 
Let $D=\ad_{ga_n}$ and $a=p(x)\y^k\in\A$, as above. Then:
\begin{enumerate}[label=\textup{(\alph*)}]
\item $D(x)=n\pi_h g h^{n-1}y^{n-1}\equiv n\pi_h g \y^{n-1} \modd{\gcd(h, h')}$.\label{L:SS:Gerst:char0:Damodgcd:1}
\item $D(\y)\equiv -\delta_0(g)\y^n \modd{\gcd(h, h')}$.
\item $D(a)\equiv\left( n\pi_h gp'(x)-kp(x)\delta_0(g) \right)\y^{n+k-1} \modd{\gcd(h, h')}$.
\end{enumerate}
\end{lemma}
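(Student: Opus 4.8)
The plan is to compute each of the three quantities $D(x)$, $D(\y)$ and $D(a)$ modulo $\gcd(h,h')$ directly, working inside the Weyl algebra $\A_1\supseteq\A$ where $\y=yh$ and $a_n=\pi_h h^{n-1}y^n$, and then to use the congruences already established in Section~\ref{SS:gb:tl}. For part \ref{L:SS:Gerst:char0:Damodgcd:1}, I would first compute $D(x)=\ad_{ga_n}(x)=[ga_n,x]=g[\pi_h h^{n-1}y^n,x]$. Since $[y^n,x]=ny^{n-1}$ in $\A_1$ and $\pi_h h^{n-1}\in\FF[x]$ is central modulo the commutator, this gives $D(x)=n\pi_h g h^{n-1}y^{n-1}$; rewriting $h^{n-1}y^{n-1}$ and invoking Lemma~\ref{L:SS:gb:tl:ynyhatn}\ref{L:SS:gb:tl:ynyhatn:2}, which says $\y^{k}\equiv h^k y^k\modd{\gcd(h,h')}$, I get $h^{n-1}y^{n-1}\equiv\y^{n-1}\modd{\gcd(h,h')}$, hence $D(x)\equiv n\pi_h g\,\y^{n-1}\modd{\gcd(h,h')}$. (The edge case $n=0$ is harmless since then the coefficient vanishes and $a_0=\pi_h/h$ lies outside $\A$, but $D=\ad_{ga_0}=-D_{\delta_0(g)}$ by the recollection preceding this subsection, and $D_{\delta_0(g)}(x)=0$, consistent with the formula.)

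For part (b), the cleanest route is to recall the identity $\ad_{ga_0}=-D_{\delta_0(g)}$ from \eqref{D:Gerst:char0:defdelta0} and the fact — established in \cite[Sec.\ 4--5]{BLO15ja} and recalled at the start of this subsection — that $a_n=h^{n-1}\pi_h y^n$ relates to $a_0$ in a way making $\ad_{ga_n}(\y)$ computable; concretely, $D(\y)=[ga_n,\y]$ can be expanded using $\y=yh$ and the commutation rules in $\A_1$. I expect this to reduce, modulo $h\A$ (hence a fortiori modulo $\gcd(h,h')\A$, since $\gcd(h,h')\mid h$), to $-\delta_0(g)\y^n$; the term $\delta_0(g)$ appears precisely because $\delta(ga_0)=\delta_0(g)$ by definition and the higher powers of $y$ bring in the factor $\y^n$. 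Here Lemma~\ref{L:SS:gb:tl:ynyhatn}\ref{L:SS:gb:tl:ynyhatn:2} is again the tool converting powers of $hy$ into powers of $\y$.

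Part (c) then follows by combining (a), (b) with the Leibniz rule: $D(p(x)\y^k)=D(p(x))\y^k+p(x)D(\y^k)$. For the first summand, $D(p(x))=F_{D(x)}(p)$ in the notation of \eqref{E:res:defFalpha}, and by Lemma~\ref{L:res:propsFalpha}\ref{L:res:propsFalpha:4} together with Lemma~\ref{L:SS:gb:tl:Dmod}\ref{L:SS:gb:tl:Dmod:1}--\ref{L:SS:gb:tl:Dmod:2} this is $\equiv p'(x)D(x)\equiv n\pi_h g p'(x)\y^{n-1}\modd{h}$; multiplying by $\y^k$ on the right gives the first term $n\pi_h g p'(x)\y^{n+k-1}$ modulo $\gcd(h,h')$. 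For the second summand, Lemma~\ref{L:SS:gb:tl:Dmod}\ref{L:SS:gb:tl:Dmod:2} gives $D(\y^k)\equiv k\y^{k-1}D(\y)\equiv -k\delta_0(g)\y^{n+k-1}\modd{h}$ using (b); multiplying on the left by $p(x)$ and using that $\A/\gcd(h,h')\A$ is commutative (so $p(x)$ and $\delta_0(g)\y^{n+k-1}$ can be reordered up to terms in $\gcd(h,h')\A$) yields $-kp(x)\delta_0(g)\y^{n+k-1}$. Adding the two pieces gives the claimed congruence.

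The main obstacle I anticipate is part (b): computing $\ad_{ga_n}(\y)$ honestly for general $n$ requires unwinding the commutator $[h^{n-1}\pi_h y^n, yh]$ in $\A_1$ and then recognizing that, modulo $\gcd(h,h')\A$, all the messy lower-order terms (which a priori involve $h^{(j)}$ for various $j$ and powers of $y$ below $n$) collapse. Controlling those error terms is exactly what Lemma~\ref{L:SS:gb:tl:ynyhatn}\ref{L:SS:gb:tl:ynyhatn:1}--\ref{L:SS:gb:tl:ynyhatn:2} and \eqref{E:SS:gb:tl:ynyhatn:b} were designed for — e.g.\ $h^{k}[y^{k+1},h]\in\gcd(h,h')\A$ — so the real work is bookkeeping the binomial expansion of $[y^n,h]$ and matching it against the definition $\delta_0(g)=(g\pi_h)' - g\pi_h h'/h$. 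If a direct expansion proves too unwieldy, the fallback is an induction on $n$: use the relation $a_{n+1}$ versus $a_n$ (both lie in the normalizer of $\A$ in $\A_1$) to reduce $\ad_{ga_{n+1}}(\y)$ to $\ad_{(\cdot)a_n}(\y)$ plus a correction, and close the induction using the $n=0$ case $\ad_{ga_0}=-D_{\delta_0(g)}$ together with the commutativity of $\A/\gcd(h,h')\A$.
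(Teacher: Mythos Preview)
Your plan is essentially the paper's proof: part~(a) is the same direct computation plus Lemma~\ref{L:SS:gb:tl:ynyhatn}\ref{L:SS:gb:tl:ynyhatn:2}, part~(c) is the same application of Lemma~\ref{L:SS:gb:tl:Dmod}\ref{L:SS:gb:tl:Dmod:2} followed by substitution of (a) and (b), and for part~(b) the paper also expands $[\pi_h g h^{n-1}y^n,\y]$ directly in $\A_1$ using Lemma~\ref{L:SS:gb:tl:ynyhatn}\ref{L:SS:gb:tl:ynyhatn:1}--\ref{L:SS:gb:tl:ynyhatn:2} and the definition of $\delta_0$ (no induction is needed). One correction to your expectation in~(b): the reduction to $-\delta_0(g)\y^n$ does \emph{not} hold modulo $h$, only modulo $\gcd(h,h')$, because the expansion produces a term $\binom{n+1}{2}\pi_h g\, h''\, h^{n-1}y^{n-1}$ which is killed only after passing to $\gcd(h,h')$ (using that $\gcd(h,h')$ divides $\pi_h h''$); so your ``hence a fortiori'' parenthetical is needed in earnest, not merely as a weakening.
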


\begin{proof}
We have 
\begin{equation*}
D(x)=\gb{\pi_h gh^{n-1}y^n, x}=n\pi_h gh^{n-1}y^{n-1}\equiv n\pi_h g\y^{n-1} \modd{\gcd(h, h')}, 
\end{equation*}
where the last congruence comes from Lemma~\ref{L:SS:gb:tl:ynyhatn}\ref{L:SS:gb:tl:ynyhatn:2}. Also, 
\begin{align*}
D(\y)&= \gb{\pi_h gh^{n-1}y^n, \y}=\pi_h gh^{n-1}y^{n+1}h - y\pi_h gh^{n}y^n \\
&=\pi_h gh^{n}y^{n+1} + \pi_h gh^{n-1}\gb{y^{n+1},h} - \pi_h gh^{n}y^{n+1} - \gb{y,\pi_h gh^{n}}y^n\\
&\equiv  (n+1)\pi_h h' g h^{n-1}y^n +{n+1\choose 2}\pi_h g h'' h^{n-1} y^{n-1}   - \left(\pi_h gh^{n}\right)' y^n \modd{h}\\
&\equiv  (n+1)\pi_h h' g h^{n-1}y^n  - \left(\pi_h gh^{n}\right)' y^n \modd{\gcd(h, h')}\\
&\equiv  (n+1)\pi_h h' g h^{n-1}y^n  - n\pi_h gh' h^{n-1} y^n - \left(\pi_h g\right)' h^{n}y^n\modd{\gcd(h, h')}\\
&\equiv  \pi_h h' g h^{n-1}y^n  - \left(\pi_h g\right)' h^{n}y^n\modd{\gcd(h, h')}\\
&\equiv  \left(\frac{\pi_h h' g}{h}  - \left(\pi_h g\right)'\right) h^{n}y^n\modd{\gcd(h, h')}\\
&\equiv  -\delta_0(g)\y^n\modd{\gcd(h, h')},
\end{align*}
using Lemma~\ref{L:SS:gb:tl:ynyhatn}\ref{L:SS:gb:tl:ynyhatn:1} and \ref{L:SS:gb:tl:ynyhatn:2}, the fact that $\gcd(h, h')$ divides $h''\pi_h$ and \eqref{D:Gerst:char0:defdelta0}.

Finally, using Lemma~\ref{L:SS:gb:tl:Dmod}\,\ref{L:SS:gb:tl:Dmod:2},
\begin{align*}
D(a) &\equiv  D(p(x))\y^k+ p(x)D(\y^k)\\
&\equiv  p'(x)D(x)\y^k+ kp(x)D(\y)\y^{k-1}\\
&\equiv  \left(n\pi_h g p'(x)- kp(x)\delta_0(g)\right)\y^{n+k-1} \modd{\gcd(h, h')}.
\end{align*}
\end{proof}

Hence, for $D=\ad_{ga_n}$ and $a=p(x)\y^k\in\A$, we can now compute $\gb{D, a}$ as an element of $\D[\y]$, using \eqref{E:SS:MSAmethod:gb2}, Lemma~\ref{L:SS:gb:tl:s_mod}\ref{L:SS:gb:tl:s_mod:5}, Lemma~\ref{L:SS:gb:tl:ynyhatn}\ref{L:SS:gb:tl:ynyhatn:3} and recalling that $\gcd(h, h')$ divides $h''\pi_h$:
\begin{align*}
D(a)&\equiv\left( n\pi_h gp'(x)-kp(x)\delta_0(g) \right)\y^{n+k-1} \modd{\gcd(h, h')}
\end{align*}
\begin{align*}
\chi_a(G(D(x)))&=\chi_a(G(n\pi_h g h^{n-1}y^{n-1}))\\ 
&\equiv n (\pi_h g)'p(x)\y^{n+k-1} -n{n\choose 2}\pi_h g h''p(x)\y^{n+k-2}\\
&\equiv n (\pi_h g)'p(x)\y^{n+k-1}  \modd{\gcd(h, h')}
\end{align*}
\begin{align*}
\chi_a(\s_1(D(\y)\ot x\ot 1))&\equiv -\delta_0(g)\chi_a(\s_1(\y^n\ot x\ot 1))\\
&\equiv -n\delta_0(g)p(x)\y^{n+k-1}\modd{\gcd(h, h')}.
\end{align*}
It thus follows that, working in $\hoch^2(\A)=\A/\gcd(h, h')\A$ and recalling \eqref{D:Gerst:char0:defdelta0}:
\begin{align*}
\gb{D, a}&\equiv n\left( \pi_h g p'(x)- (\pi_h g)'p(x) \right)\y^{n+k-1} +
(n-k)p(x)\delta_0(g)\y^{n+k-1}\\
&\equiv \left(n\pi_h g p'(x) -ng\frac{\pi_h h'}{h}p(x)
-k\delta_0(g)p(x)\right) \y^{n+k-1} \modd{\gcd(h, h')}.
\end{align*}
Therefore, we have proved the main result of this subsection.

\begin{thm}\label{T:Gerst:char0:formula}
Assume that $\chara(\FF)=0$. The Lie action of $\hoch^1(\A)$ on $\hoch^2(\A)$ under the Gerstenhaber bracket is given by the following formulas:
\begin{align}\label{E:Gerst:char0:formula:actZ}
\gb{\mathsf{Z}(\hoch^1(\A)), \hoch^2(\A)}=0,
\end{align}
\begin{align}\label{E:Gerst:char0:formula:actadgan}
\gb{\ad_{ga_n}, -}&= n\pi_h g\y^{n-1}\frac{d}{dx}-\delta_0(g)\y^n\frac{d}{d\y}-ng\frac{\pi_h h'}{h}\y^{n-1}1_{\D[\y]},
\end{align}
for all $g\in\FF[x]$ and $n\geq 0$, where $a_n=\pi_h h^{n-1}y^n$.
\end{thm}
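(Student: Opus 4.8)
The plan is to assemble the two formulas from the computations already carried out in the discussion preceding the statement; no new tool is needed beyond Lemma~\ref{L:SS:Gerst:char0:Damodgcd} and the technical identities of Subsection~\ref{SS:gb:tl}, and the rest is bookkeeping. Throughout I identify $\hoch^2(\A)$ with $\D[\y]$ via Corollary~\ref{C:res:HH2inchar0}, so that an $\FF$-linear endomorphism of $\hoch^2(\A)$ is pinned down by its values on the monomials $p(x)\y^k$ with $p(x)\in\FF[x]$, $k\ge 0$, and I take as starting point the identity \eqref{E:SS:MSAmethod:gb2}, namely $\gb{D,a}=D(a)-\chi_a(G(D(x)))-\chi_a(\s_1(D(\y)\ot x\ot 1))$ in $\hoch^2(\A)$, valid for every $D\in\der(\A)$. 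By Theorem~\ref{T:Gerst:char0:recallBLO3} the classes of the $D_g$ spanning $\mathsf{Z}(\hoch^1(\A))$ together with the classes of the $\ad_{ga_n}$ span $\hoch^1(\A)$, so it is enough to prove \eqref{E:Gerst:char0:formula:actZ} and \eqref{E:Gerst:char0:formula:actadgan} for these generators.

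For \eqref{E:Gerst:char0:formula:actZ} I would use Theorem~\ref{T:Gerst:char0:recallBLO3}\ref{T:Gerst:char0:recallBLO3:2}, by which $\mathsf{Z}(\hoch^1(\A))$ is spanned by the classes of $D_g$ with $g\in\gcd(h,h')\FF[x]$; since $\gb{D_g,-}=g\,\frac{d}{d\y}$ on $\D[\y]$ by \eqref{E:SS:Gerst:char0:gbDg} and $g\equiv 0\pmod{\gcd(h,h')}$, this action is zero.

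For \eqref{E:Gerst:char0:formula:actadgan}, fix $g\in\FF[x]$ and $n\ge 0$, put $D=\ad_{ga_n}$ and $a=p(x)\y^k$. Lemma~\ref{L:SS:Gerst:char0:Damodgcd} supplies $D(x)=n\pi_h g h^{n-1}y^{n-1}$ and, modulo $\gcd(h,h')$, $D(\y)\equiv-\delta_0(g)\y^n$ and $D(a)\equiv(n\pi_h g\,p'(x)-k\,p(x)\delta_0(g))\y^{n+k-1}$. I would then evaluate the two correction terms of \eqref{E:SS:MSAmethod:gb2}: Lemma~\ref{L:SS:gb:tl:ynyhatn}\ref{L:SS:gb:tl:ynyhatn:3} applied with $D(x)$ in the role of $fh^ky^k$ gives $\chi_a(G(D(x)))\equiv n(\pi_h g)'p(x)\y^{n+k-1}$, the $h''$-term disappearing because $\gcd(h,h')\mid h''\pi_h$; and Lemma~\ref{L:SS:gb:tl:s_mod}\ref{L:SS:gb:tl:s_mod:5} applied with $D(\y)$ gives $\chi_a(\s_1(D(\y)\ot x\ot 1))\equiv-n\delta_0(g)p(x)\y^{n+k-1}$. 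Substituting into \eqref{E:SS:MSAmethod:gb2} and collecting terms, I get
\[
\gb{D,a}\equiv\bigl(n\pi_h g\,p'(x)-n(\pi_h g)'p(x)+(n-k)\delta_0(g)p(x)\bigr)\y^{n+k-1}\pmod{\gcd(h,h')}.
\]
The final move is to substitute $(\pi_h g)'=\delta_0(g)+g\,\tfrac{\pi_h h'}{h}$, which is exactly \eqref{D:Gerst:char0:defdelta0}, rewriting the right-hand side as $\bigl(n\pi_h g\,p'(x)-ng\,\tfrac{\pi_h h'}{h}p(x)-k\delta_0(g)p(x)\bigr)\y^{n+k-1}$; this is precisely the operator $n\pi_h g\y^{n-1}\tfrac{d}{dx}-\delta_0(g)\y^n\tfrac{d}{d\y}-ng\,\tfrac{\pi_h h'}{h}\y^{n-1}1_{\D[\y]}$ applied to $p(x)\y^k$. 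For $n=0$ everything reduces, via $\ad_{ga_0}=-D_{\delta_0(g)}$ and \eqref{E:SS:Gerst:char0:gbDg}, to $-\delta_0(g)\tfrac{d}{d\y}$, matching the formula once the terms carrying the factor $n$ are read as $0$.

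The genuine content being packaged in Lemma~\ref{L:SS:Gerst:char0:Damodgcd} and the lemmas of Subsection~\ref{SS:gb:tl}, the only delicate point in this last assembly is the bookkeeping of congruences — reducing each quantity modulo $h$ exactly where the auxiliary lemmas demand it before passing to the coarser congruence modulo $\gcd(h,h')$, and tracking the various $h''$-terms, all of which vanish because $\gcd(h,h')\mid h''\pi_h$. The sign juggling in the final rewriting through $\delta_0$ is where a slip is most likely, so that is the step I would write out with the greatest care.
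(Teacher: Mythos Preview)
Your proposal is correct and follows essentially the same route as the paper: both start from \eqref{E:SS:MSAmethod:gb2}, dispatch the center via \eqref{E:SS:Gerst:char0:gbDg} and Theorem~\ref{T:Gerst:char0:recallBLO3}\ref{T:Gerst:char0:recallBLO3:2}, then evaluate the three pieces $D(a)$, $\chi_a(G(D(x)))$, $\chi_a(\s_1(D(\y)\ot x\ot 1))$ for $D=\ad_{ga_n}$ using Lemma~\ref{L:SS:Gerst:char0:Damodgcd}, Lemma~\ref{L:SS:gb:tl:ynyhatn}\ref{L:SS:gb:tl:ynyhatn:3} and Lemma~\ref{L:SS:gb:tl:s_mod}\ref{L:SS:gb:tl:s_mod:5}, and simplify through \eqref{D:Gerst:char0:defdelta0}. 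Your explicit remark on the $n=0$ case is a small addition not spelled out in the paper.
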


\subsection{The structure of $\hoch^2(\A)$ as a Lie module over $\hoch^1(\A)$}\label{SS:Gerst:char0:struct:gc}

Recall that $h=\pr_1^{\alpha_1}\cdots \pr_t^{\alpha_t}$, where $\pr_1, \ldots, \pr_t$ are the prime factors of $h$, ordered so that $\alpha_1, \ldots, \alpha_k\geq 2$ and $\alpha_{k+1}= \cdots = \alpha_t=1$ for $0\leq k\leq t$, as in Theorem~\ref{T:Gerst:char0:recallBLO3}. If $k=0$, then $\gcd(h, h')=1$ and in this case $\hoch^2(\A)=0$. Thus, we suppose throughout this subsection that $k\geq 1$. Then, 
\begin{equation*}
\pi_h= \pr_1\cdots \pr_t, \quad \gcd(h, h')=h/\pi_h=\pr_1^{\alpha_1-1}\cdots \pr_k^{\alpha_k-1}, \quad \pi_{(h/\pi_h)}=\pr_1\cdots \pr_k.
\end{equation*}
Let us fix $m_h=\max\{ \alpha_j-1 \mid 1\leq j\leq k\}\geq 1$.

We make the identification $\hoch^2(\A)=\D[\y]$, where $\D=\FF[x]/\gcd(h, h')\FF[x]$. Since the $\pr_i^{\alpha_i-1}$, $1\leq i\leq k$, are pairwise coprime, 
\begin{equation*}
\D\cong \FF[x]/\pr_1^{\alpha_1-1}\FF[x] \oplus \cdots \oplus \FF[x]/\pr_k^{\alpha_k-1}\FF[x]
\end{equation*}
and there exist nonzero pairwise orthogonal idempotents $e_1, \ldots, e_k\in\D$ with $e_1 + \cdots + e_k=1$, $\D=\D e_1\oplus \cdots \oplus \D e_k$ and $\D e_i\cong \FF[x]/\pr_i^{\alpha_i-1}\FF[x]$ (these isomorphisms are both as algebras and as left $\FF[x]$-modules). Define $\D_i=\D e_i$. Then $\hoch^2(\A)=\D_1[\y]\oplus \cdots\oplus\D_k[\y]$.

Let $\barr{\D}=\FF[x]/\pr_1\cdots\pr_k\FF[x]\cong \FF[x]/\pr_1\FF[x]\oplus\cdots\oplus\FF[x]/\pr_k\FF[x]$. Then, by Theorem~\ref{T:Gerst:char0:recallBLO3}\ref{T:Gerst:char0:recallBLO3:4}, we have 
\begin{equation*}
[\hoch^1 (\A), \hoch^1 (\A)]/\mathcal N \cong \barr{\D} \otimes \W\cong \W_1\oplus\cdots\oplus\W_k,
\end{equation*}
with $\W_i=\left(\FF[x]/\pr_{i}\FF[x]\right) \otimes \W$. As the notation suggests, the algebra $\barr{\D}$ is a quotient of $\D$ by the ideal $\pr_1\cdots\pr_k\D$. Let $\barr{e_1}, \ldots, \barr{e_k}\in\barr{\D}$ be the images of the idempotents $e_1, \ldots, e_k\in\D$ under the canonical epimorphism. It is straightforward to see that these are still nonzero pairwise orthogonal idempotents in $\barr{\D}$ with $\barr{e_1} + \cdots + \barr{e_k}=1$, $\barr{\D}=\barr{\D} \barr{e_1}\oplus \cdots \oplus \barr{\D} \barr{e_k}$ and $\barr{\D} \barr{e_i}\cong \FF[x]/\pr_i\FF[x]$. Denote this field $\barr{\D} \barr{e_i}=\barr{\D_i}$ by $\barr{\D}_i$. Then,
\begin{equation}\label{E:Gerst:char0:struct:ssdecomp}
[\hoch^1 (\A), \hoch^1 (\A)]/\mathcal N \cong \left(\barr{\D}_1\otimes\W\right) \oplus\cdots\oplus \left(\barr{\D}_k\otimes\W\right).
\end{equation}

For $i\geq 0$, set
\begin{equation*}
\Theta_i=\prod_{j=1}^k \pr_j^{\min\{ \alpha_j -1, i\}}.
\end{equation*}
Thus, $\Theta_0=1$, $\Theta_1=\pr_1\cdots \pr_k=\pi_{(h/\pi_h)}$ and for any $i\geq m_h$, $\Theta_i=\gcd(h, h')$. Finally, define
\begin{equation*}
P_i =\Theta_i \D[\y]\subseteq \hoch^2(\A). 
\end{equation*}

We record a few useful facts below.

\begin{lemma}\label{L:Gerst:char0:struct:thi}
For $i\geq 0$,we have:
\begin{enumerate}[label=\textup{(\alph*)}]
\item $\Theta_{i+1}=\Theta_{i}\left(\prod_{\alpha_j\geq i+2} \pr_j\right)$.\label{L:Gerst:char0:struct:thi:1}
\item $\pi_h \Theta_i'\equiv i\Theta_i\pi_h' \modd{\Theta_{i+1}\FF[x]}$.\label{L:Gerst:char0:struct:thi:2}
\item $P_i=\Theta_i \D[\y]$ is a Lie $\hoch^1(\A)$-submodule of $\hoch^2(\A)$ and there is a strictly decreasing filtration
\begin{equation}\label{E:Gerst:char0:struct:filtr}
\hoch^2(\A)=P_0\supsetneq P_1\supsetneq \cdots \supsetneq P_{m_h-1}\supsetneq P_{m_h}=0. 
\end{equation} \label{L:Gerst:char0:struct:thi:3}
\end{enumerate}
\end{lemma}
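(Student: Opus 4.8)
The plan is to prove the three parts of Lemma~\ref{L:Gerst:char0:struct:thi} in order, since (a) is elementary, (b) follows from (a) together with a product-rule computation, and (c) is where the representation-theoretic content of Theorem~\ref{T:Gerst:char0:formula} enters.

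\textbf{Part (a).} This is purely about the exponents appearing in $\Theta_i=\prod_j \pr_j^{\min\{\alpha_j-1,\,i\}}$. For each prime factor $\pr_j$, the exponent of $\pr_j$ in $\Theta_{i+1}$ exceeds that in $\Theta_i$ by exactly $1$ when $\min\{\alpha_j-1,i+1\}>\min\{\alpha_j-1,i\}$, i.e.\ precisely when $i<\alpha_j-1$, equivalently $\alpha_j\geq i+2$; otherwise the exponents agree. Multiplying over $j$ gives $\Theta_{i+1}=\Theta_i\bigl(\prod_{\alpha_j\geq i+2}\pr_j\bigr)$.

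\textbf{Part (b).} Write $\Theta_i=\prod_j \pr_j^{c_j}$ with $c_j=\min\{\alpha_j-1,i\}$ and $\pi_h=\prod_j\pr_j$. Using logarithmic derivatives, $\Theta_i'/\Theta_i=\sum_j c_j\,\pr_j'/\pr_j$ and $\pi_h'/\pi_h=\sum_j \pr_j'/\pr_j$, so that $\pi_h\Theta_i'-i\Theta_i\pi_h' = \Theta_i\pi_h\sum_j (c_j-i)\,\pr_j'/\pr_j$. Now $c_j-i=0$ unless $\alpha_j-1<i$, i.e.\ $\alpha_j\le i$, equivalently $\alpha_j<i+2$ does not hold only in the complementary range; more precisely, the summand for $j$ vanishes exactly when $i\le\alpha_j-1$, i.e.\ $\alpha_j\ge i+1$. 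For the remaining indices, $\alpha_j\le i$, so $\pr_j^{\alpha_j-1}$ is the full power of $\pr_j$ in $\gcd(h,h')$, and in particular $\pr_j$ divides $\Theta_{i+1}$; the term $\Theta_i\pi_h(c_j-i)\pr_j'/\pr_j$ is then $\Theta_i(\pi_h/\pr_j)(c_j-i)\pr_j'$, which is divisible by $\Theta_{i+1}$ because one checks using (a) that $\Theta_{i+1}/\pr_j^{\,?}$ already divides $\Theta_i(\pi_h/\pr_j)$ — here I would simply verify the $\pr_\ell$-adic valuation for each $\ell$, which is routine. Hence $\pi_h\Theta_i'\equiv i\Theta_i\pi_h'\pmod{\Theta_{i+1}\FF[x]}$.

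\textbf{Part (c).} The filtration inclusions and strictness are immediate: $\Theta_{i+1}$ is a proper multiple of $\Theta_i$ in $\FF[x]/\gcd(h,h')\FF[x]$ for $i<m_h$ by part (a) (some $\alpha_j=m_h+1\ge i+2$), while $\Theta_{m_h}=\gcd(h,h')$ so $P_{m_h}=0$, and $\Theta_0=1$ so $P_0=\hoch^2(\A)$. The substantive claim is that $P_i=\Theta_i\D[\y]$ is stable under $\gb{\hoch^1(\A),-}$. By \eqref{E:Gerst:char0:formula:actZ} it suffices to test the operators $\gb{\ad_{ga_n},-}$ of \eqref{E:Gerst:char0:formula:actadgan}. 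Applying that formula to a typical element $\Theta_i p(x)\y^m\in P_i$, the three terms are $n\pi_h g\,(\Theta_i p)'\y^{n+m-1}$, $-\delta_0(g)\Theta_i p\,\y^{n+m-1}$ (times $m$), and $-ng\frac{\pi_h h'}{h}\Theta_i p\,\y^{n+m-1}$. The last two are visibly in $\Theta_i\D[\y]$ since $\frac{\pi_h h'}{h}=\frac{h'}{\gcd(h,h')}\in\FF[x]$ and $\delta_0(g)\in\FF[x]$ by \eqref{D:Gerst:char0:defdelta0}. For the first term, expand $(\Theta_i p)'=\Theta_i' p+\Theta_i p'$; the piece $n\pi_h g\Theta_i p'\y^{n+m-1}$ lies in $P_i$, and for the piece $n\pi_h g\Theta_i' p\,\y^{n+m-1}$ I invoke part (b): $\pi_h\Theta_i'\equiv i\Theta_i\pi_h'\pmod{\Theta_{i+1}\FF[x]}$, and since $\Theta_i$ divides $\Theta_{i+1}$ this gives $\pi_h\Theta_i'\in\Theta_i\FF[x]$, so the term is in $P_i$. (In fact one gets the sharper statement that modulo $P_{i+1}$ this term equals $i\,ng\pi_h' p\,\Theta_i\y^{n+m-1}$, which will be needed later to identify the subquotients, but stability only requires membership in $P_i$.) Thus each generator of $\hoch^1(\A)$ preserves $P_i$, completing the proof.

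The main obstacle is the divisibility bookkeeping in part (b): one must track, for every prime $\pr_\ell$ dividing $h$, the $\pr_\ell$-adic valuations of $\pi_h\Theta_i'$, $\Theta_i\pi_h'$, and $\Theta_{i+1}$ simultaneously, and the derivative $\Theta_i'$ can drop the valuation of a repeated factor by one, so the estimates are slightly delicate. Everything else is formal manipulation with the explicit formula from Theorem~\ref{T:Gerst:char0:formula}.
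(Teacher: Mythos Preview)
Your arguments for parts (a) and (c) are correct and match the paper's proof essentially line for line; in particular, the paper also reduces (c) to checking stability under $\ad_{ga_n}$, expands $(\Theta_i p)'$ via the product rule, and invokes (b) to conclude $\pi_h\Theta_i'\in\Theta_i\FF[x]$.

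For part (b) you take a genuinely different route. The paper proves (b) by induction on $i$: assuming $\pi_h\Theta_i'\equiv i\Theta_i\pi_h'\pmod{\Theta_{i+1}}$, it uses (a) to write $\Theta_{i+1}=\Theta_i\cdot Q$ with $Q=\prod_{\alpha_j\ge i+2}\pr_j$, differentiates the product, and reduces modulo $\Theta_{i+2}$, exploiting the identity $\pi_h=Q\cdot\prod_{\alpha_j\le i+1}\pr_j$. Your approach is direct: you write the difference $\pi_h\Theta_i'-i\Theta_i\pi_h'$ as $\sum_j(c_j-i)\,\Theta_i(\pi_h/\pr_j)\pr_j'$, observe that only indices with $\alpha_j\le i$ survive, and then check term by term that $\Theta_{i+1}$ divides $\Theta_i(\pi_h/\pr_j)$ for each such $j$ via a $\pr_\ell$-adic valuation count. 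This is valid (the valuation check you defer as ``routine'' does go through: for $\ell=j$ both sides carry $\pr_j^{\alpha_j-1}$, and for $\ell\neq j$ one has $\min\{\alpha_\ell-1,i\}+1\ge\min\{\alpha_\ell-1,i+1\}$), and it has the advantage of being non-inductive and making the structure of the difference transparent. The paper's induction, on the other hand, avoids the case-by-case valuation bookkeeping that you yourself flag as the ``main obstacle''. Either method is fine; your written account of (b) would benefit from actually carrying out the valuation check rather than calling it routine, and from cleaning up the garbled sentence about ``$\alpha_j<i+2$ does not hold only in the complementary range''.
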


\begin{proof}
\ref{L:Gerst:char0:struct:thi:1} is clear from the definition. The identity in \ref{L:Gerst:char0:struct:thi:2} holds trivially for $i=0$ and we prove it by induction on $i\geq 0$. So assume that $\pi_h \Theta_i' = i\Theta_i\pi_h' +\Theta_{i+1}f$, for some $f\in\FF[x]$. As $\Theta_{i+1}\left(\prod_{\alpha_j\geq i+2} \pr_j\right)\in\Theta_{i+2}\FF[x]$, by \ref{L:Gerst:char0:struct:thi:1}, we have 
\begin{align*}
\pi_h \Theta_{i+1}' &=\pi_h\left(\Theta_{i}\prod_{\alpha_j\geq i+2} \pr_j\right)'=\pi_h\Theta_{i}'\left(\prod_{\alpha_j\geq i+2} \pr_j\right)
+\pi_h\Theta_{i}\left(\prod_{\alpha_j\geq i+2} \pr_j\right)'\\
&=\left( i\Theta_i\pi_h' +\Theta_{i+1}f\right)\left(\prod_{\alpha_j\geq i+2} \pr_j\right)
+\pi_h\Theta_{i}\left(\prod_{\alpha_j\geq i+2} \pr_j\right)'\\
&\equiv i\Theta_i\pi_h'\left(\prod_{\alpha_j\geq i+2} \pr_j\right) + \pi_h\Theta_{i}\left(\prod_{\alpha_j\geq i+2} \pr_j\right)' \ \modd{\Theta_{i+2}\FF[x]}\\
&= i\Theta_{i+1}\pi_h' + \Theta_{i+1}\left(\prod_{1\leq\alpha_j\leq i+1} \pr_j\right)\left(\prod_{\alpha_j\geq i+2} \pr_j\right)' \\
&= i\Theta_{i+1}\pi_h' + \Theta_{i+1}\left(\pi_h'-\left(\prod_{1\leq\alpha_j\leq i+1} \pr_j\right)'\left(\prod_{\alpha_j\geq i+2} \pr_j\right)\right)  \\
&\equiv i\Theta_{i+1}\pi_h' + \Theta_{i+1}\pi_h' \ \modd{\Theta_{i+2}\FF[x]}.
\end{align*}

The fact that \eqref{E:Gerst:char0:struct:filtr} is a decreasing filtration of vector spaces is clear because $\Theta_{i}$ divides $\Theta_{i+1}$. Since the quotient $\prod_{\alpha_j\geq i+2} \pr_j$ of these polynomials is not a unit, for $0\leq i\leq m_h-1$, by the definition of $m_h$, the filtration is strict. Thus, it remains to show that $\gb{\ad_{ga_n}, P_i}\subseteq P_i$, for all $g\in\FF[x]$ and $n, i\geq 0$. 
By \eqref{E:Gerst:char0:formula:actadgan}, given $f\in\FF[x]$ and $\ell\geq 0$:
\begin{align*}
\gb{\ad_{ga_n}, \Theta_i f\y^\ell} &= n\pi_h g\Theta_i f'\y^{n+\ell-1}+n\pi_h g\Theta_i'f\y^{n+\ell-1}\\ 
&\qquad-\ell\delta_0(g)\Theta_i f\y^{n+\ell-1}-ng\frac{\pi_h h'}{h}\Theta_i f\y^{n+\ell-1},
\end{align*}
which is in $P_i$ because $\pi_h\Theta_i'\in\Theta_i\FF[x]$.
\end{proof}

Set $S_i=P_i/P_{i+1}$, for $0\leq i\leq m_h-1$. We have seen that $S_i$ is a nonzero $\hoch^1(\A)$-module under the action induced from the Gerstenhaber bracket. Noting that $\delta_0(g)=g\delta_0(1)+g'\pi_h$ (see \cite[Lem.\ 4.14]{BLO15ja}) and $\pi_h\Theta_i\in\Theta_{i+1}\FF[x]$, we see that this action is completely described by the following computation in $S_i$:
\begin{align}\nonumber
\gb{\ad_{ga_n}, \Theta_i f\y^\ell} &\equiv 
fg\left(n\pi_h \Theta_i'-\ell \delta_0(1)\Theta_i -n\frac{\pi_h h'}{h}\Theta_i \right) \y^{n+\ell-1}, \\ \label{E:Gerst:char0:struct:actionSi}
&\equiv fg\Theta_i\left(in\pi_h' -\ell\delta_0(1)-n\frac{\pi_h h'}{h}  \right)\y^{n+\ell-1} \ \modd{P_{i+1}}.
\end{align}
In particular, $\gb{\ad_{ga_n}, S_i}=0$ if $g\in\pr_1\cdots \pr_k\FF[x]=\Theta_1\FF[x]$ because $\Theta_1\Theta_i\in\Theta_{i+1}\FF[x]$. So, $\gb{\mathcal{N}, S_i}=0$ for all $i\geq 0$, where $\mathcal{N}$ is the unique maximal  nilpotent ideal  of 
$[\hoch^1(\A), \hoch^1(\A)]$, as in Theorem~\ref{T:Gerst:char0:recallBLO3}. It follows that $S_i$ is naturally a $[\hoch^1 (\A), \hoch^1 (\A)]/\mathcal N$-module.

Note that
$S_i\cong \left(\Theta_i \D/\Theta_{i+1} \D \right)[\y]$.
Then, the definitions of $\D$, $\Theta_i$ and $m_h-1$, along with Lemma~\ref{L:Gerst:char0:struct:thi}\ref{L:Gerst:char0:struct:thi:1} imply that there is a natural isomorphism of vector spaces induced by the natural map $\D\twoheadrightarrow \Theta_i \D/\Theta_{i+1}\D$:
\begin{equation}\label{E:Gerst:char0:struct:Si}
S_i\cong \frac{\D}{\left(\prod_{\alpha_j\geq i+2} \pr_j\right) \D}[\y] \cong \bigoplus_{\alpha_j\geq i+2} \barr{\D}_j[\y],
\quad \mbox{for all $0\leq i\leq m_h-1$.}
\end{equation}
By the above isomorphisms, the element $\Theta_i f\y^\ell+\Theta_{i+1} \D[\y]\in S_i$ is identified with the element $\sum_{\alpha_j\geq i+2}
f\barr{e_j} \y^\ell\in \bigoplus_{\alpha_j\geq i+2} \barr{\D}_j[\y]$.

Our next step is to describe the Lie algebra isomorphism \eqref{E:Gerst:char0:struct:ssdecomp}. We will need the following.

\begin{lemma}\label{L:Gerst:char0:struct:nu}
There is an element $\nu\in\FF[x]$, determining a unique class modulo $\Theta_1\FF[x]$, such that $\nu\delta_0(1)\equiv 1 \modd{\Theta_1\FF[x]}$. For such an element, the following hold:
\begin{enumerate}[label=\textup{(\alph*)}]
\item $\nu\pi_h'-1\equiv \nu\frac{\pi_h h'}{h} \modd{\Theta_1\FF[x]}$;\label{L:Gerst:char0:struct:nu:1}
\item $\nu\pi_h'\equiv \frac{1}{1-\alpha_j} \modd{\pr_j\FF[x]}$, for all $1\leq j\leq k$.\label{L:Gerst:char0:struct:nu:2}
\end{enumerate}
\end{lemma}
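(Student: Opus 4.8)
The plan is to pass to the Chinese Remainder decomposition $\FF[x]/\Theta_1\FF[x]\cong\prod_{j=1}^{k}\FF[x]/\pr_j\FF[x]$, each factor being a field since $\pr_j$ is irreducible, and to reduce the entire statement to a short closed-form computation of $\delta_0(1)$ modulo the $\pr_j$.

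First I would write $\delta_0(1)$ explicitly. By \eqref{D:Gerst:char0:defdelta0}, $\delta_0(1)=\pi_h'-\frac{\pi_h h'}{h}$, and since $h=\pi_h\gcd(h, h')$ (recall $h$ is monic) one has $\frac{\pi_h h'}{h}=\frac{h'}{\gcd(h, h')}=\pi_h'+\frac{\pi_h\,(\gcd(h, h'))'}{\gcd(h, h')}$, so that, using $\pi_h=\pr_1\cdots\pr_t$, $\gcd(h, h')=\pr_1^{\alpha_1-1}\cdots\pr_k^{\alpha_k-1}$ and logarithmic differentiation,
\begin{equation*}
\delta_0(1)=-\frac{\pi_h\,(\gcd(h, h'))'}{\gcd(h, h')}=-\sum_{j=1}^{k}(\alpha_j-1)\,\pr_j'\prod_{\substack{1\le\ell\le t\\ \ell\ne j}}\pr_\ell .
\end{equation*}
Reducing modulo $\pr_j$ for $1\le j\le k$, only the $j$-th summand survives, so
\begin{equation*}
\delta_0(1)\equiv-(\alpha_j-1)\,\pr_j'\prod_{\ell\ne j}\pr_\ell\modd{\pr_j}.
\end{equation*}
Because $\chara(\FF)=0$ this is a unit in the field $\FF[x]/\pr_j\FF[x]$: the scalar $\alpha_j-1$ is invertible since $\alpha_j\ge2$, $\pr_j$ is separable and hence coprime to $\pr_j'$, and each $\pr_\ell$ with $\ell\ne j$ is coprime to $\pr_j$. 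Thus $\delta_0(1)$ is a unit in $\FF[x]/\Theta_1\FF[x]$, and taking $\nu$ to be a lift of its inverse yields the existence of $\nu$, unique modulo $\Theta_1\FF[x]$, with $\nu\delta_0(1)\equiv1\modd{\Theta_1\FF[x]}$.

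Part \ref{L:Gerst:char0:struct:nu:1} is then immediate: from $\frac{\pi_h h'}{h}=\pi_h'-\delta_0(1)$ and the defining property of $\nu$ we obtain $\nu\frac{\pi_h h'}{h}=\nu\pi_h'-\nu\delta_0(1)\equiv\nu\pi_h'-1\modd{\Theta_1\FF[x]}$. For part \ref{L:Gerst:char0:struct:nu:2}, fix $1\le j\le k$ and work modulo $\pr_j$: in $\pi_h'=\sum_{i=1}^{t}\pr_i'\prod_{\ell\ne i}\pr_\ell$ only the $i=j$ term survives, so $\pi_h'\equiv\pr_j'\prod_{\ell\ne j}\pr_\ell\modd{\pr_j}$, and comparing with the boxed congruence for $\delta_0(1)$ gives $\delta_0(1)\equiv(1-\alpha_j)\pi_h'\modd{\pr_j}$. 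Since $\pr_j\mid\Theta_1$ we also have $\nu\delta_0(1)\equiv1\modd{\pr_j}$, whence $(1-\alpha_j)\,\nu\pi_h'\equiv1\modd{\pr_j}$, i.e.\ $\nu\pi_h'\equiv\frac{1}{1-\alpha_j}\modd{\pr_j\FF[x]}$, which is legitimate because $1-\alpha_j\in\FF^{\times}$.

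There is no genuine obstacle here: the only real content is the closed-form computation of $\delta_0(1)$ and the remark that, in characteristic $0$, each multiplicity drop $\alpha_j-1$ is invertible and each $\pr_j$ is separable, so that $\delta_0(1)$ is coprime to $\Theta_1=\pr_1\cdots\pr_k$; everything else is Chinese Remainder bookkeeping. The same computation simultaneously identifies $\delta_0(1)$ modulo each $\pr_j$ with $(1-\alpha_j)\pi_h'$, which is precisely what feeds into \ref{L:Gerst:char0:struct:nu:2}.
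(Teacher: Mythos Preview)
Your proof is correct and follows essentially the same approach as the paper: both compute $\delta_0(1)$ explicitly as $\sum_{i=1}^{k}(1-\alpha_i)\pr_i'\prod_{\ell\ne i}\pr_\ell$, observe that this is coprime to $\Theta_1=\pr_1\cdots\pr_k$ (you via CRT and separability, the paper via a direct gcd statement), and then deduce (a) trivially and (b) by reducing modulo each $\pr_j$ to get $\delta_0(1)\equiv(1-\alpha_j)\pi_h'$. The only cosmetic difference is that you reach the formula for $\delta_0(1)$ through the logarithmic derivative of $\gcd(h,h')$, whereas the paper expands $\pi_h'$ and $\pi_h h'/h$ separately and subtracts.
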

\begin{proof}
We have $\pi_h'=\sum_{i=1}^t \pr_1 \cdots \widehat \pr_i \cdots \pr_t \pr_i'$ and $\frac{\pi_h h'}{h}=\sum_{i=1}^t \alpha_i\pr_1 \cdots \widehat \pr_i \cdots \pr_t \pr_i'$, so in particular, $\delta_0(1)=\pi_h'-\frac{\pi_h h'}{h}=\pr_{k+1}\cdots \pr_t\sum_{i=1}^k  (1-\alpha_i)  \pr_1 \cdots \widehat \pr_i \cdots \pr_k \pr_i'$ and $\gcd(\delta_0(1), \Theta_1) =1$. This shows the existence of $\nu$ with $\nu\delta_0(1)\equiv 1 \modd{\Theta_1\FF[x]}$ and also proves \ref{L:Gerst:char0:struct:nu:1}. 

Fix $1\leq j\leq k$. Then $\pi_h'\equiv \pr_1 \cdots \widehat \pr_j \cdots \pr_t \pr_j' \modd{\pr_j\FF[x]}$ and $\frac{\pi_h h'}{h}\equiv \alpha_j\pr_1 \cdots \widehat \pr_j \cdots \pr_t \pr_j' \equiv \alpha_j \pi_h' \modd{\pr_j\FF[x]}$. But, by~\ref{L:Gerst:char0:struct:nu:1}, we also have 
$\nu\pi_h'- \nu\frac{\pi_h h'}{h}\equiv 1 \modd{\pr_j\FF[x]}$, so $(1-\alpha_j)\nu\pi_h'\equiv 1 \modd{\pr_j\FF[x]}$ and~\ref{L:Gerst:char0:struct:nu:2} follows since $\alpha_j\geq 2$.
\end{proof}

Based on the proof of \cite[Lem.\ 5.19]{BLO15ja} and the definition of $\barr{\D}_q$, we can deduce that under the isomorphism \eqref{E:Gerst:char0:struct:ssdecomp}, the element $g\barr{e_q}\otimes w_m\in \barr{\D}_q\otimes\W$ is mapped to $-\ad_{ge_q\nu a_{m+1}}+\mathcal{N}\in [\hoch^1 (\A), \hoch^1 (\A)]/\mathcal N$, for $1\leq q\leq k$, $g\in\FF[x]$ and $m\geq -1$, where $\nu$ is as in Lemma~\ref{L:Gerst:char0:struct:nu}. Using these identifications and those in \eqref{E:Gerst:char0:struct:Si}, we have:
\begin{align*}
&\left(g\barr{e_q}  \otimes w_m\right) . \left(\sum_{\alpha_j\geq i+2} f\barr{e_j} \y^\ell\right) =
-\gb{\ad_{ge_q\nu a_{m+1}}, \Theta_i f\y^\ell}\\
&\quad = \Theta_i fg\barr{e_q}\left(-i(m+1)\nu\pi_h' +\ell\nu\delta_0(1)+(m+1)\nu\frac{\pi_h h'}{h}  \right)\y^{m+\ell}
\ \modd{P_{i+1}}\\
&\quad = \Theta_i fg\barr{e_q}\left((1-i)(m+1)\nu\pi_h' +\ell-(m+1)  \right)\y^{m+\ell}
\ \modd{P_{i+1}}\\
&\quad = \sum_{\alpha_j\geq i+2}  fg\barr{e_j}\barr{e_q}\left(\ell-(m+1)\left(1-(1-i)\nu\pi_h'\right)  \right)\y^{m+\ell}\\
&\quad =
\begin{cases}
fg\barr{e_q}\left(\ell-(m+1)\left(1-(1-i)\nu\pi_h'\right)  \right)\y^{m+\ell} & \mbox{if $\alpha_q\geq i+2$}\\
0 & \mbox{if $\alpha_q\leq i+1$,}
\end{cases}
\end{align*}
by \eqref{E:Gerst:char0:struct:actionSi} and Lemma~\ref{L:Gerst:char0:struct:nu}, as $\Theta_{i+1}$ divides $\Theta_1\Theta_i$. Moreover, we can use Lemma~\ref{L:Gerst:char0:struct:nu}\ref{L:Gerst:char0:struct:nu:2}   since $\pr_q\barr{e_q}=0$ in $\barr{D}_q$, yielding:
\begin{align*}
\left(g\barr{e_q}  \otimes w_m\right) . \left(\sum_{\alpha_j\geq i+2} f\barr{e_j} \y^\ell\right) =
\begin{cases}
fg\barr{e_q}\left(\ell-(m+1)\frac{\alpha_q-i}{\alpha_q-1}  \right)\y^{m+\ell} & \mbox{if $\alpha_q\geq i+2$;}\\
0 & \mbox{if $\alpha_q\leq i+1$.}
\end{cases}
\end{align*}

The above shows that $\barr{\D}_q\otimes\W$ acts trivially on $\barr{\D}_j[\y]\subseteq S_i$ except if $j=q$ and $\alpha_q\geq i+2$. In the latter case, the action of $\barr{\D}_q\otimes\W$ on $\barr{\D}_q[\y]$ is given by
\begin{equation}\label{E:Gerst:char0:struct:Si:ss:action}
\left(g\barr{e_q}  \otimes w_m\right) . \left(f\barr{e_q} \y^\ell\right) = fg\barr{e_q}\left(\ell-(m+1)\frac{\alpha_q-i}{\alpha_q-1}  \right)\y^{m+\ell}.
\end{equation}
In particular, each $\barr{\D}_j[\y]\subseteq S_i$ in the decomposition \eqref{E:Gerst:char0:struct:Si} is an $\hoch^1(\A)$-submodule of $S_i$.

Notice that in \eqref{E:Gerst:char0:struct:Si:ss:action}, the elements $f\barr{e_q}$ and $g\barr{e_q}$ are scalars in the field extension $\barr{\D}_q\cong\FF[x]/\pr_q\FF[x]$ of $\FF$ and the action \eqref{E:Gerst:char0:struct:Si:ss:action} is $\barr{\D}_q$-linear. This motivates the following definition. Fix a scalar $\mu\in\FF$ and let $V_\mu=\FF[\y]$. Define an action of the Witt algebra $\W$ on $V_\mu$ by
\begin{equation}\label{E:Gerst:char0:struct:Vmu:action}
w_m . \y^\ell =(\ell-(m+1)\mu)\y^{m+\ell}, \qquad \mbox{for all $m\geq -1$ and $\ell\geq 0$.}
\end{equation}
It can be verified that this indeed defines an action of $\W$ on $V_\mu$, for any $\mu\in\FF$ (for $\mu$ of the form $\frac{\alpha-i}{\alpha-1}$ with $\alpha\geq i+2$ this statement is implied by \eqref{E:Gerst:char0:struct:Si:ss:action}).

The module $V_{\mu}$ is related to the intermediate series modules for the Witt and Virasoro algebras (compare \eqref{E:Gerst:char0:struct:Umu:action}, ahead). Next, we record irreducibility and isomorphism criteria for these modules.

\begin{lemma}\label{L:Gerst:char0:struct:Vmu:irred}
For $\FF$ an arbitrary field of characteristic $0$ and $\mu\in\FF$, let $V_\mu$ be the $\W$-module defined in \eqref{E:Gerst:char0:struct:Vmu:action}. Then:
\begin{enumerate}[label=\textup{(\alph*)}]
\item $V_\mu$ is irreducible if and only if $\mu\neq 0$;\label{L:Gerst:char0:struct:Vmu:irred:1}
\item $V_\mu\cong V_{\mu'}$ if and only if $\mu=\mu'$.\label{L:Gerst:char0:struct:Vmu:irred:2}
\end{enumerate}
\end{lemma}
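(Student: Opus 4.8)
The plan is to base everything on the $w_0$-weight decomposition of $V_\mu$. Since $w_0\cdot\y^\ell=(\ell-\mu)\y^\ell$, the operator $w_0$ is diagonal on the basis $\{\y^\ell\}_{\ell\geq0}$, and because $\chara(\FF)=0$ its eigenvalues $\ell-\mu$ are pairwise distinct. A standard Vandermonde argument then shows that every $\W$-submodule $M$ of $V_\mu$ is $w_0$-stable and hence is spanned by the monomials $\y^\ell$ it contains: if $v=\sum_{i=1}^{r}c_i\y^{\ell_i}\in M$ with all $c_i\neq0$ and the $\ell_i$ distinct, then $v, w_0\cdot v,\dots,w_0^{r-1}\cdot v$ are related to $\y^{\ell_1},\dots,\y^{\ell_r}$ by an invertible Vandermonde matrix in the scalars $\ell_i-\mu$, so each $\y^{\ell_i}$ lies in $M$. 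From this reduction the whole lemma becomes bookkeeping with the operators $w_{-1}$ and $w_m$ for $m\geq0$.

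For part (a): if $\mu=0$, then $w_m\cdot1=\bigl(0-(m+1)\cdot0\bigr)\y^m=0$ for every $m\geq-1$, so $\FF\cdot1$ is a proper nonzero submodule and $V_0$ is reducible. If $\mu\neq0$, let $M\neq0$ be a submodule; by the reduction above it contains some $\y^{\ell_0}$. Using $w_{-1}\cdot\y^j=j\,\y^{j-1}$, which is nonzero for $j\geq1$ since $\chara(\FF)=0$, repeated application of $w_{-1}$ yields $1\in M$; then $w_m\cdot1=-(m+1)\mu\,\y^m$ is nonzero for every $m\geq0$ (this uses both $\mu\neq0$ and $\chara(\FF)=0$), so $\y^m\in M$ for all $m\geq0$ and $M=V_\mu$. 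Hence $V_\mu$ is irreducible exactly when $\mu\neq0$.

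For part (b): the implication $\mu=\mu'\Rightarrow V_\mu\cong V_{\mu'}$ is witnessed by the identity map. Conversely, any isomorphism $\phi\colon V_\mu\to V_{\mu'}$ intertwines the diagonalizable operators $w_0$ on the two modules, hence sends eigenvectors to eigenvectors with the same eigenvalue and therefore induces a bijection between the two eigenvalue sets; thus $\{\ell-\mu\mid\ell\geq0\}=\{\ell-\mu'\mid\ell\geq0\}$ as subsets of $\FF$. Matching $-\mu$ with some $\ell_0-\mu'$ and $-\mu'$ with some $\ell_1-\mu$ gives $\mu'-\mu=\ell_0\in\ZZ_{\geq0}$ and $\mu-\mu'=\ell_1\in\ZZ_{\geq0}$, so $\ell_0+\ell_1=0$ and hence $\ell_0=\ell_1=0$, i.e.\ $\mu=\mu'$. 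This argument is independent of part (a) and also covers the cases in which $\mu$ or $\mu'$ vanishes.

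The main point requiring care — rather than a genuine obstacle — is the justification that a $\W$-submodule of $V_\mu$ is automatically a direct sum of $w_0$-weight spaces, which is exactly where $\chara(\FF)=0$ is needed, together with keeping precise track of which $w_m$ act by a nonzero scalar on a given monomial $\y^\ell$, so that the $\W$-orbit of a single $\y^{\ell_0}$ really does exhaust all of $\{\y^n\}_{n\geq0}$.
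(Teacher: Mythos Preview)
Your proof is correct and follows essentially the same approach as the paper's: both use the $w_0$-weight decomposition to reduce to monomials, then $w_{-1}$ to reach $\y^0$ and $w_m$ to generate everything. The only minor differences are that you make the Vandermonde step explicit (the paper hides it behind ``the usual argument''), and for part~(b) you match the full eigenvalue sets and force $\mu'-\mu,\mu-\mu'\in\ZZ_{\geq0}$, whereas the paper singles out $-\mu$ as the unique eigenvalue $\lambda$ with $\lambda-1$ not in the spectrum; both arguments are equivalent and equally short.
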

\begin{proof}
The proof is straightforward, so we just sketch it. First, if $\mu=0$ then $\FF\y^0$ is a submodule of $V_0$, so $V_0$ is reducible. Suppose now that $\mu\neq 0$. Let $X$ be a nonzero submodule of $V_\mu$. Since $w_{-1}^\ell. \y^\ell=\ell !\y^0$, it follows by the usual argument that $\y^0\in X$. Taking into account that $w_m.\y^0=-(m+1)\mu\y^m\in X$ for all $m\geq 0$ and $\mu\neq 0$, we deduce that $X=V_\mu$. Thus $V_\mu$ is irreducible and \ref{L:Gerst:char0:struct:Vmu:irred:1} is proved.

The action of $w_0$ on $V_\mu$ is diagonalizable with eigenvalues $\{\ell-\mu \}_{\ell\geq 0}$, with $-\mu$ being the unique eigenvalue such that $-\mu-1$ is no longer an eigenvalue. Thus the action of $\W$ on $V_\mu$ determines $\mu$, which proves \ref{L:Gerst:char0:struct:Vmu:irred:2}.
\end{proof}

It follows from the above that for all $0\leq i\leq m_h-1$ and all $j$ such that $\alpha_j\geq i+2$, the $\barr{\D}_j\otimes\W$-module $\barr{\D}_j[\y]\subseteq S_i$ is irreducible and it is isomorphic to $\barr{\D}_j\otimes V_{\mu_{ij}}$, where $\mu_{ij}=\frac{\alpha_j-i}{\alpha_j-1}\neq 0$. As the action depends on $i$, it is convenient to introduce $i$ into the notation for this module. Thus, we henceforth denote this module by $\barr{V}_{ij}$:
\begin{equation*}
\barr{V}_{ij}=\barr{\D}_j[\y]\subseteq S_i \quad \mbox{and} \quad \barr{V}_{ij}\cong \barr{\D}_j\otimes V_{\mu_{ij}}, 
\end{equation*}
for all $0\leq i\leq m_h-1$ and $j$ such that $\alpha_j\geq i+2$. Moreover, $\barr{\D}_q\otimes\W$ acts trivially on $\barr{V}_{ij}$ for $q\neq j$, so it follows by Theorem~\ref{T:Gerst:char0:recallBLO3} and \eqref{E:Gerst:char0:struct:ssdecomp} that $\barr{V}_{ij}$ is an irreducible $\hoch^1 (\A)$-submodule of $S_i$ on which both $\mathsf{Z}(\hoch^1(\A))$ and the nilpotent radical $\mathcal N$ of $[\hoch^1 (\A), \hoch^1 (\A)]$ act trivially. As a result of this analysis, we conclude that $S_i$ is a completely reducible $\hoch^1 (\A)$-module with semisimple decomposition (\textit{cf.}\ \eqref{E:Gerst:char0:struct:Si}):
\begin{equation}\label{E:Gerst:char0:struct:Si:ssdecomp}
S_i= \bigoplus_{\alpha_j\geq i+2} \barr{V}_{ij}.
\end{equation}

We summarize these results in the following, which constitutes the main result of this paper.

\begin{thm}\label{T:Gerst:char0:main}
Assume that $\chara(\FF)=0$ and $\A=\A_h$ for $0\neq h\in\FF[x]$. Let $h=\pr_1^{\alpha_1}\cdots \pr_t^{\alpha_t}$ be the decomposition of $h$ into irreducible factors with $0\leq k\leq t$ such that $\alpha_1, \ldots, \alpha_k\geq 2$ and $\alpha_{k+1}= \cdots = \alpha_t=1$. Since $\hoch^2(\A)\neq 0$ if and only if $k\geq 1$, we assume that $k\geq 1$ and set $m_h=\max\{ \alpha_j-1 \mid 1\leq j\leq k\}$. 

The structure of $\hoch^2(\A)$ as Lie module over the Lie algebra $\hoch^1(\A)$ under the Gerstenhaber bracket is as follows:
\begin{enumerate}[label=\textup{(\alph*)}]
\item There is a filtration of length $m_h$ by $\hoch^1(\A)$-submodules \label{T:Gerst:char0:main:1}
\begin{equation*}
\hoch^2(\A)=P_0\supsetneq P_1\supsetneq \cdots \supsetneq P_{m_h-1}\supsetneq P_{m_h}=0. 
\end{equation*} 
\item For each $0\leq i\leq m_h-1$ the factor module $S_i=P_i/P_{i+1}$ is completely reducible with semisimple decomposition
$S_i= \bigoplus_{\alpha_j\geq i+2} \barr{V}_{ij}$, where:\label{T:Gerst:char0:main:2}
\begin{enumerate}[label=\textup{(\roman*)}]
\item The nilpotent radical $\mathsf{Z}(\hoch^1(\A))\oplus \mathcal N$ of $\hoch^1 (\A)$ acts trivially on $S_i$, so $S_i$ becomes a $\left(\barr{\D}_1\otimes\W\right) \oplus\cdots\oplus \left(\barr{\D}_k\otimes\W\right)$-module, where $\barr{D}_j\cong \FF[x]/\pr_j\FF[x]$ and $\W= \spann_\FF\{w_i \mid i \geq -1\}$ is the Witt algebra. \label{T:Gerst:char0:main:21}
\item $\barr{V}_{ij}\cong \barr{\D}_j\otimes V_{\mu_{ij}}$, where $\mu_{ij}=\frac{\alpha_j-i}{\alpha_j-1}$ and the irreducible $\W$-module $V_{\mu}$ is described in \eqref{E:Gerst:char0:struct:Vmu:action}.\label{T:Gerst:char0:main:22}
\item $\barr{\D}_q\otimes\W$ acts trivially on $\barr{V}_{ij}$ for $q\neq j$ and $\barr{\D}_j\otimes\W$ acts on $\barr{V}_{ij}$ via \eqref{E:Gerst:char0:struct:Vmu:action}, under scalar extension.\label{T:Gerst:char0:main:23}
\item $\barr{V}_{ij}\cong \barr{V}_{i'j'}$ as $\hoch^1(\A)$-modules if and only if $(i, j)=(i', j')$.\label{T:Gerst:char0:main:24}
\end{enumerate}
\item $\hoch^2(\A)$ has finite composition length equal to $\sum_{j=1}^k(\alpha_j -1)$, the number of irreducible factors of $\gcd(h, h')$ counted with multiplicity; the compositions factors are $\{ \barr{V}_{ij} \mid 0\leq i\leq m_h-1, \alpha_j\geq i+2 \}$, representing distinct isomorphism classes.\label{T:Gerst:char0:main:3}
\item $\hoch^2(\A)$ is a semisimple $\hoch^1(\A)$-module if and only if $m_h\leq 1$, i.e., if and only if $h$ is not divisible by the cube of any non-constant polynomial.\label{T:Gerst:char0:main:4}
\end{enumerate}
\end{thm}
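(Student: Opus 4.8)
The plan is to assemble parts \ref{T:Gerst:char0:main:1}--\ref{T:Gerst:char0:main:3} from the material already developed in Section~\ref{S:Gerst:char0}, and to establish part \ref{T:Gerst:char0:main:4} directly; the bulk of the genuinely new work is the ``only if'' half of \ref{T:Gerst:char0:main:4}.

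For \ref{T:Gerst:char0:main:1}, the required filtration is exactly \eqref{E:Gerst:char0:struct:filtr}, produced in Lemma~\ref{L:Gerst:char0:struct:thi}\ref{L:Gerst:char0:struct:thi:3}. For \ref{T:Gerst:char0:main:2}, the semisimple decomposition $S_i=\bigoplus_{\alpha_j\geq i+2}\barr V_{ij}$ is \eqref{E:Gerst:char0:struct:Si:ssdecomp}; item \ref{T:Gerst:char0:main:21} combines \eqref{E:Gerst:char0:formula:actZ}, the vanishing $\gb{\mathcal N,S_i}=0$ recorded after \eqref{E:Gerst:char0:struct:actionSi}, and the Lie algebra isomorphism \eqref{E:Gerst:char0:struct:ssdecomp} coming from Theorem~\ref{T:Gerst:char0:recallBLO3}; items \ref{T:Gerst:char0:main:22} and \ref{T:Gerst:char0:main:23} are read off the discussion around \eqref{E:Gerst:char0:struct:Si:ss:action}, with Lemma~\ref{L:Gerst:char0:struct:Vmu:irred}\ref{L:Gerst:char0:struct:Vmu:irred:1} supplying irreducibility. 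For \ref{T:Gerst:char0:main:24}, I would argue that an isomorphism $\barr V_{ij}\cong\barr V_{i'j'}$ of $\hoch^1(\A)$-modules must preserve which of the simple ideals $\barr{\D}_q\otimes\W$ of $[\hoch^1(\A),\hoch^1(\A)]/\mathcal N$ acts nontrivially; by \ref{T:Gerst:char0:main:23} this ideal is the one with $q=j$ on the source (nontriviality holding since $w_{-1}.\y=\y^0\neq0$ in $V_{\mu_{ij}}$ by \eqref{E:Gerst:char0:struct:Vmu:action}) and with $q=j'$ on the target, forcing $j=j'$; restricting to $\barr{\D}_j\otimes\W$ and applying Lemma~\ref{L:Gerst:char0:struct:Vmu:irred}\ref{L:Gerst:char0:struct:Vmu:irred:2} over the field $\barr{\D}_j$ then gives $\mu_{ij}=\mu_{i'j}$, and injectivity of $i\mapsto\frac{\alpha_j-i}{\alpha_j-1}$ on $\{0,\dots,\alpha_j-2\}$ (we are in characteristic $0$) yields $i=i'$. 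Part \ref{T:Gerst:char0:main:3} is then bookkeeping: each $S_i$ is a finite direct sum of the irreducibles $\barr V_{ij}$, so \eqref{E:Gerst:char0:struct:filtr} refines to a composition series whose factors are precisely the $\barr V_{ij}$ with $0\le i\le m_h-1$ and $\alpha_j\ge i+2$, pairwise non-isomorphic by \ref{T:Gerst:char0:main:24}; for a fixed $j$ the admissible values of $i$ are $0,\dots,\alpha_j-2$, so the total count is $\sum_{j=1}^k(\alpha_j-1)$, the number of irreducible factors, with multiplicity, of $\gcd(h,h')=\pr_1^{\alpha_1-1}\cdots\pr_k^{\alpha_k-1}$.

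For \ref{T:Gerst:char0:main:4}, the ``if'' direction is immediate: under the standing hypothesis $k\geq1$ one has $m_h\geq1$, so $m_h\le1$ forces $m_h=1$, the filtration \eqref{E:Gerst:char0:struct:filtr} collapses to $\hoch^2(\A)=P_0\supsetneq P_1=0$, and $\hoch^2(\A)=S_0$ is semisimple by \ref{T:Gerst:char0:main:2}; moreover ``$h$ is not divisible by the cube of any non-constant polynomial'' is equivalent to $\alpha_j\le2$ for all $j$, i.e.\ to $m_h\le1$. For the ``only if'' direction I would use the nilpotent ideal $\mathcal N$ as an obstruction. If $\hoch^2(\A)$ were a semisimple $\hoch^1(\A)$-module, it would be a direct sum of irreducible submodules, each of which, being a composition factor, is isomorphic to some $\barr V_{ij}$ by \ref{T:Gerst:char0:main:3} and hence killed by $\mathcal N$; thus $\mathcal N$ would annihilate $\hoch^2(\A)$. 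It therefore suffices to exhibit, whenever $m_h\ge2$, a single element of $\mathcal N$ acting nontrivially. Using $\hoch^2(\A)=\D[\y]$ and formula \eqref{E:Gerst:char0:formula:actadgan} with $g=\Theta_1=\pr_1\cdots\pr_k$ and $n=1$ (so that $\ad_{\Theta_1 a_1}\in\mathcal N$ by Theorem~\ref{T:Gerst:char0:recallBLO3}\ref{T:Gerst:char0:recallBLO3:1}), one computes at once
\begin{equation*}
\gb{\ad_{\Theta_1 a_1},1}=-\Theta_1\frac{\pi_h h'}{h}\in\D,
\end{equation*}
so it remains to verify $\Theta_1\frac{\pi_h h'}{h}\notin\gcd(h,h')\FF[x]$. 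Choosing $j_0$ with $\alpha_{j_0}=m_h+1\ge3$ and writing $\frac{\pi_h h'}{h}=\sum_i\alpha_i\pr_i'\prod_{l\neq i}\pr_l$, the term with $i=j_0$ is coprime to $\pr_{j_0}$ (since $\pr_{j_0}$ is separable over a field of characteristic $0$, whence $\pr_{j_0}\nmid\pr_{j_0}'$), while every other term is divisible by $\pr_{j_0}$; hence $\pr_{j_0}\nmid\frac{\pi_h h'}{h}$, the $\pr_{j_0}$-adic valuation of $\Theta_1\frac{\pi_h h'}{h}$ equals $1<\alpha_{j_0}-1$, and this element is not divisible by $\gcd(h,h')$. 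Thus $\mathcal N\cdot\hoch^2(\A)\neq0$ and $\hoch^2(\A)$ is not semisimple.

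The main obstacle is precisely this ``only if'' half of part \ref{T:Gerst:char0:main:4}: all the other assertions amount to repackaging results already proved in Section~\ref{S:Gerst:char0}, whereas here one must produce an explicit witness of non-semisimplicity. The conceptual input is the equivalence between semisimplicity of $\hoch^2(\A)$ and triviality of the $\mathcal N$-action; the computational input is the evaluation of \eqref{E:Gerst:char0:formula:actadgan} on $(\ad_{\Theta_1 a_1},1)$ followed by the valuation estimate, and keeping the divisibility bookkeeping among $\Theta_1$, $\pi_h$, $\gcd(h,h')$ and the exponents $\alpha_j$ straight is the point that needs care.
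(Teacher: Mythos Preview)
Your proposal is correct and follows essentially the same approach as the paper: parts \ref{T:Gerst:char0:main:1}--\ref{T:Gerst:char0:main:3} are assembled from the results already established in Section~\ref{S:Gerst:char0}, and for \ref{T:Gerst:char0:main:4} both you and the paper witness non-semisimplicity by computing $\gb{\ad_{\pr_1\cdots\pr_k\,a_1},\y^0}=-\Theta_1\frac{\pi_h h'}{h}$ and checking via a $\pr_{j_0}$-adic valuation (the paper simply takes $j_0=1$ after reordering) that this does not lie in $\gcd(h,h')\FF[x]$. The only differences are cosmetic---you spell out the valuation bookkeeping and the composition-length count a bit more explicitly.
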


\begin{remark} It turns out that under the same conditions that ensure that $\hoch^2(\A)$ is semisimple, both
$\hoch^0(\A)$ and $\hoch^1(\A)$ are also semisimple $\hoch^1(\A)$-modules: since $\chara(\FF) = 0$, $\hoch^0(\A)= \FF$
is always simple and by \cite[Cor.\ 5.22\, (ii)]{BLO15ja}, $\hoch^1(\A_h)$ is a direct sum of its center---a sum of trivial modules---and simple Lie ideals.
\end{remark}

\begin{proof}
All of the above statements have been proved, except for \ref{T:Gerst:char0:main:24} and \ref{T:Gerst:char0:main:4}. We start with \ref{T:Gerst:char0:main:24}. If $\barr{V}_{ij}\cong \barr{V}_{i'j'}$ then $\barr{\D}_j\otimes\W$ acts non-trivially on $\barr{V}_{i'j'}$, so $j=j'$, by \ref{T:Gerst:char0:main:23}. Thus, by Lemma~\ref{L:Gerst:char0:struct:Vmu:irred}\ref{L:Gerst:char0:struct:Vmu:irred:2}, $\mu_{ij}=\mu_{i'j}$, which in turn implies $i=i'$. 

For the proof of \ref{T:Gerst:char0:main:4}, if $h$ is not divisible by the cube of any non-constant polynomial then $m_h=1$ and $\hoch^2(\A)=S_0$, which we have seen in \ref{T:Gerst:char0:main:2} is semisimple. Conversely, if $m_h\geq 2$ then there is some $i$ such that $\alpha_i\geq 3$, say $i=1$. By \eqref{E:Gerst:char0:formula:actadgan},
\begin{equation*}
\gb{\ad_{\pr_1 \cdots \pr_k a_1}, \y^0}=-\pr_1 \cdots \pr_k\sum_{i=1}^t \alpha_i\pr_1 \cdots \widehat \pr_i \cdots \pr_t \pr_i'\notin \gcd(h, h')\FF[x],
\end{equation*}
because $\pr_1^2$ divides $\gcd(h, h')$ but it does not divide $\gb{\ad_{\pr_1 \cdots \pr_k a_1}, \y^0}$.
But $\ad_{\pr_1 \cdots \pr_k a_1}\in\mathcal N$ and $\mathcal N$ annihilates all the composition factors of $\hoch^2(\A)$, by \ref{T:Gerst:char0:main:21}, so $\hoch^2(\A)$ cannot be semisimple in this case.
\end{proof}

Before we proceed to illustrate our result with some special cases, we first want to establish a connection between the representations $\barr{V}_{ij}$ and the Virasoro algebra. Recall that the Virasoro algebra is the unique (up to isomorphism) central extension of the full Witt algebra of derivations of $\FF[z^{\pm 1}]$. This Lie algebra is defined as $\mathsf{Vir}=\bigoplus_{i\in\ZZ}\FF .w_i \oplus \FF.c$, where
\begin{equation*}
[c, \mathsf{Vir}]=0 \quad\mbox{and}\quad  \gb{w_m, w_n}=(n-m)w_{m+n}+\delta_{m+n, 0}\frac{m^3-m}{12}c\quad \forall m, n\in\ZZ.
\end{equation*}
Define, for $\mu\in\FF$, the $\mathsf{Vir}$-module $U_\mu=\FF[\y^{\pm 1}]$ with action 
\begin{equation}\label{E:Gerst:char0:struct:Umu:action}
w_m . \y^\ell =(\ell-(m+1)\mu)\y^{m+\ell} \quad \mbox{and}\quad c.\y^\ell=0, \qquad \forall \ell, m\in\ZZ.
\end{equation}
The module $U_\mu$ is an intermediate series module (see \cite{oM92} for details).

The following can be readily checked by the reader:
\begin{enumerate}[label=\textup{(\alph*)}]
\item $\W$ is a Lie subalgebra of $\mathsf{Vir}$.
\item The formula \eqref{E:Gerst:char0:struct:Umu:action} gives a well-defined action of $\mathsf{Vir}$ on $U_\mu$.
\item $V_\mu\subseteq U_\mu$ as $\W$-modules.
\item $U_\mu$ is irreducible as a $\mathsf{Vir}$-module if and only if $\mu\neq0$ and $\mu\neq1$.
\item $U_\mu\cong U_{\mu'}$ as $\mathsf{Vir}$-modules if and only if $\mu=\mu'$.
\end{enumerate}

\subsection{Special cases}\label{SS:Gerst:char0:special:cases}

We end this section with a discussion of some examples of special interest. To avoid trivial cases, in all examples the polynomial $h$ is assumed to be divisible by the square of some non-constant polynomial. We continue to assume that $\chara(\FF)=0$.

\begin{exam}[$h=x^n$]
Let's consider the case where $h$ has a unique irreducible factor. For the sake of simplicity, we will assume that this factor is $x$, that is, $h=x^n$ with $n\ge 2$; the more general case of an irreducible factor of higher degree is entirely analogous. In this case: 
\begin{gather*}
\mathsf{Z}(\hoch^1(\A_{x^n})) = \FF D_{x^{n-1}}, \quad \mbox{where $D_{x^{n-1}}(x)=0$ and $D_{x^{n-1}}(\y)=x^{n-1}$,}\\
\mathcal N =\spann_\FF\{\ad_{x^i a_m} \mid  1\leq i\leq n-2, \ m \geq 0\},\\
[\hoch^1 (\A_{x^n}), \hoch^1 (\A_{x^n})]/\mathcal N \cong \W \ \mbox{(the Witt algebra)},\\
\hoch^2 (\A_{x^n})=\D[\y], \quad \mbox{where $\D=\displaystyle \left( \FF[x]/x^{n-1}\FF[x] \right)$}.
\end{gather*}

For $0\leq i\leq n-1$, let $P_i=x^i\D[\y]$, so that we get the following filtration of $\hoch^1 (\A_{x^n})$-submodules of $\hoch^2 (\A_{x^n})$
\begin{equation*}
\hoch^2(\A_{x^n})=P_0\supsetneq P_1\supsetneq \cdots \supsetneq P_{n-2}\supsetneq P_{n-1}=0. 
\end{equation*} 

Set $S_i=P_i/P_{i+1}\cong \FF[\y]$, for $i\leq n-2$. Then $D_{x^{n-1}}.\hoch^2(\A_{x^n})=0$ and $\mathcal N.P_i\subseteq P_{i+1}$, so $S_i$ is naturally a module for the Witt algebra $\W$, with action 
\begin{equation*}
w_m . \y^\ell =(\ell-(m+1)\frac{n-i}{n-1})\y^{m+\ell}, \qquad \mbox{for all $m\geq -1$ and $\ell\geq 0$.}
\end{equation*}
Thus, $S_i\cong V_{\frac{n-i}{n-1}}$ is simple and the composition factors $\left\{ S_i \right\}_{0\leq i\leq n-2}$ of $\hoch^2(\A_{x^n})$ are pairwise non isomorphic. In particular, $\hoch^2(\A_{x^n})$ has length $n-1$ as a $\hoch^1(\A_{x^n})$-module, with distinct composition factors.
\end{exam}

The next example, a particular case of the previous one, focuses on the Jordan plane.

\begin{exam}[The Jordan plane]
Taking $h=x^2$, we obtain the algebra $\A_{x^2}$, known as the Jordan plane, with homogeneous defining relation $\y x = x\y +x^2$. The description here is:
\begin{equation*}
\hoch^1 (\A_{x^2})=\FF D_{x} \oplus \W \quad \mbox{and}\quad 
\hoch^2 (\A_{x^2})=\FF[\y],
\end{equation*}
where $D_{x}(x)=0$, $D_{x}(\y)=x$ and $\W$ is the Witt algebra.

It follows that $\hoch^2 (\A_{x^2})$ is a simple $\hoch^1 (\A_{x^2})$-module annihilated by $D_{x}$ and such that, as a $\W$-module, $\hoch^2 (\A_{x^2})\cong V_{2}$.

The Lie subalgebra $\FF w_{-1}\oplus\FF w_{0}\oplus\FF w_{1}\subseteq\W$ is isomorphic to $\mathfrak{sl}_2(\FF)$, under the identification $e=w_{-1}$, $h=-2w_0$, $f=-w_1$, where $e=E_{12}$, $f=E_{21}$ and $h=[e, f]$ are the canonical generators of $\mathfrak{sl}_2(\FF)$. The restriction of the $\hoch^1 (\A_{x^2})$-module structure of
$\hoch^2 (\A_{x^2})=\FF[\y]$ to $\mathfrak{sl}_2(\FF)$ is determined by the relations
\begin{equation*}
e.\y^\ell=\ell\y^{\ell-1}, \quad h.\y^\ell=(4-2\ell)\y^\ell, \quad f.\y^\ell=(4-\ell)\y^{\ell+1}, \quad \forall \ell\geq 0. 
\end{equation*}
Whence, it is easy to see that $L(4):=\FF \y^0\oplus\FF \y^1\oplus\FF \y^2\oplus\FF \y^3\oplus\FF \y^4$ is a simple $\mathfrak{sl}_2(\FF)$-submodule of $\hoch^2 (\A_{x^2})$. In fact, $L(4)$ is the simple $\mathfrak{sl}_2(\FF)$-module of highest weight $4$ and the quotient module $\hoch^2 (\A_{x^2})/L(4)\cong M(-6)$ is the irreducible Verma module of highest weight $-6$.

\end{exam}

Our last example deals with the case where $\hoch^2(\A)$ is a semisimple Lie module.

\begin{exam}[$h$ is cube free]
By Theorems \ref{T:Gerst:char0:recallBLO3} and \ref{T:Gerst:char0:main}\,\ref{T:Gerst:char0:main:4}, the following conditions are equivalent:
\begin{itemize}
\item $\hoch^2(\A)$ is a semisimple $\hoch^1(\A)$-module;
\item $\mathcal N=0$;
\item $\hoch^1(\A)$ is a reductive Lie algebra;
\item $h$ is cube free.
\end{itemize}
Here we study the case in which these conditions hold, so the decomposition of $h$ into irreducible factors is of the form $h=\pr_1^2\cdots \pr_k^2\pr_{k+1}\cdots \pr_{t}$, for some $1\leq k\leq t$.
We have
\begin{gather*}
\dim_\FF\mathsf{Z}(\hoch^1(\A))=\degg \pr_1\cdots\pr_{t},\\
\hoch^1 (\A)= \mathsf{Z}(\hoch^1(\A)) \oplus (\barr{D}_1\otimes\W)\oplus \cdots \oplus (\barr{D}_k\otimes\W),\\
\hoch^2 (\A)=\barr{D}_1[\y]\oplus\cdots\oplus\barr{D}_k[\y],
\end{gather*}
where $\barr{D}_j\cong \FF[x]/\pr_j\FF[x]$ and $\W$ is the Witt algebra.

Then, $\mathsf{Z}(\hoch^1(\A))$ acts trivially on $\hoch^2 (\A)$ and $\barr{D}_i\otimes\W$ acts trivially on $\barr{D}_j[\y]$, if $i\neq j$. As a $\barr{D}_j\otimes\W$-module, $\barr{D}_j[\y]\cong \barr{D}_j\otimes V_2$. Thus the irreducible summands of $\hoch^2 (\A)$ are $\left\{\barr{D}_j[\y]\right\}_{1\leq j\leq k}$, they are pairwise non-isomorphic as $\hoch^1 (\A)$-modules and the composition length of $\hoch^2 (\A)$ is $k$.
\end{exam}


\def\cprime{$'$} \def\cprime{$'$} \def\cprime{$'$}


\begin{thebibliography}{1}

\bibitem{BLO15ja}
Georgia Benkart, Samuel~A. Lopes, and Matthew Ondrus.
\newblock Derivations of a parametric family of subalgebras of the {W}eyl
  algebra.
\newblock {\em J. Algebra}, 424:46--97, 2015.

\bibitem{BLO15tams}
Georgia Benkart, Samuel~A. Lopes, and Matthew Ondrus.
\newblock A parametric family of subalgebras of the {W}eyl algebra {I}.
  {S}tructure and automorphisms.
\newblock {\em Trans. Amer. Math. Soc.}, 367(3):1993--2021, 2015.

\bibitem{BKL}
Dave J. Benson, Radha Kessar, and Markus Linckelmann.
\newblock On blocks of defect two and one simple module, and Lie algebra structure of
$HH^1$. 
\newblock {\em J. Pure Appl. Algebra}, 221:2953--2973, 2017.

\bibitem{CS15}
Sergio Chouhy and Andrea Solotar.
\newblock Projective resolutions of associative algebras and ambiguities.
\newblock {\em J. Algebra}, 432:22--61, 2015.

\bibitem{mG63}
Murray Gerstenhaber.
\newblock The cohomology structure of an associative ring.
\newblock {\em Ann. of Math. (2)}, 78:267--288, 1963.

\bibitem{mG64}
Murray Gerstenhaber.
\newblock On the deformation of rings and algebras.
\newblock {\em Ann. of Math. (2)}, 79:59--103, 1964.

\bibitem{LLW14}
Liyu Liu, Shengqiang Wang, and Quanshui Wu.
\newblock Twisted {C}alabi-{Y}au property of {O}re extensions.
\newblock {\em J. Noncommut. Geom.}, 8(2):587--609, 2014.

\bibitem{oM92}
Olivier Mathieu.
\newblock Classification of {H}arish-{C}handra modules over the {V}irasoro
  {L}ie algebra.
\newblock {\em Invent. Math.}, 107(2):225--234, 1992.

\bibitem{S}
Selene S\'anchez-Flores.
\newblock The Lie structure on the Hochschild cohomology of a modular group algebra.
\newblock {\em J. Pure Appl. Algebra}, 216:718--733, 2012.

\bibitem{rS61}
R.~Sridharan.
\newblock Filtered algebras and representations of {L}ie algebras.
\newblock {\em Trans. Amer. Math. Soc.}, 100:530--550, 1961.

\bibitem{dS96}
D.~R. Stephenson.
\newblock Artin-{S}chelter regular algebras of global dimension three.
\newblock {\em J. Algebra}, 183(1):55--73, 1996.

\bibitem{mSA17}
Mariano Su\'arez-\'Alvarez.
\newblock A little bit of extra functoriality for {E}xt and the computation of
  the {G}erstenhaber bracket.
\newblock {\em J. Pure Appl. Algebra}, 221(8):1981--1998, 2017.

\end{thebibliography}
\end{document}